\newtheorem{theorem}{Theorem}[section] 
\newtheorem{lemma}[theorem]{Lemma}
\newtheorem{proposition}[theorem]{Proposition}
\newtheorem{problem}[theorem]{Problem}
\numberwithin{equation}{section}
\def\1{{\mathbf 1}}
\def\build#1_#2^#3{\mathrel{\mathop{\kern 0pt#1}\limits_{#2}^{#3}}}
\def\epsilon{{\varepsilon}}
\def\phi{{\varphi}}
\def\C{{\mathbb C}}
\def\R{{\mathbb R}}
\def\Z{{\mathbb Z}}
\def\ZN{{\Z^{N}_{\, \downarrow}}}
\def\sgn{{\rm sgn}}
\def\tr{{\rm tr}}
\def\Tr{{\rm Tr}}
\def\U{{\rm U}}
\def\UC{{\mathbb U}}
\def\inv{{\rm inv}}
\def\L{\mathcal L}
\def\E{{\mathbb E}}
\def\diag{{\rm diag}}
\def\H{{\mathcal H}}
\def\I{{\mathcal I}}
\def\J{{\mathcal J}}
\def\Prob{{\mathcal M}}
\def\Supp{{\rm{Supp}}}
\def\PVint{{\rm PV}\!\! \int}
\def\MS{{\sf MS}}
\def\Up#1{{U^{#1}}}
\def\Gp#1{{G^{#1}}}
\def\Gpp#1{{G^{#1}_{+}}}
\def\Gpm#1{{G^{#1}_{-}}}
\def\Gppm#1{{G^{#1}_{\pm}}}
\def\Leb{{\rm Leb}}
\def\epsilon{{\varepsilon}}
\def\phi{{\varphi}}
\title[On The Douglas-Kazakov phase transition]{On The Douglas-Kazakov phase transition\\[4pt]
{\scriptsize Weighted potential theory under constraint for probabilists 
}} 
\author{Thierry L\'evy} \thanks{\noindent UPMC, Universit\'e Pierre et Marie Curie - LPMA - Case courrier 188 - 4, place Jussieu - 75252 Paris Cedex 05. Email : thierry.levy@upmc.fr }
\author{Myl\`ene Ma\"\i da} \thanks{Universit\'e Lille 1 - Laboratoire Paul Painlev\'e - Cit\'e Scientifique - 59655 Villeneuve d'ascq Cedex \\ Email : mylene.maida@math.univ-lille1.fr\\
This work was supported in part by the Labex CEMPI  (ANR-11-LABX-0007-01)}
\begin{document}
\maketitle

\begin{abstract} We give a rigorous proof of the fact that a phase transition discovered by Douglas and Kazakov in 1993 in the context of two-dimensional gauge theories occurs. This phase transition can be formulated in terms of the Brownian bridge on the unitary group $\U(N)$ when $N$ tends to infinity. We explain how it can be understood by considering the asymptotic behaviour of the eigenvalues of the unitary Brownian bridge, and how it can be technically approached by means of Fourier analysis on the unitary group. Moreover, we advertise some more or less classical methods for solving certain minimisation problems which play a fundamental role in the study of the phase transition. 
\end{abstract}

{\small \tableofcontents}

\section{Introduction} In 1993, Douglas and Kazakov discovered that the Euclidean $\U(N)$ Yang-Mills theory on the two-dimensional sphere exhibits a third order phase transition in the limit where $N$ tends to infinity (see \cite{DouKaz93}). More precisely, they discovered that the free energy of this model is not a smooth function of the total area of the sphere. Even more specifically, they found that the third derivative of the free energy has a jump discontinuity when the total area of the sphere crosses the value $\pi^{2}$.

It turns out that the partition function of the $\U(N)$ Yang-Mills theory on the two-dimensional sphere is exactly the same as that of the Brownian bridge on $\U(N)$, in a sense which we will explain below. In this correspondence, the area of the sphere becomes the lifetime of the bridge. The result of Douglas and Kazakov is thus equivalent to the existence of a phase transition concerning the limit as $N$ tends to infinity of the Brownian bridge on the unitary group $\U(N)$. In this limit, a certain quantity has a jump discontinuity when the lifetime of the bridge crosses the critical value $\pi^{2}$. This is the point of view which we adopt in this paper.

Let us give a precise statement of the main result. For all integer $N\geq 1$, let $\U(N)$ denote the group of unitary $N\times N$ matrices:
\[\U(N)=\{U\in M_{N}(\C) : UU^{*}=I_{N}\}.\]
Let us also denote by $\H_{N}$ the space of Hermitian $N\times N$ matrices:
\begin{equation}\label{HN}
\H_{N}=\{X \in M_{N}(\C) : X=X^{*}\}.
\end{equation}
Let us endow the real vector space $\H_{N}$ with the scalar product
\begin{equation}\label{prodscal}
\langle X,Y\rangle =N\Tr(XY).
\end{equation}
Let $(X_{t})_{t\geq 0}$ be the corresponding linear Brownian motion on $\H_{N}$. Let $(U_{t})_{t\geq 0}$ be the solution of the linear stochastic differential equation
\[dU_{t}=iU_{t}dX_{t}-\frac{1}{2}U_{t}\; dt\]
 in $M_{N}(\C)$, with initial condition $U_{0}=I_{N}$. The process $(U_{t})_{t\geq 0}$ is the Brownian motion on the unitary group $\U(N)$, issued from the identity. 

For all $t>0$, the distribution of $U_{t}$ admits a smooth positive density with respect to the Haar probability measure on $\U(N)$. We denote this density by $p_{N,t}$. The main result is the following.

\begin{theorem} \label{PT3} For all $T\geq 0$, the limit
\begin{equation}\label{defF}
F(T)=\lim_{N\to \infty} \frac{1}{N^{2}} \log p_{N,T}(I_{N})
\end{equation}
exists. The function $F:\R^{*}_{+}\to \R$ is $C^{2}$ on $\R^{*}_{+}$ and $C^{\infty}$ on $\R^{*}_{+}\setminus\{\pi^{2}\}$. Moreover, the third derivative of $F$ admits a jump discontinuity at the point $\pi^{2}$. More specifically, one has 
\[\lim_{\substack{T\to \pi^{2}\\ T<\pi^{2}}}F^{(3)}(T)=-\frac{1}{\pi^{6}} \ \mbox{ and } \lim_{\substack{T\to \pi^{2}\\ T>\pi^{2}}}F^{(3)}(T)=-\frac{3}{\pi^{6}}.\]
\end{theorem}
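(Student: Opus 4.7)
The plan is to combine harmonic analysis on $\U(N)$ with a Laplace/large--deviation analysis of the resulting lattice sum, reducing Theorem~\ref{PT3} to a constrained equilibrium problem in potential theory. The phase transition will arise from the time at which an upper-bound constraint on the density becomes active, and the third-order nature of the singularity will be read off explicit formulas on each side of $\pi^{2}$.

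First I would expand $p_{N,T}(I_{N})$ using the Peter--Weyl theorem and the diagonalisation of the heat semigroup on $\U(N)$:
\[
p_{N,T}(I_{N})=\sum_{\lambda}(\dim V_{\lambda})^{2}\,e^{-\frac{T}{2N}c_{2}(\lambda)},
\]
where $\lambda=(\lambda_{1}\geq\dots\geq\lambda_{N})\in\Z^{N}$ runs over highest weights of $\U(N)$, $\dim V_{\lambda}$ is given by Weyl's dimension formula and $c_{2}(\lambda)$ is the quadratic Casimir. The shift $\ell_{i}=\lambda_{i}+\tfrac{N+1}{2}-i$ converts this into a sum over strictly decreasing tuples of (half-)integers, weighted by a squared Vandermonde $\prod_{i<j}(\ell_{i}-\ell_{j})^{2}$ and a Gaussian factor $\exp(-\tfrac{T}{2N}\sum\ell_{i}^{2})$, up to explicit constants.

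Second, setting $x_{i}=\ell_{i}/N$ and $\mu_{N}=\tfrac{1}{N}\sum_{i}\delta_{x_{i}}$, a direct computation gives
\[
\tfrac{1}{N^{2}}\log\!\bigl[(\dim V_{\lambda})^{2}\,e^{-\frac{T}{2N}c_{2}(\lambda)}\bigr]=-I_{T}(\mu_{N})+c+o(1),\qquad I_{T}(\mu)=\tfrac{T}{2}\!\int\! x^{2}\,d\mu(x)-\iint\log|x-y|\,d\mu(x)d\mu(y).
\]
Because the $\ell_{i}$ are distinct, any subsequential weak limit of $\mu_{N}$ is absolutely continuous with density $\leq 1$. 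A standard Laplace argument (upper bound via Gaussian control of $\int x^{2}d\mu$; lower bound by quantising a target admissible density into a tuple $(\ell_{1},\dots,\ell_{N})$ of distinct (half-)integers) would then yield the existence of the limit \eqref{defF} together with
\[
F(T)=-\min\!\bigl\{I_{T}(\mu):\mu\in\P(\R),\ \tfrac{d\mu}{d\Leb}\leq 1\bigr\}+c.
\]

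Finally I would solve this constrained equilibrium problem. The unconstrained minimiser of $I_{T}$ is the semicircle law of radius $2/\sqrt{T}$, whose maximal density $\sqrt{T}/\pi$ equals $1$ exactly at $T=\pi^{2}$. For $T\leq\pi^{2}$ the constraint is inactive, the minimiser is the semicircle and $F(T)$ is a classical smooth explicit function. For $T>\pi^{2}$ the minimiser saturates on a symmetric interval and takes the form $d\mu^{*}=\mathbf{1}_{[-a,a]}\,dx+\rho_{T}(x)dx$ with $\rho_{T}$ supported on $[-b,-a]\cup[a,b]$; the pair $(a,\rho_{T})$ is characterised by an Euler--Lagrange equation carrying two Lagrange multipliers, one for total mass and one for the saturation set, which via Tricomi-type resolvent inversion on the unsaturated part yields closed expressions (in terms of elliptic integrals) for $a(T)$, $b(T)$, $\rho_{T}$ and $I_{T}(\mu^{*})$. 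Differentiating three times on each side of $\pi^{2}$, while tracking $a(T)\downarrow 0$ as $T\downarrow\pi^{2}$, would then give the $C^{2}$ regularity at $\pi^{2}$ and the two announced limits of $F^{(3)}$. The main obstacle is precisely this last step: producing formulas clean enough to permit three honest differentiations, and identifying the cancellations that make $F''$ continuous at $\pi^{2}$ but force $F^{(3)}$ to jump. This is the ``weighted potential theory under constraint'' advertised in the subtitle, and it is what ultimately makes the Douglas--Kazakov transition third order rather than first or second.
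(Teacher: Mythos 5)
Your proposal is correct and follows essentially the same route as the paper: Fourier/Peter--Weyl expansion of the heat kernel at the identity, rescaling of the highest weights to an empirical measure, a large-deviation (Laplace) reduction to the constrained minimisation of $\I_{Q_T}$ over $\L(\R)$, identification of $\pi^2$ as the value of $T$ at which the semicircle's peak density reaches~$1$, and elliptic-integral formulas for the supercritical minimiser followed by three explicit differentiations. The only place where the paper has more to say than your sketch is the determination of the supercritical support $(\alpha,\beta)$: you fold this into the Euler--Lagrange system with two multipliers, whereas the paper first proves the qualitative shape of the support (a saturation interval $[-\alpha,\alpha]$ inside $[-\beta,\beta]$) via a folding-by-squaring trick, a Kuijlaars--McLaughlin duality between $\mu^*_T$ and $\Leb-\mu^*_T$, and convexity results of Dragnev--Saff, and then pins down $\alpha,\beta$ by minimising the Mhaskar--Saff functional; this pre-determination of the support is what makes the subsequent resolvent inversion and the verification of the Euler--Lagrange conditions manageable.
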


\begin{figure}[h!]
\begin{center}
\includegraphics[width=6.5cm]{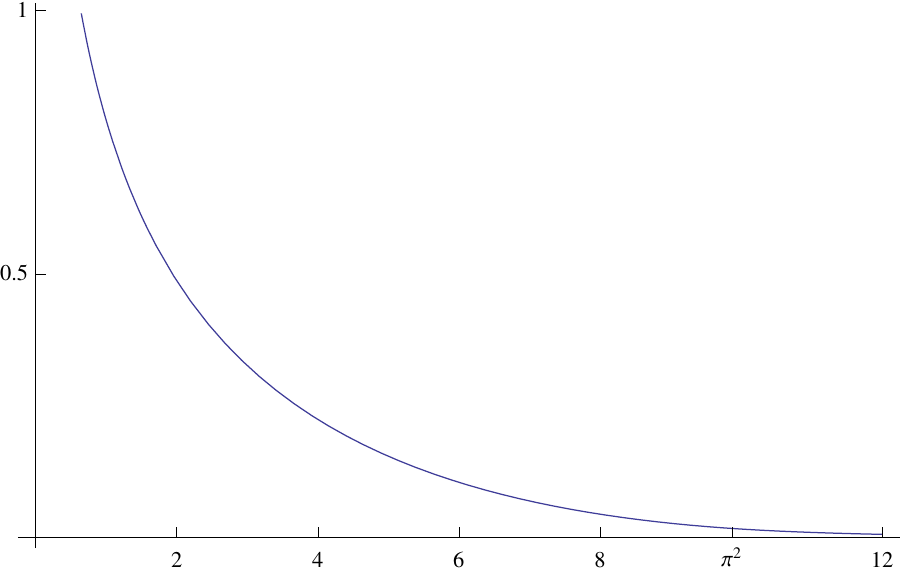} \hspace{0.3cm}  \raisebox{4mm}{\includegraphics[width=6.5cm]{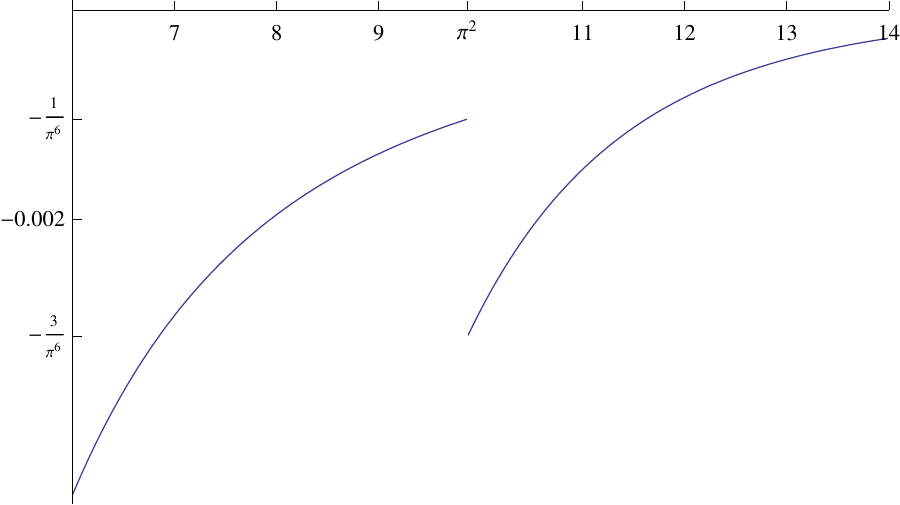}}
\caption{The function $T\mapsto F(T)$ (left) and its third derivative near the critical point $\pi^{2}$ (right).}
\end{center}
\end{figure}

In this paper, we give a complete proof of this theorem, based on the expression of the partition function as a series in Fourier space, that is, as a sum over Young diagrams. There is an alternative approach to this result, which focuses on the behaviour of the eigenvalues of the Brownian bridge (which we did not yet define). This point of view was taken in a previous work by Liechty and Wang \cite{LiechtyWang} and it has the advantage of making it plausible, at a heuristic level, that there is a phase transition. We will discuss it rather informally, to the extent that it allows one to better apprehend the nature of the phase transition. The work of Liechty and Wang, on the other hand, is technically arduous, and logically independent from ours. 

We also want to mention that several other third order phase transitions have been discovered over the last twenty years by physicists. In some recent works, Schehr and Majumdar draw a general picture and establish several general properties of these third order transitions. We refer the interested reader in particular to \cite{SchMaj13} and the many references therein.

Besides the desire to understand Theorem \ref{PT3}, one of our main motivations in writing this paper was to explore the concrete resolution of the minimisation problem which is at the heart of the study of the phase transition. In this problem, a probability measure on the real line is sought which minimises the sum of its energy of logarithmic self-interaction and its potential energy in an external field. This probability measure is moreover subject to a constraint, in our case that its density with respect to the Lebesgue measure should exist and not exceed $1$. 

The resolution of such minimisation problems seems to share some characteristics with the resolution of ordinary differential equations, where more or less undisclosable manipulations are performed in the secrecy of one's draft in order to identify the solution, which is then abruptly exhibited and proved to be the right one, using perfectly rigorous arguments which unfortunately give no hint as to the way in which the solution was found in the first place. 

In the present paper, we decided to disclose some of these shameful manipulations, with the hope that they can be fruitfully applied to other similar problems. The method which we explain is not at all new, we learned it in the treatise of Gakhov \cite{Gakhov}, and it is probably well known by many people who are better experts of these problems than we are. But Gakhov's book is written in a rather old-fashioned language and we did not find many other accounts of this method: we are thus trying to write the account of it which we would have often needed ourselves since we are studying problems of this kind. {\sl Sections \ref{sec:findmin} and \ref{sec:findweightedmin} are devoted to these formal derivations and we used a specific typography to emphasize that they do not have the same mathematical status as the rest of the paper.}

We will also try to illustrate the power of a number of tools and methods which we learned in the books and papers of Saff, Totik, Dragnev, Mhaskar, Kuijlaars, McLaughlin and others \cite{SafTot97, DraSaf97, MhaSaf92, KuijlaarsMcLaughlin, DraSaf00} in order to solve these minimisation problems. In particular, we have not shied away from quoting several results which we found particularly useful, and likely to be of  interest for the potential reader. To be brief, we have treated Theorem \ref{PT3} not only as an interesting result in itself, but also a case study in the art of solving minimisation problems under constraints.

Particle systems of the kind that will be studied in this paper are commonly referred to as ``Coulomb gases'' in the physical and mathematical literature. These systems have recently received much attention, as attested for example by the works of Sandier, Serfaty and their coauthors (see \cite{Ser} for a review), and also, closer to our point view, by the work of Chafa\"\i, Gozlan and Zitt in \cite{ChaGozZit14}, based on large deviations. In these works, very general results about the existence of the minimisers and their properties are stated, but only few minimisation problems are explicitly solvable, and even less when a constraint is added. In contrast, in this paper, we explore very concretely and as thoroughly as possible one very concrete example. \\

{\bf Acknowledgements.} We would like to acknowledge the stimulating effect which the recent work of Liechty and Wang \cite{LiechtyWang} had on us. In this work, we learned among many other things that it is actually possible to solve rigorously the minimisation problem which is the main object of this paper. We also would like to thank Bernhard Beckermann for his decisive help in the proof of Lemma \ref{beckermann}.

\section{Gaussian Wigner matrices}\label{Wigner}

It will be useful for us, as a warm-up, for the sake of coherence, and in order to set up some notation, to review the  case of Gaussian Wigner matrices in the light of the kind of minimisation problem which will appear
in the study of the phase transition. For this now very classical problem, we refer mainly to the monograph\cite{AndGuiZei10}.

\subsection{The Gaussian integral}\label{sec:GI}
For all real $t>0$, the Euclidean space $(\H_{N},\frac{1}{t} \langle \cdot ,\cdot\rangle)$ defined thanks to \eqref{HN} and \eqref{prodscal} carries a Gaussian probability measure which we shall denote by 
$\gamma_{N,t}$:
\[d\gamma_{N,t}(X)\propto e^{-\frac{N}{2t}\Tr(X^{2})} \; dX.\]
Consider a function $f:\H_{N}\to \R$ which is continuous, bounded and invariant by unitary conjugation in the sense that for all $X\in \H_{N}$ and all $U\in \U(N)$, one has $f(UXU^{-1})=f(X)$. 
Weyl's integration formula in this context (see section 2.5.4 in \cite{AndGuiZei10}) asserts that 
\[\int_{\H_{N}} f d\gamma_{N,t}=\frac{\left(\frac{N}{t}\right)^{\frac{N^{2}}{2}}}{1! \ldots N!} \int_{\R^{N}} f(\diag(\lambda_{1},\ldots,\lambda_{N})) V(\lambda_{1},\ldots,\lambda_{N})^{2} e^{-\frac{N}{2t}(\lambda_{1}^{2}+\ldots+\lambda_{N}^{2})} \; \frac{d\lambda_{1} \ldots d\lambda_{N}}{\sqrt{2\pi}^{N}},\]
where $V(\lambda_{1},\ldots,\lambda_{N})=\prod_{1\leq i<j\leq N} (\lambda_{i}-\lambda_{j})$ is the Vandermonde determinant. 
The constant in front of the integral can be determined by taking $f$ identically equal to $1$ and by writing $V(\lambda_{1},\ldots,\lambda_{N})$ as a determinant of monic Hermite polynomials. 

Introducing the empirical spectral measure
\[\hat \mu_{\lambda}=\frac{1}{N}\sum_{i=1}^{N} \delta_{\lambda_{i}},\]
the last integral can be rewritten as
\begin{equation}\label{integraleWigner}
\frac{\left(\frac{N}{t}\right)^{\frac{N^{2}}{2}}}{1! \ldots N!} \int_{\R^{N}} f(\diag(\lambda_{1},\ldots,\lambda_{N})) e^{{-N^{2} \J^{t}(\hat\mu_{\lambda})}}\; \frac{d\lambda_1\ldots d\lambda_N}{\sqrt{2\pi}^{N}},
\end{equation}
provided we define, for every Borel probability measure $\mu$ on $\R$ with compact support, and for every real $t>0$, 
\begin{equation}\label{defJ}
\J^{t}(\mu)=\iint_{\{(x,y)\in \R^{2} : x\neq y\}} -\log |x-y| \; d\mu(x)d\mu(y) + \frac{1}{2t} \int_{\R} x^{2} d\mu(x).
\end{equation}
One of the keys to the intuitive interpretation of $\J^{t}$ and the numerous similar functionals which we will encounter in this paper is the fact that the function $(z,w)\mapsto -\log |z-w|$ is the fundamental solution of the operator $-\frac{1}{2\pi}\Delta$ on the complex plane. Accordingly, the number $\J^{t}(\mu)$ can be understood as twice the electrostatic potential energy of a unit of electric charge distributed according to the measure $\mu$ 
in the complex plane, confined in this case to the real line, and subject to the external potential $x\mapsto \frac{x^{2}}{4t}$. One should however observe that $\J^{t}$ does not take into account the 
infinite energy of self-interaction of an atom of charge with itself.

\subsection{A large deviation principle} From \eqref{integraleWigner}, one naturally expects that, when $N$ tends to infinity, the eigenvalues of a typical Gaussian Hermitian matrix will be distributed 
so as to make $\J^{t}(\hat\mu_{\lambda})$ as small as possible. More precisely, since, as it will turn out, the functional $\J^{t}$ achieves its minimum at a unique diffuse probability measure, which we will denote by $\sigma_t,$ one expects that the probability for the empirical spectral measure $\hat \mu_\lambda$ to be far away from $\sigma_t$ will be exponentially small.
This was rigorously stated and proved by Ben Arous and Guionnet in \cite{BenArousGuionnet} in the framework of large deviations, as follows.

Let us denote by $\Prob(\R)$ the set of Borel probability measures on $\R$, endowed with the topology of weak convergence and with the corresponding Borel $\sigma$-field.

For each $N\geq 1$, let us denote by $\Lambda_{N,t}$ the distribution of the random empirical spectral measure $\hat\mu_{\lambda}$ of a Hermitian $N\times N$ matrix chosen under the probability measure $\gamma_{N,t}$. 

\begin{theorem}{\cite[Thm 1.3]{BenArousGuionnet}}\label{Alice} The sequence $(\Lambda_{N,t})_{N\geq 1}$ of probability measures on $\Prob(\R)$ satisfies a large deviation principle in the scale $N^{2}$ and with good rate function 
\[ \mu \mapsto \I^{t}(\mu)+\frac{1}{2}\log t-\frac{3}{4},\]
with 
\begin{equation}\label{defItWig}
\I^{t}(\mu)=\iint_{ \R^{2}} -\log |x-y| \; d\mu(x)d\mu(y) + \frac{1}{2t} \int_{\R} x^{2} d\mu(x).
\end{equation}
\end{theorem}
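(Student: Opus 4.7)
The plan is to carry out the standard Laplace-type analysis of the joint eigenvalue density supplied by Weyl's formula \eqref{integraleWigner}, which up to the explicit constant $c_{N,t} = \frac{(N/t)^{N^{2}/2}}{1!\cdots N!\,(2\pi)^{N/2}}$ writes
\[d\Lambda_{N,t}(\hat\mu_{\lambda}) = c_{N,t}\, e^{-N^{2} \J^{t}(\hat\mu_{\lambda})} \, d\lambda_{1}\cdots d\lambda_{N},\]
so that the LDP for $\Lambda_{N,t}$ reduces to comparing $\J^{t}(\hat\mu_{\lambda})$ with $\I^{t}(\mu)$ on weak neighbourhoods of any target $\mu$, together with controlling $c_{N,t}$ and the partition function. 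Exponential tightness at speed $N^{2}$ follows from the quadratic confinement: after a crude upper bound on the Vandermonde by $\prod_i(1+\lambda_i^{2})^{N}$, the event $\{\int x^{2}\, d\hat\mu_{\lambda} \geq M\}$ has probability at most $e^{-c N^{2} M}$ for some $c>0$ once $M$ is large, so it suffices to establish ball estimates around fixed $\mu\in\Prob(\R)$.

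For the lower bound, I would fix $\mu$ with $\I^{t}(\mu)<\infty$; then $\mu$ has no atom and finite logarithmic energy. Introduce the classical locations $\lambda_i^{*}$ defined by $\mu((-\infty,\lambda_i^{*}])=i/N$, and work on a small product box $\prod_i [\lambda_i^{*},\lambda_i^{*}+\delta_i/N]$ with $\delta_i$ tuned to a modulus of continuity of $\mu$. On this box, the Vandermonde and quadratic contributions are uniformly approximated by $-N^{2}\I^{t}(\mu)$, using crucially that the non-atomicity of $\mu$ renders the diagonal $\mu\otimes\mu$-negligible, and the box has volume of order $e^{o(N^{2})}$. Together with the partition function asymptotics this yields
\[\liminf_{N\to\infty} \frac{1}{N^{2}} \log \Lambda_{N,t}(O) \geq -\I^{t}(\mu) + \inf_{\nu}\I^{t}(\nu)\]
for every weak-open $O\ni\mu$.

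The upper bound is the crux. The functional $\J^{t}$ is finite on the atomic empirical measure while $\I^{t}(\hat\mu_{\lambda})=+\infty$, so a direct continuity argument cannot succeed. The remedy is to truncate the logarithm: for $M>0$ set $\phi_M(x,y)=\min\{-\log|x-y|,M\}$, which is bounded, continuous and symmetric on $\R^{2}$, and introduce
\[\J^{t}_{M}(\mu)=\iint \phi_M(x,y)\, d\mu(x)d\mu(y)+\frac{1}{2t}\int x^{2}d\mu(x).\]
The map $\mu\mapsto\J^{t}_{M}(\mu)$ is weakly continuous; a simple pairing argument gives $N^{2}\J^{t}(\hat\mu_{\lambda})\geq N^{2}\J^{t}_{M}(\hat\mu_{\lambda})-NM$, an $o(N^{2})$ correction. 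A standard continuity-based estimate on closed $F\subset\Prob(\R)$ then gives
\[\limsup_{N\to\infty}\frac{1}{N^{2}}\log\Lambda_{N,t}(F) \leq -\inf_{\mu\in F}\J^{t}_{M}(\mu) + \inf_{\nu}\I^{t}(\nu),\]
after which a monotone limit $M\to\infty$, combined with monotone convergence on the positive part $\max(-\log|x-y|,0)$ and dominated control on the negative part (where the quadratic confinement intervenes), replaces $\J^{t}_{M}$ by $\I^{t}$ and completes the upper bound.

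It remains to analyse $\I^{t}$ itself. The Fourier representation $-\log|x-y|=\int_{0}^{\infty} (e^{-s}-e^{-s|x-y|})/s\, ds$ writes the logarithmic energy as a manifestly positive quadratic form, giving strict convexity of $\I^{t}$ on $\Prob(\R)$, lower semi-continuity, and uniqueness of the minimiser, while $\frac{1}{2t}\int x^{2} d\mu$ makes level sets compact. A direct Euler-Lagrange calculation identifies this minimiser $\sigma_{t}$ as the semicircle law of variance $t$ supported on $[-2\sqrt t, 2\sqrt t]$, and a classical computation yields $\I^{t}(\sigma_{t}) = \frac{3}{4} - \frac{1}{2}\log t$. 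This matches the partition function asymptotic $-\frac{1}{N^{2}}\log Z_{N,t} \to \I^{t}(\sigma_{t})$, so the centred rate function becomes precisely $\I^{t}(\mu) + \frac{1}{2}\log t - \frac{3}{4}$. The main obstacle is the truncation step: it must be fine enough to pass to $\I^{t}$ on closed sets in the limit, yet robust enough to absorb the $O(N)$ diagonal discrepancy between $\J^{t}$ and $\I^{t}$ at the discrete level.
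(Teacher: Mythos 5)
The paper does not prove Theorem \ref{Alice}: it is quoted as Theorem 1.3 of Ben Arous--Guionnet, with the reader referred to that paper and to \cite{AndGuiZei10} for the argument, and only the properties of $\I^{t}$ and the minimiser $\sigma_t$ needed later are spelled out in Sections 2.3--2.5. Your outline is a faithful skeleton of the standard proof found in those references: Weyl's formula reduces to a Laplace problem, exponential tightness comes from the Gaussian confinement, the lower bound is by localisation near the classical locations of the target, the upper bound is by the truncation $\phi_M=\min(-\log|x-y|,M)$ together with the diagonal estimate $N^2\J^t\ge N^2\J^t_M-NM$ and a monotone limit on compacts, and the centring constant $\frac12\log t-\frac34=-\I^t(\sigma_t)$ is recovered from Stirling applied to the explicit normalisation. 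Two small remarks: the Frullani identity should read $-\log|x-y|=\int_0^\infty s^{-1}\bigl(e^{-s|x-y|}-e^{-s}\bigr)\,ds$ (your sign is flipped, though this does not affect the positivity argument for signed measures of mass zero); and the lower bound is the genuinely delicate step --- a target $\mu$ with finite logarithmic energy need not admit a usable modulus of continuity, so one either mollifies $\mu$ first and passes to the limit in $\I^t$, or works with the ordered quantiles and bounds $\sum_{i<j}\log(\lambda^*_j-\lambda^*_i)$ by a Riemann-sum comparison with the energy integral, which is where the finiteness of $\I^t(\mu)$ is used directly. With those caveats your proposal is the same route the cited sources take.
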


More details on the large deviation principle and on the properties of the functional $\I^{t}$ can of course be found in  the original paper \cite{BenArousGuionnet}, as well as in the book \cite{AndGuiZei10}. 

\subsection{A problem of minimisation} Let us make a couple of general remarks on the functional $\I^{t}$, most of which apply mutatis mutandis to all the functionals of the same type which we will consider in this paper. 

To start with, there is an issue of definition, because the first term of the right-hand side of \eqref{defItWig} is not defined for every $\mu\in\Prob(\R)$. This problem is solved by setting, for all $(x,y) \in \R^2$, 
$$ g_t(x,y)= - \log|x-y| +\frac{x^2+y^2}{4t}.$$
The function $g_{t}$ is bounded below and, for all $\mu\in\Prob(\R)$,
\[\I^{t}(\mu)=\iint_{ \R^{2}} g_t(x,y) \; d\mu(x)d\mu(y)\]
is well defined as an element of $(-\infty,\infty]$. Moreover, the map $\I^{t}$ itself is bounded below, and not identically equal to $\infty$.  Since the function $g_t$ is lower semi-continuous, the map $\I^{t}$ itself is lower semi-continuous in the weak topology. A minimising sequence for $\I^{t}$ has bounded second moment, so that it is tight, and $\I^{t}$ attains its infimum. It is also true, although less easy to prove, that $\I^{t}$ is a strictly convex function on $\Prob(\R)$ (see \cite[Lemma I.1.8]{SafTot97} or \cite[Lemma 2.6.2]{AndGuiZei10}), so that it attains its infimum at a unique probability measure. We shall denote this unique minimiser by $\sigma_{t}$ and our goal in the first part of this paper is to rediscover the shape of $\sigma_{t}$.

Before we turn to the determination of $\sigma_{t}$, let us introduce some notation and collect a few general facts related to minimisation problems slightly more general than that of the minimisation of $\I^{t}$. 

Consider $\mu\in \Prob(\R)$ with compact support. We denote by $\Up\mu$ the logarithmic potential of $\mu$, which is the function defined on $\R$  by 
\[\Up{\mu}(x)= \int_{\R} -\log |x-y|  \; d\mu(y).\]
We shall also make use of the Stieltjes transform $\Gp{\mu}$  of $\mu$ given, for all $z \in \C\setminus \Supp(\mu)$, by 
\[\Gp{\mu}(z)=\int_{\R} \frac{d\mu(y)}{z-y}.\]
On $\R\setminus \Supp(\mu)$, one has $(\Up{\mu})'=-\Gp{\mu}$. On the support of $\mu$, and under the assumption that $\mu$ has a sufficiently regular density, this relation takes a slightly different form, which we now explain.

For any H\"older continuous function $\psi : \R \rightarrow \R$ with compact support, we shall denote respectively by $U^{\psi}$ and $G^{\psi}$ the logarithmic potential and Stieltjes transform of the measure $\psi(x)dx$. We also set 
\[\PVint  \frac{\psi(y)}{x-y} \; dy = \lim_{h\to 0}\int_{|y-x|>h}  \frac{\psi(y)}{x-y} \; dy,\]
the principal value of this singular integral. Then, for all $x \in \R$, 
\begin{equation}\label{derUPV}
(\Up{\psi})'(x)= - \PVint  \frac{\psi(y)}{x-y} \; dy.
\end{equation}

The following theorem describes the class of minimisation problems which we are going to encounter, states for each of them the existence and uniqueness of its solution, and gives a characterisation of this solution based on its logarithmic potential. Given a closed subset $\Sigma$ of $\R$, we denote by $\Prob(\Sigma)$ the set of Borel probability measure on $\Sigma$. The logarithmic energy of a compactly supported measure $\mu\in \Prob(\Sigma)$ is the number ${\mathcal E}(\mu) \in (-\infty,+\infty]$ defined by
\begin{equation}\label{defLI}
{\mathcal E}(\mu)=\iint -\log|x-y| \; d\mu(x)d\mu(y).
\end{equation}
We say that a compact subset $K$ of $\C$ has positive capacity if there exists a probability measure supported by $K$ and with finite logarithmic energy.

\begin{theorem}[{\cite[Thm I.1.3 and I.3.3]{SafTot97}}]\label{theo:minsanscontrainte}
Let $\Sigma$ be a closed subset of $\R$ of positive capacity. Let $Q : \Sigma \rightarrow \R$ be a lower semi-continuous function such that $\lim_{|x| \rightarrow +\infty}(Q(x) -\log|x|)= +\infty$. Define, for any $\mu \in \Prob(\Sigma)$,
\begin{equation}\label{defIQ}
\I_Q(\mu)=\iint_{ \R^{2}} -\log |x-y| \; d\mu(x)d\mu(y) +  2\int_{\R} Q(x) d\mu(x).
\end{equation}
The infimum of $\I_Q$ over the set $\Prob(\Sigma)$ is finite and it is reached at a unique probability measure $\mu^{*},$ which is  compactly supported. 

Moreover, assume that $\mu$ is a compactly supported probability measure on $\Sigma$ with finite logarithmic energy and such that there exists a constant $F_Q$ for which 
\begin{equation}
 \tag{EL}\label{EL}
\left\{\begin{array}{ll}
U^{\mu}+Q = F_{Q} & \mbox{on } \Supp(\mu), \\
U^{\mu}+Q \geq F_{Q} & \mbox{on } \Sigma.
\end{array}\right. 
\end{equation}

Then $\mu=\mu^{*}$.
\end{theorem}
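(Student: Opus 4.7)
The plan rests on two ingredients. The \emph{analytic} ingredient is lower semi-continuity of $\I_Q$ combined with the coercivity estimate
\[
-\log|x-y| \geq -\log 2 - \log(1+|x|) - \log(1+|y|),
\]
which gives $\I_Q(\mu) \geq -\log 2 + 2\int (Q(x) - \log(1+|x|))\,d\mu(x)$. Since $Q(x) - \log(1+|x|) \to +\infty$ by hypothesis, any sublevel set $\{\I_Q \leq C\}$ is tight. The \emph{functional-analytic} ingredient is that the bilinear form $B(\sigma,\tau) = \iint -\log|x-y|\,d\sigma(x)\,d\tau(y)$ is strictly positive definite on the cone of compactly supported signed measures with zero total mass and finite logarithmic energy; this is what underlies the strict convexity result cited in \cite[Lemma I.1.8]{SafTot97} and is typically obtained from a Fourier-type representation of the logarithmic kernel on that subspace.

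With these ingredients, existence is the direct method: write $\I_Q(\mu) = \iint g_Q(x,y)\,d\mu(x)d\mu(y)$ with $g_Q(x,y) = -\log|x-y| + Q(x) + Q(y)$ lower semi-continuous and bounded below on $\Sigma \times \Sigma$, so $\I_Q : \Prob(\Sigma) \to (-\infty,+\infty]$ is weakly lower semi-continuous; a measure $\mu_0$ with compact support and finite logarithmic energy exists by the positive capacity hypothesis and gives $\I_Q(\mu_0) < +\infty$, so the infimum is finite; any minimising sequence is tight and a weak limit attains the infimum. Uniqueness follows from the parallelogram identity
\[
\I_Q\!\left(\tfrac{\mu_1+\mu_2}{2}\right) = \tfrac{1}{2}\I_Q(\mu_1) + \tfrac{1}{2}\I_Q(\mu_2) - \tfrac{1}{4}B(\mu_1-\mu_2,\mu_1-\mu_2)
\]
combined with strict positivity of $B$. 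For compact support of $\mu^{*}$, I would argue by contradiction: if $\Supp(\mu^{*})$ were unbounded, transporting a small portion of $\mu^{*}$ from a tail point to the interior of the support would strictly decrease $\I_Q$, by the rapid growth of $Q(x) - \log|x|$.

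For the sufficiency of the Euler--Lagrange conditions, the key computation is the following. Let $\mu$ be as in the hypothesis and let $\nu \in \Prob(\Sigma)$ with $\I_Q(\nu) < +\infty$ (otherwise there is nothing to prove). Setting $\sigma = \nu - \mu$, which has total mass zero and (using that $Q$ is bounded below) finite logarithmic energy, one expands
\[
\I_Q(\nu) - \I_Q(\mu) = B(\sigma,\sigma) + 2\int (U^{\mu} + Q)\,d\sigma.
\]
The first term is nonnegative by positivity of $B$. For the second, the conditions (EL) give $\int (U^{\mu}+Q)\,d\nu \geq F_Q$ because $U^{\mu}+Q \geq F_Q$ on $\Sigma \supset \Supp(\nu)$, while $\int (U^{\mu}+Q)\,d\mu = F_Q$ because $U^{\mu}+Q = F_Q$ on $\Supp(\mu)$. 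Hence $\I_Q(\nu) \geq \I_Q(\mu)$, which shows $\mu$ minimises $\I_Q$, and the uniqueness of the minimiser then forces $\mu = \mu^{*}$.

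The main obstacle is the strict positivity of $B$ on zero-mass signed measures: it is not an elementary pointwise inequality but the genuine analytic input of the theorem, and the cleanest route to it is through the identity $B(\sigma,\sigma) = \int_0^\infty \xi^{-1}|\hat{\sigma}(\xi)|^2\,d\xi$ valid for such $\sigma$. Once this is granted, the rest is a soft compactness and convexity routine.
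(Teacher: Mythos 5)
The paper does not prove this statement: it is quoted verbatim from Saff and Totik \cite[Thm I.1.3 and I.3.3]{SafTot97} and used as a black box, so there is no internal proof to compare against. Your reconstruction follows the standard Saff--Totik line of argument (direct method, strict positivity of the energy form on zero-mass signed measures, variational inequality from the Euler--Lagrange conditions), and the key computation $\I_Q(\nu)-\I_Q(\mu)=B(\sigma,\sigma)+2\int(U^{\mu}+Q)\,d\sigma$ with the sign of each term controlled by \eqref{EL} is correct.

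Two small points deserve attention. First, to get finiteness of the infimum it is not quite enough to invoke a measure $\mu_0$ of finite logarithmic energy: you also need $\int Q\,d\mu_0<\infty$, and since $Q$ is only lower semi-continuous it may be unbounded above on a compact set. The fix is short: pick a compact $K\subset\Sigma$ of positive capacity, write $K=\bigcup_n\{x\in K:Q(x)\le n\}$, use subadditivity of capacity to find $n$ with $\{Q\le n\}\cap K$ of positive capacity, and take $\mu_0$ to be its equilibrium measure. Second, when you expand $\I_Q(\nu)-\I_Q(\mu)$ you are implicitly assuming that $B(\nu,\nu)$, $\int Q\,d\nu$ and $B(\sigma,\sigma)$ are all finite and that the algebra is legitimate; this does hold whenever $\I_Q(\nu)<\infty$, but justifying it requires either an approximation by compactly supported measures or the lower bound $-\log|x-y|\ge -\log(1+|x|)-\log(1+|y|)$ you already wrote down, applied to show $\int Q\,d\nu<\infty$. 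These are exactly the kinds of details that the Saff--Totik text spends a few pages on; your sketch correctly identifies the strict positivity of $B$ on the zero-mass cone as the genuine analytic input and otherwise captures the right structure.
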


The measure $\mu^{*}$ is called the weighted equilibrium measure on the set $\Sigma$ in the external potential $Q$. 

The last assertion of this theorem is a powerful touchstone which allows one to check that a given measure is indeed the weighted equilibrium measure in a certain potential. It can be understood, in electrostatic terms, by observing that $U^{\mu}+Q$ is the electrostatic potential jointly created by the distribution of charge $\mu$ and the external potential $Q$. The fact that it is constant inside the support of $\mu$ indicates that the charges are at equilibrium inside this support. The fact that it takes greater values outside this support indicates that the charges are confined within the support, and that taking any small amount of charge outside the support would increase the electrostatic energy of the system. Otherwise stated, the system \eqref{EL} is nothing but what physicists call the Euler-Lagrange equations associated to this problem.

This characterisation does however not indicate how to find an expression of the measure $\mu^{*}$. We will now come back to our original problem and explain how a method which we learned in the treatise of Gakhov \cite{Gakhov}, and which is also explained in Section IV.3 of \cite{SafTot97},  allows one to find a reasonable candidate for the weighted equilibrium measure on $\R$ in the potential $Q(x)=\frac{1}{4t}x^{2}$. Then, in Section \ref{sec:minproof}, we will apply Theorem \ref{theo:minsanscontrainte} and give a rigorous proof that this candidate is indeed the minimiser. 

As explained in the introduction, what we explain here is not at all new, and can be regarded as a preparation for the much less classical minimisation problem under constraint that we will be facing in the second part of the paper.

{\sl
\subsection{Derivation of the weighted equilibrium measure}\label{sec:findmin}
Our goal in this section is to identify $\sigma_{t}$, the unique minimiser of $\I^{t}.$ As explained above, we will not care too much about rigour: what follows is not for the faint of heart. 

Let $t>0$ be fixed. We start by making the ansatz that $\sigma_t$ admits a density, which we denote by $\phi_t$, with respect to the Lebesgue measure, and that this density is regular enough, say H\"older continuous.

Since $\sigma_{t}$ is a minimiser of $\I_t$ over $\Prob(\R)$, and assuming all the required smoothness, $\I_t$ does not vary at first order when we add to $\sigma_{t}$ a small signed Borel measure of null total mass. A formal computation turns this observation into the assertion that the function
\begin{equation}\label{x-}
x\mapsto 2\int_{\R} -\log |x-y| \phi_{t}(y) \; dy +\frac{1}{2t} x^{2}
\end{equation}
must be constant on the support of $\sigma_t,$ which is nothing but the first equation of the system \eqref{EL} in this case.
Since the quadratic external potential grows much faster than the logarithmic interaction potential, we expect $\sigma_{t}$ to have compact support and, by symmetry, a support symmetric with respect to the origin. We postulate\footnote{More details about the support will be given in Section \ref{sec:supportMS}.} that this support is a symmetric interval $[-a,a].$ Therefore, differentiating \eqref{x-}  using \eqref{derUPV} leads us to the singular integral equation
\begin{equation}\label{PVeqWigner}
\PVint \frac{\phi_t(y)}{x-y} \; dy=\frac{x}{2t} \ \mbox{ for all } x\in [-a,a],
\end{equation}
where the unknown is the function $\phi_{t}$.

To solve this equation, we are going to use the Plemelj-Sokhotskyi formula (see for example \cite{Gakhov}), which asserts that if $\psi$ is H\"older continuous, then for every $x\in \R$,
\begin{equation}\label{PS}
\Gppm{\psi}(x)=
\PVint \frac{\psi(y)}{x-y} \; dy \mp  i \pi \psi(x),
\end{equation}
where we use the following notation : given a function $H$ analytic on $\C\setminus \R$ and a point $x$ on the real axis, the numbers $H_{\pm}(x)=\lim_{\epsilon \to 0} H(x\pm i\epsilon)$ are the limits from above and from below of $H$ at $x$.

The formula \eqref{PS} can equivalently be written as follows: for every $x\in \R$, we have
\begin{align}
\label{Gpv}&\displaystyle  \Gpp{\psi}(x)+\Gpm{\psi}(x) =2\ \PVint \frac{\psi(y)}{x-y} \; dy, \\ 
\label{Gpsi}&\Gpp{\psi}(x)-\Gpm{\psi}(x)=-2i\pi \psi(x).
\end{align}

Our strategy for solving \eqref{PVeqWigner} can be summarised as follows.  
\begin{itemize}
\item Equation \eqref{Gpsi}  allows us to extract $\phi_{t}$ from the knowledge 
of its Stieltjes transform $\Gp{\phi_{t}}$.
\item Our main goal will be to find $\Gp{\phi_{t}}.$ In view of \eqref{PVeqWigner} and \eqref{Gpv}, we will look for a function $H_{t}$ analytic on $\C\setminus \R$ which satisfies 
\begin{equation} \label{eqHt}
(H_t)_+(x) +(H_t)_-(x) = \frac{x}{t}, \ x\in [-a,a],
\end{equation}
and which has near infinity the behaviour of the Stieltjes transform of a probability measure, that is, $H_t(z) = z^{-1} + o(z^{-2})$. This will be our candidate for $\Gp{\phi_{t}}$.
\item Unfortunately, Equation \eqref{eqHt} is not particularly easy to solve directly. As \eqref{Gpsi} shows, what is easier is to find a function analytic on $\C\setminus \R$ with prescribed jump across the real axis. In order to bring \eqref{eqHt} into an equation of the form \eqref{Gpsi}, we introduce an auxiliary function, as we now explain. 
\end{itemize}

Let us introduce the function 
\[R(z)=\sqrt{z^{2}-a^{2}},\]
analytic on $\C\setminus [-a,a]$, and where we choose the branch of the square root which is a positive real number on $(a,\infty)$. For every $x\in (-a,a)$, we have
\[R_{\pm}(x)=\pm i \sqrt{a^{2}-x^{2}},\]
which is to say that the limits of $R$ on the real axis from above and from below are opposite. This is exactly what is needed to turn the sum which appears in \eqref{Gpv} into the difference which appears in \eqref{Gpsi}.

More explicitly, a function $H_{t}$ satisfies \eqref{eqHt} if and only if
$$ \left(\frac{H_t}{R}\right)_+(x) -  \left(\frac{H_t}{R}\right)_-(x) = -2i\pi \frac{x}{2\pi t\sqrt{a^2-x^2}}, \ x\in [-a,a].
$$
According to \eqref{Gpsi}, a solution of this equation is given by
\[H_t(z)=\frac{\sqrt{z^{2}-a^{2}}}{2\pi t}\int_{-a}^{a} \frac{y}{(z-y)\sqrt{a^{2}-y^{2}}} \; dy.\]
One computes
\[\int_{-a}^{a} \frac{y}{(z-y)\sqrt{a^{2}-y^{2}}} \; dy=\frac{z\pi}{\sqrt{z^{2}-a^{2}}}-\pi,\]
and finds
\[H_t(z)=\frac{1}{2t}(z-\sqrt{z^{2}-a^{2}}).\]
The parameter $a$ has still to be fixed. For this, we consider the behaviour of $H_{t}$ near infinity: we have $H_t(z)=\frac{a^{2}}{4t}z^{-1}+O(z^{-2})$. On the other hand, for $H_t$ to  be the Stieltjes transform of a probability measure, it should be equivalent to $z^{-1}$ at infinity. This entails
\[a=2\sqrt{t}.\]
Then, for each $x\in [-2\sqrt{t},2\sqrt{t}]$, we find the function $\phi_t$, our candidate to be a solution of \eqref{PVeqWigner}, by writing
\[\phi_t(x)=-\frac{1}{2i\pi} ((H_t)_+-(H_t)_-)(x)=\frac{1}{2\pi t}\sqrt{4t-x^{2}}.\]
This is the much expected semi-circular distribution. 
}

\subsection{Verification of the expression of the equilibrium measure}\label{sec:minproof} 
Let us now give a proof of the following proposition.

\begin{proposition}\label{proofmin}
 The probability measure $\sigma_{t}$ given by
\begin{equation}\label{sigmat}
d\sigma_{t}(x)=\frac{1}{2\pi t}\sqrt{4t-x^{2}}\; \1_{[-2\sqrt{t},2\sqrt{t}]}(x)\; dx.
\end{equation}
is the unique minimiser over $\Prob(\R)$ of the functional $\I^{t}$ defined by \eqref{defItWig}.
\end{proposition}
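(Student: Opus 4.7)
The plan is to apply Theorem \ref{theo:minsanscontrainte} with $\Sigma = \R$ and $Q(x) = \frac{x^2}{4t}$, for which $\I_Q$ coincides with $\I^t$. The growth and lower semi-continuity conditions on $Q$ are obvious, and $\sigma_t$ has finite logarithmic energy since its density is bounded and compactly supported. It therefore suffices to exhibit a constant $F_Q$ for which $\sigma_t$ is a solution of the Euler--Lagrange system \eqref{EL}; uniqueness will then be automatic.

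For the equality on the support $[-2\sqrt{t}, 2\sqrt{t}]$, I would run backwards the reasoning of Section \ref{sec:findmin}. The function $H_t(z) = \frac{1}{2t}(z - \sqrt{z^2-4t})$, with the branch fixed so that $H_t(z)\sim 1/z$ at infinity on both sides of $\R$, is analytic on $\C \setminus [-2\sqrt{t},2\sqrt{t}]$. A direct inspection of $(H_t)_{\pm}(x) = \frac{1}{2t}(x \mp i\sqrt{4t-x^2})$ on the cut shows that its boundary difference is $-2i\pi\phi_t$ and its boundary sum equals $x/t$. By Stieltjes inversion, the first identity forces $H_t = \Gp{\sigma_t}$; by \eqref{Gpv}, the second forces $\PVint \frac{\phi_t(y)}{x-y}\,dy = \frac{x}{2t}$ on the interior of the support, and combined with \eqref{derUPV} this gives $(\Up{\sigma_t})'(x) = -\frac{x}{2t}$ there. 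Hence $\Up{\sigma_t}+Q$ is constant on the interior, and by continuity on the whole support. Evaluating at $x=0$ via the substitution $y = 2\sqrt{t}\cos\theta$ and standard trigonometric integrals fixes the value $F_Q = \frac{1}{2} - \frac{1}{2}\log t$.

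To verify the inequality off the support, I would use the relation $(\Up{\sigma_t})' = -\Gp{\sigma_t}$ on $\R\setminus \Supp(\sigma_t)$, which holds without any principal value issue. From the explicit form of $H_t$ one reads off $\Gp{\sigma_t}(x) = \frac{1}{2t}(x - \sgn(x)\sqrt{x^2-4t})$ for $|x| > 2\sqrt{t}$, with the positive branch of the square root. A direct computation yields
$$ (\Up{\sigma_t}+Q)'(x) = \frac{\sgn(x)\sqrt{x^2-4t}}{2t},$$
which is positive on $(2\sqrt{t}, \infty)$ and negative on $(-\infty, -2\sqrt{t})$. Continuity of $\Up{\sigma_t}+Q$ at $\pm 2\sqrt{t}$, together with the value $F_Q$ attained there, then yields $\Up{\sigma_t}+Q \geq F_Q$ on the complement of the support.

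The only delicate point in this plan is some careful bookkeeping about the branch of $\sqrt{z^2-4t}$, so that $H_t$ extends analytically across each of the two unbounded components of $\R\setminus[-2\sqrt{t},2\sqrt{t}]$ with the correct behaviour $1/z$ at infinity on both sides; no conceptual difficulty arises. Once both lines of \eqref{EL} are established, the last assertion of Theorem \ref{theo:minsanscontrainte} identifies $\sigma_t$ as the unique minimiser of $\I^t$ over $\Prob(\R)$.
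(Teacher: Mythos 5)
Your argument is correct and, like the paper's, reduces everything to verifying the Euler--Lagrange system \eqref{EL} and invoking the last assertion of Theorem \ref{theo:minsanscontrainte}. Where you differ is in how the potential information is obtained. The paper simply cites the closed-form expression \eqref{lepotWig} for $U^{\sigma_t}$ (referring to Saff--Totik) and then checks both lines of \eqref{EL} by inspection. You instead work at the level of derivatives: you identify $H_t(z)=\frac{1}{2t}(z-\sqrt{z^2-4t})$ as $G^{\sigma_t}$ by comparing jumps and behaviour at infinity (the Morera-plus-Liouville argument, which the paper itself uses later in the constrained case), read off $(U^{\sigma_t})'$ from \eqref{derUPV} and \eqref{Gpv} inside the support and from $(U^{\sigma_t})'=-G^{\sigma_t}$ outside, and then integrate. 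Your route is more self-contained — it does not need the closed-form potential — at the modest cost of one extra step (evaluating $U^{\sigma_t}(0)$ to pin down $F_Q$, which in fact is not even logically necessary: constancy on the support together with the monotonicity of $U^{\sigma_t}+Q$ off the support and continuity at $\pm2\sqrt t$ already give the second inequality in \eqref{EL}). Both arguments hinge on the same branch bookkeeping for $\sqrt{z^2-4t}$, and you handle it correctly: your expression $G^{\sigma_t}(x)=\frac{1}{2t}\bigl(x-\sgn(x)\sqrt{x^2-4t}\bigr)$ for $|x|>2\sqrt t$ (usual positive root) is exactly the boundary value of $H_t$ on both unbounded components.
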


\begin{proof} We want to apply Theorem \ref{theo:minsanscontrainte}.
For this, we need to know the logarithmic potential of $\sigma_{t}$. Standard computations (see for example Theorem 5.1 in \cite{SafTot97}) show that 
\begin{align} \label{lepotWig}
\Up{\sigma_{t}}(x) &=\left\{\begin{array}{ll} -\frac{x^{2}}{4t}+\frac{1-\log t}{2} & \mbox{if } |x|\leq 2\sqrt{t}, \\[3pt]
\frac{-x^{2}+|x|\sqrt{x^{2}-4t}}{4t} - \log(|x|+\sqrt{x^{2}-4t})+\frac{1}{2}+\log 2 & \mbox{if } |x|> 2\sqrt{t}. \end{array}
\right.
\end{align}
Therefore, $U^{\sigma_{t}}(x)+\frac{x^{2}}{4t}$ is constant on the support of $\sigma_{t}$, equal to $\frac{1-\log t}{2}$, and an elementary verification shows that it is larger than this constant outside the support of $\sigma_{t}$. This suffices to guarantee that $\sigma_{t}$ is the minimiser of $\I^{t}$.
\end{proof}

\begin{figure}[h!]
\begin{center}
\includegraphics[width=5cm]{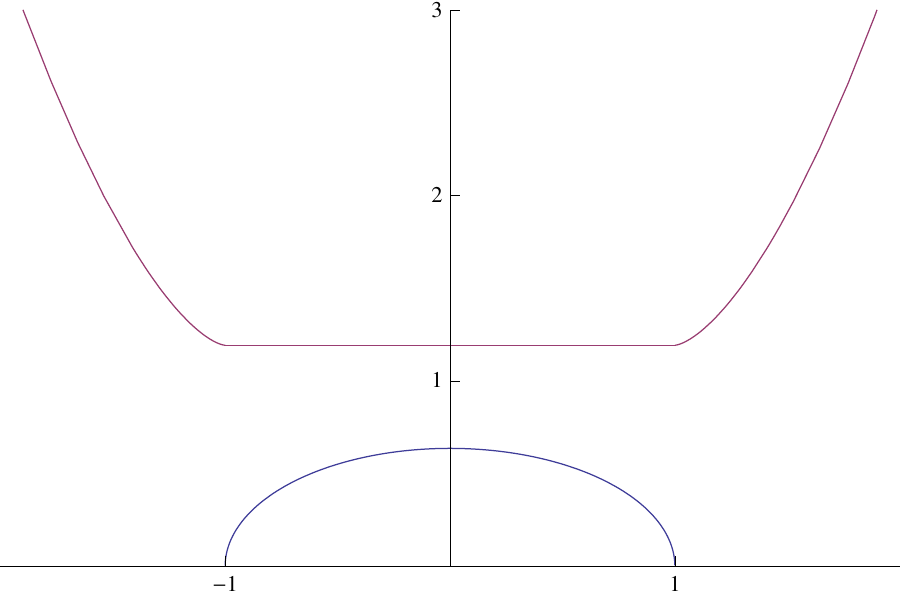}
\caption{The density of $\sigma_{t}$ and the graph of $x\mapsto \Up{\sigma_{t}}(x)+\frac{x^{2}}{4t}$ for $t=\frac{1}{4}$.}
\end{center}
\end{figure}

Since we know where $\I^{t}$ achieves its minimum, we can compute this minimum, and find
\[\I^{t}(\sigma_{t})=\int_{\R} \left(\Up{\sigma_{t}}(x)+\frac{x^{2}}{2t}\right) \; d\sigma_{t}(x)=\frac{1-\log t}{2}+\frac{1}{4t}\int_{\R} x^{2} \; d\sigma_{t}(x)=\frac{3}{4}-\frac{1}{2}\log t.\]
In particular, the rate function appearing in Theorem \ref{Alice} is equal to zero at $\sigma_t$, as expected.

\subsection{The support of the weighted equilibrium measure}\label{sec:supportMS}
Let us conclude this warm up by mentioning that we could have approached the determination of the minimiser from a slightly different viewpoint, which we will adopt and illustrate in great detail in the problem of minimisation under constraint which constitutes the main object of this paper. 

This alternative approach relies on a series of remarkable results of Mhaskar, Saff and Totik, which allows one to determine before anything else the support of the weighted equilibrium measure. Recall from \eqref{defLI} the definition of the logarithmic energy of a compactly supported probability measure on $\C$, and of the notion of positive capacity for a compact subset $K$ of $\C.$

It follows from a classical result of Frostman, of which Theorem \ref{theo:minsanscontrainte} is an elaborated version, that for every compact subset $K$ of $\C$ with positive capacity, there exists a unique probability measure $\omega^{K}$ supported by $K$ and with minimal logarithmic energy. This probability measure is called the equilibrium measure of $K$.

In the following theorem, and until the end of this section, we work under the assumptions and with the notations of Theorem \ref{theo:minsanscontrainte}.

\begin{theorem}[{\cite[Thm. IV.1.5]{SafTot97}}]\label{theo:MS}
For every compact subset $K$ of $\Sigma$ with positive capacity, set
\begin{equation}\label{eq:defMS}
\MS_{Q}(K)=\int (U^{\omega_{K}}+Q) \; d\omega_{K}.
\end{equation}
The support of the weighted equilibrium measure on $\Sigma$ in the external potential $Q$ is the smallest compact subset of $\Sigma$ with finite logarithmic energy which minimises the functional $\MS_{Q}$.
\end{theorem}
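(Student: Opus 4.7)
The plan is to identify $S^{*}:=\Supp(\mu^{*})$, where $\mu^{*}$ is the weighted equilibrium measure produced by Theorem \ref{theo:minsanscontrainte}, as a minimiser of $\MS_{Q}$ with value $F_{Q}$, and then to show that any other compact minimiser of finite logarithmic energy must contain $S^{*}$. The three workhorse tools are classical: the symmetry of the mutual logarithmic energy $\int \Up{\nu}\,d\rho=\int \Up{\rho}\,d\nu$ (for $\nu,\rho$ of finite logarithmic energy), the identity $\Up{\omega_{K}}=V(K)$ quasi-everywhere on $K$ (where $V(K):=\int \Up{\omega_{K}}\,d\omega_{K}$ is the Robin constant), and Frostman's maximum principle $\Up{\omega_{K}}\le V(K)$ on $\C$.

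For the computation of $\MS_{Q}(S^{*})$, I would apply the symmetry identity to $\mu^{*}$ and $\omega_{S^{*}}$, combined with the first line of \eqref{EL} (valid $\omega_{S^{*}}$-almost everywhere because $\omega_{S^{*}}$ does not charge polar sets) and the identity $\Up{\omega_{S^{*}}}=V(S^{*})$ at $\mu^{*}$-almost every point (since $\mu^{*}$ has finite logarithmic energy). This yields
$$V(S^{*})=\int \Up{\omega_{S^{*}}}\,d\mu^{*}=\int \Up{\mu^{*}}\,d\omega_{S^{*}}=F_{Q}-\int Q\,d\omega_{S^{*}},$$
so that $\MS_{Q}(S^{*})=V(S^{*})+\int Q\,d\omega_{S^{*}}=F_{Q}$. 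For the matching lower bound on an arbitrary compact $K\subset\Sigma$ of positive capacity and finite logarithmic energy, I would integrate the inequality $\Up{\mu^{*}}+Q\ge F_{Q}$ from \eqref{EL} against $\omega_{K}$ (legitimate because $\omega_{K}$ then puts no mass on the polar exceptional set), apply symmetry, and invoke Frostman's bound:
$$F_{Q}\le \int \Up{\omega_{K}}\,d\mu^{*}+\int Q\,d\omega_{K}\le V(K)+\int Q\,d\omega_{K}=\MS_{Q}(K).$$

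For the minimality-in-size statement, which I expect to be the principal obstacle, I would chase equality through the previous chain: when $\MS_{Q}(K)=F_{Q}$, necessarily $\Up{\omega_{K}}(x)=V(K)$ at $\mu^{*}$-almost every $x$. Here the hypothesis $\Sigma\subset\R$ is decisive, because $\C\setminus K$ then has a single, unbounded component; on it $\Up{\omega_{K}}$ is harmonic and tends to $-\infty$ at infinity, so the strict inequality $\Up{\omega_{K}}<V(K)$ holds throughout $\C\setminus K$ by the maximum principle for harmonic functions. Consequently $\mu^{*}$ is carried by the closed set $K$, whence $S^{*}=\Supp(\mu^{*})\subset K$. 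The most delicate technical point, which I expect to absorb the bulk of the write-up, is verifying that the almost-everywhere identities and inequalities from \eqref{EL} may indeed be integrated against each of the equilibrium measures in sight; this rests entirely on the finite-logarithmic-energy assumptions, which prevent the measures involved from charging the polar exceptional sets hidden in \eqref{EL}.
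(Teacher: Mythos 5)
This theorem is quoted by the paper from \cite[Thm.\ IV.1.5]{SafTot97} without proof, so there is no in-paper argument to compare against; what you have written is, as far as I can tell, a correct and essentially standard proof along the lines of Saff and Totik's own. The chain $V(S^{*})=\int U^{\omega_{S^{*}}}\,d\mu^{*}=\int U^{\mu^{*}}\,d\omega_{S^{*}}=F_{Q}-\int Q\,d\omega_{S^{*}}$ and the lower bound $F_{Q}\le \int U^{\omega_{K}}\,d\mu^{*}+\int Q\,d\omega_{K}\le V(K)+\int Q\,d\omega_{K}$ are exactly the right computations, resting on Frostman's theorem, symmetry of mutual energy, and the fact that equilibrium measures of finite logarithmic energy do not charge polar sets; and the minimality claim is correctly closed by the strict maximum principle for the harmonic function $U^{\omega_{K}}$ on the connected open set $\C\setminus K$, where the hypothesis $K\subset\R$ guarantees connectedness. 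The one point worth being explicit about in a write-up: the system \eqref{EL} is stated in Theorem~\ref{theo:minsanscontrainte} as a \emph{sufficient} condition to identify $\mu^{*}$, whereas your argument needs the \emph{necessary} Euler--Lagrange relations satisfied by $\mu^{*}$, namely $U^{\mu^{*}}+Q\ge F_{Q}$ quasi-everywhere on $\Sigma$ and $U^{\mu^{*}}+Q\le F_{Q}$ on $\Supp(\mu^{*})$ (so $=F_{Q}$ q.e.\ on $\Supp(\mu^{*})$). You use these correctly, but they should be cited from the appropriate part of \cite{SafTot97} rather than read off the sufficient-condition form of \eqref{EL}.
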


The functional $\MS_{Q}$ is called the Mhaskar and Saff functional, after the names of the authors who introduced it. Let us emphasize that the definition which we gave for this functional is the opposite of the original one, simply because we find it easier to consistently minimise functionals. 

Theorem \ref{theo:MS} alone does not suffice to identify the support of the weighted equilibrium measure, for there are too many compact subsets of $\Sigma$. However, the following theorem allows one, in many concrete situations, to restrict the search to a much narrower class of subsets.

\begin{theorem}[{\cite[Thm. IV.1.10 (b)]{SafTot97}}]\label{supportconvexe}
Consider an interval $I\subset \Sigma$ such that $Q$ is convex on $I$. Then the intersection of the support of $\mu^{*}$ with $I$ is an interval.
\end{theorem}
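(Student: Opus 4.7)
The plan is to argue by contradiction. Assume $\Supp(\mu^{*})\cap I$ is not an interval; then there exist $a<b$ in $\Supp(\mu^{*})\cap I$ with $(a,b)\cap \Supp(\mu^{*})=\emptyset$, and after shrinking, one may take the gap $(a,b)$ to be maximal, so that $a,b\in\Supp(\mu^{*})$ (the support is closed) and $[a,b]\subset I$. The goal is to contradict the Euler-Lagrange characterization \eqref{EL} of $\mu^{*}$.

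Since $\mu^{*}$ assigns zero mass to $(a,b)$, the potential $U^{\mu^{*}}$ is smooth on $(a,b)$, and differentiation under the integral sign yields
\[(U^{\mu^{*}})''(x) \;=\; \int \frac{d\mu^{*}(y)}{(x-y)^{2}}, \qquad x\in (a,b),\]
which is bounded below on $[a,b]$ by a positive constant $c$, because $\mu^{*}$ is a probability measure with compact support. Combined with the convexity of $Q$ on $I\supset(a,b)$, this shows that $h := U^{\mu^{*}} + Q - F_{Q}$ is strongly convex on $(a,b)$: for all $x_{1},x_{2}\in(a,b)$,
\[h\bigl(\tfrac{x_{1}+x_{2}}{2}\bigr)\;\leq\;\tfrac{1}{2}(h(x_{1})+h(x_{2}))-\tfrac{c}{8}(x_{2}-x_{1})^{2}.\]
On the other hand, \eqref{EL} gives $h(a)=h(b)=0$ and $h\geq 0$ on $\Sigma\supset[a,b]$; in particular, $h\bigl(\tfrac{a+b}{2}\bigr)\geq 0$. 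Applying the strong-convexity estimate with $x_{1}=a+\epsilon$ and $x_{2}=b-\epsilon$ (so that the midpoint is the fixed point $(a+b)/2$) and letting $\epsilon\to 0^{+}$ would then give $0\leq-\tfrac{c}{8}(b-a)^{2}<0$, a contradiction, \emph{provided} $h(a+\epsilon)$ and $h(b-\epsilon)$ tend to $0$.

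The main obstacle is thus to justify this last fact, i.e.\ the continuity of $h$ at $a$ and $b$ from inside the gap. The function $Q$ is continuous on $I$ as a real-valued lower semi-continuous convex function on an interval (continuous on the interior of $I$ by standard convex analysis, and at any endpoint of $I$ the upper bound coming from chord estimates combines with lower semi-continuity to yield continuity from inside). For $U^{\mu^{*}}$, continuity at $a$ from inside $(a,b)$ follows by dominated convergence: the integrand $-\log|x-y|$ is dominated by $-\log|a-y|$ for $y\leq a$ and by $-\log|b-y|$ for $y\geq b$, and both dominating functions are integrable against $\mu^{*}$ because $U^{\mu^{*}}(a)$ and $U^{\mu^{*}}(b)$ are finite --- this follows directly from the identity $U^{\mu^{*}}=F_{Q}-Q$ at the support points $a,b$ (with $Q$ real-valued), or, should these particular endpoints happen to be ``irregular'', by replacing them with nearby regular points of $\Supp(\mu^{*})$, a set of full capacity in view of the finite-logarithmic-energy hypothesis. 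The symmetric argument handles $b$, completing the contradiction.
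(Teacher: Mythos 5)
The paper does not actually prove this theorem---it is quoted from Saff--Totik---so there is no in-paper proof to compare against. Your strategy is the natural one and is in essence correct: inside a gap $(a,b)$ of $\Supp(\mu^{*})$ contained in $I$, the potential $U^{\mu^{*}}$ has $(U^{\mu^{*}})''=\int (x-y)^{-2}\,d\mu^{*}(y)\geq c>0$ (uniformly, since the support is compact), so $h=U^{\mu^{*}}+Q-F_{Q}$ is uniformly strongly convex there, while the Euler--Lagrange conditions force $h\geq 0$ on $(a,b)$ and $h=0$ at the endpoints---a contradiction. One small simplification: the convergence $U^{\mu^{*}}(a+\epsilon)\to U^{\mu^{*}}(a)$ from inside the gap is automatic by \emph{monotone} convergence (for $y\leq a$ the kernel $-\log(x-y)$ increases as $x\downarrow a$, and for $y\geq b$ it is continuous and bounded), so you do not need dominated convergence nor the a priori finiteness of $U^{\mu^{*}}(a)$ for that step.

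The part that needs more care is exactly the one you flag, and your proposed fallback does not quite work as stated. If the Euler--Lagrange equality is read literally from the paper's \eqref{EL} (equality on all of $\Supp(\mu^{*})$, real-valued $Q$), then $h(a)=h(b)=0$ is immediate and the proof is complete with no fallback needed. But the EL equality in its correct general form holds only quasi-everywhere on the support, and $U^{\mu^{*}}$ may equal $+\infty$ on a capacity-zero subset. Replacing $a$ by a nearby regular point $a'<a$ of $\Supp(\mu^{*})$ does give $h(a')=0$, but the interval $(a',b)$ is no longer a gap of $\Supp(\mu^{*})$, so the strong-convexity estimate on $(U^{\mu^{*}})''$ fails on $(a',a]$ and the contradiction does not go through on $[a',b']$. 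The correct way to close this is different: approach $a$ from within the support by regular points $a_{n}\uparrow a$ (they exist because $\mu^{*}$ has finite energy and hence charges no set of zero capacity), use lower semicontinuity of $U^{\mu^{*}}$ and continuity of $Q$ at $a$ to get $U^{\mu^{*}}(a)\leq\liminf_{n}U^{\mu^{*}}(a_{n})=F_{Q}-Q(a)$, i.e.\ $h(a)\leq 0$; combined with the monotone-convergence continuity from inside the gap, this yields $\limsup_{\epsilon\to 0^{+}}h(a+\epsilon)\leq 0$, which suffices (and symmetrically at $b$). So: right idea, right main estimate, but the exceptional-point fallback as written is a hand-wave that, pursued literally, would undercut the very estimate it is meant to rescue.
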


Using Theorems \ref{theo:MS} and \ref{supportconvexe} and an argument of symmetry, simple computations allow one to prove that the support of $\sigma_{t}$ is $[-2\sqrt{t},2\sqrt{t}]$. 

One of the advantages of knowing the support of the minimising measure beforehand is the possibility of applying the following result.

\begin{theorem}[{\cite[Thm. I.3.3]{SafTot97}}]\label{potquesurlesupport}
Consider a probability measure $\mu\in \Prob(\Sigma)$. Assume that $\nu$ has finite logarithmic energy, that $\Supp(\nu)\subset \Supp(\mu^{*})$ and that $U^{\nu}+Q$ is constant on $\Supp(\mu^{*})$. Then $\nu=\mu^{*}$.
\end{theorem}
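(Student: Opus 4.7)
The plan is to exploit the Euler-Lagrange characterization of $\mu^{*}$ given in Theorem \ref{theo:minsanscontrainte} together with the classical positive-definiteness of the logarithmic energy on signed measures of total mass zero. Set $\tau=\nu-\mu^{*}$: this is a compactly supported signed Borel measure with $\tau(\R)=0$ and $\Supp(\tau)\subset \Supp(\mu^{*})$.

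First I would check that $\mathcal{E}(\tau)$ is well defined and finite. The minimiser $\mu^{*}$ has finite logarithmic energy, since $\I_{Q}(\mu^{*})<+\infty$ and $Q$ is bounded on the compact set $\Supp(\mu^{*})$; together with the hypothesis that $\nu$ has finite logarithmic energy, this makes the bilinear extension
\[\mathcal{E}(\tau)=\iint -\log|x-y|\,d\tau(x)\,d\tau(y)\]
a finite real number.

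Next I would use the hypotheses to evaluate $\mathcal{E}(\tau)$. Applying \eqref{EL} to $\mu^{*}$ gives $U^{\mu^{*}}+Q=F_{Q}$ on $\Supp(\mu^{*})$, while by assumption there is a constant $C$ with $U^{\nu}+Q=C$ on the same set. Subtracting, $U^{\tau}=C-F_{Q}$ is constant on $\Supp(\mu^{*})\supset \Supp(\tau)$, so
\[\mathcal{E}(\tau)=\int U^{\tau}\,d\tau=(C-F_{Q})\,\tau(\R)=0.\]

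The final step is to deduce $\tau=0$, and this is the main obstacle. What is needed is the classical fact that the logarithmic energy form is strictly positive definite on the cone of compactly supported signed measures of total mass zero and finite energy. The standard route is to view $\tau$ as a measure on $\C$ supported on $\R$, regularise it by convolution with a smooth mollifier $\chi_{\epsilon}$ to obtain $\tau_{\epsilon}=\tau*\chi_{\epsilon}$ with smooth compactly supported density; since $-\Delta\log|z|=-2\pi\delta_{0}$ and $\tau_{\epsilon}(\C)=0$ forces $U^{\tau_{\epsilon}}$ to decay at infinity, an integration by parts yields
\[\mathcal{E}(\tau_{\epsilon})=\frac{1}{2\pi}\int_{\C}|\nabla U^{\tau_{\epsilon}}|^{2}\,d\Leb,\]
which is nonnegative and vanishes only when $\tau_{\epsilon}=0$. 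Passing to the limit $\epsilon\to 0$ (using finiteness of $\mathcal{E}(\tau)$ to control the convergence) gives $\mathcal{E}(\tau)\geq 0$ with equality only for $\tau=0$. Combined with $\mathcal{E}(\tau)=0$, this forces $\nu=\mu^{*}$.
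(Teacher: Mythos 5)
The paper does not prove this statement; it is cited verbatim from Saff--Totik \cite[Thm.\ I.3.3]{SafTot97}. Your decomposition $\tau=\nu-\mu^{*}$, the observation that $U^{\tau}$ is constant on $\Supp(\tau)$, and the appeal to strict positive-definiteness of the logarithmic energy on zero-mass signed measures is indeed the standard route and is essentially how Saff and Totik argue. Two steps, however, rest on facts that the paper does not supply and that you invoke without justification.

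First, you write that ``applying \eqref{EL} to $\mu^{*}$ gives $U^{\mu^{*}}+Q=F_{Q}$ on $\Supp(\mu^{*})$.'' Theorem~\ref{theo:minsanscontrainte} as stated only gives \eqref{EL} as a \emph{sufficient} condition for a measure to equal $\mu^{*}$; it does not assert that $\mu^{*}$ itself satisfies \eqref{EL}. What you actually need is the Frostman-type characterisation of the weighted equilibrium measure (Saff--Totik Thm.\ I.1.3), and that characterisation gives $U^{\mu^{*}}+Q\geq F_{Q}$ only \emph{quasi-everywhere} on $\Sigma$, hence $U^{\mu^{*}}+Q=F_{Q}$ only q.e.\ on $\Supp(\mu^{*})$. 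Your computation $\int U^{\tau}\,d\tau=(C-F_{Q})\,\tau(\R)=0$ therefore requires the additional remark that the exceptional polar set is $|\tau|$-null, which holds because both $\nu$ and $\mu^{*}$ have finite logarithmic energy and finite-energy measures do not charge sets of zero capacity. Without this remark the identity is unjustified.

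Second, the strict positive-definiteness of the energy form on compactly supported, zero-mass, finite-energy signed measures --- the step you yourself call the main obstacle --- is precisely where the content of the theorem lies, and you leave it as a sketch. The mollification/integration-by-parts picture is the right heuristic, but the passage $\mathcal{E}(\tau_{\epsilon})\to\mathcal{E}(\tau)$ and, more importantly, the implication $\mathcal{E}(\tau)=0\Rightarrow\tau=0$ (strictness, not mere nonnegativity) need a genuine argument, for instance via the Cauchy--Schwarz inequality for the energy form or a Fourier representation of the kernel. This is exactly the result the paper already cites as \cite[Lemma I.1.8]{SafTot97} in Section~2.3, so the cleanest repair is to quote it rather than re-derive it.
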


Concretely, if we had known that the support of the weighted equilibrium measure on $\R$ in the external potential $x\mapsto \frac{x^{2}}{4t}$ was the interval $[-2\sqrt{t},2\sqrt{t}]$, then the first equality of \eqref{lepotWig} would have been sufficient to ensure that $\sigma_{t}$ was indeed the minimiser. In this situation, this may not seem to be a huge economy of effort, because the computation of $U^{\sigma_{t}}$ is relatively easy, but there are more complicated situations where the computation of the potential can be rather challenging.

\section{Brownian motion and Brownian bridge on the unitary group}

From now on, we focus on the main object of this paper, which is to investigate the Douglas-Kazakov phase transition. In this section, we introduce the Brownian bridge on the unitary group, which is one of the models on which this transition can be studied, and we explain how, at a heuristic level, one can understand the origin of the Douglas-Kazakov phase transition by considering the behaviour of the eigenvalues of this Brownian bridge.

\subsection{The partition function of the model}\label{sec:partitionfunction}
Recall from the introduction that the space $\H_{N}$ of $N\times N$ Hermitian matrices is endowed with the scalar product $\langle X,Y\rangle=N\Tr(XY)$. The linear Brownian motion in $\H_{N}$ is the 
Gaussian process $(X_{t})_{t \ge 0}$ with covariance specified by the following relation:
\[\forall A,B\in \H_{N}, \forall s,t\geq 0,\ \E[\langle X_{s},A\rangle \langle X_{t},B\rangle]=\min(s,t)\langle A,B\rangle.\]
The unique solution of the following linear stochastic differential equation in $M_{N}(\C)$:
\begin{equation}\label{edsBrownien}
dU_{t}=iU_{t}\; dX_{t} -\frac{1}{2}U_{t}\; dt, \textrm{ with initial condition } \ U_{0}=I_{N},
\end{equation}
is called the Brownian motion on the unitary group $\U(N)$ issued from the identity. By computing $d(U_{t}U_{t}^{*})$, one can check that almost surely, $U_t$ stays in $\U(N)$ for all $t \ge 0$.

The distribution of $U_t$ is absolutely continuous with respect to the Haar measure on $\U(N)$ and we will give in \eqref{Fourierp}  an expression of its density, which we denote by $p_{N,t}$.

Let  $L^{2}_{\inv}(\U(N))$ be the space of square-integrable functions with respect to the Haar measure on $\U(N)$ which are invariant by conjugation. This space admits an orthonormal basis indexed by the set $\ZN$ of decreasing sequences $\ell=(\ell_{1}>\ldots>\ell_{N})$ of $N$ integers, which is the Schur basis. For each $\ell\in \ZN$, the Schur function $s_{\ell}$ is the quotient of the restriction to $\U(N)$ of a polynomial function on $M_{N}(\C)$ and an integer power of the determinant. More precisely, if $U$ is a unitary matrix with pairwise distinct eigenvalues $z_{1},\ldots,z_{N}$, then
\[s_{\ell}(U)=\frac{\det(z_{i}^{\ell_{j}})_{i,j=1\ldots N}}{\det(z_{i}^{N-j})_{i,j=1\ldots N}}=\frac{\det(z_{i}^{\ell_{j}})_{i,j=1\ldots N}}{V(z_{1},\ldots,z_{N})},\]
with the notation $V$ for the Vandermonde determinant. The value of $s_{\ell}$ at matrices with multiple eigenvalues, for instance $I_{N}$, cannot be directly computed with this formula, but we have
\[s_{\ell}(I_{N})=\lim_{\alpha\to 0} s_{\ell}(\diag(1,e^{i\alpha},\ldots,e^{i(N-1)\alpha}))=\frac{V(\ell_{1},\ldots,\ell_{N})}{V(1,\ldots,N)},\]
an equality which is known as Weyl's dimension formula. We can use this formula to write the Fourier series of the Dirac mass at $I_{N}$:
\[\delta_{I_{N}} = \sum_{\ell\in \ZN} \frac{V(\ell_{1},\ldots,\ell_{N})}{V(1,\ldots,N)} s_{\ell}.\]
The equality above  is to be understood in the following distributional sense: for every smooth test function $f$ on $\U(N)$, one has $f(I_{N})=\sum_{\ell\in \ZN} \frac{V(\ell_{1},\ldots,\ell_{N})}{V(1,\ldots,N)} \int_{\U(N)} f(U) \overline{s_{\ell}(U)}\; dU$, the last integral being with respect to the normalised Haar measure.

For each $\ell\in \ZN$, the function $s_{\ell}$ is an eigenfunction of the Laplace operator $\Delta$ on $\U(N)$, which for our present purposes can conveniently be defined as twice the generator of the Brownian motion defined by \eqref{edsBrownien}. The corresponding eigenvalue is the non-positive real $-c_{2}(\ell)$, where
\begin{equation}\label{casimir}
c_{2}(\ell)=\frac{1}{N}\sum_{i=1}^{N}\left(\ell_{i}-\frac{N-1}{2}\right)^{2} - \frac{N^{2}-1}{12}.
\end{equation}
For each $t>0$, the Fourier series of the density of the heat kernel measure on $\U(N)$ at time $t$ (which is the distribution of $U_t$) can be obtained formally by applying the heat operator $e^{\frac{t}{2}\Delta}$ to the Fourier series of the Dirac mass at the identity. As shown for example in \cite[Thm 4.4 (a)]{Lia04}, the resulting series is the correct one and it is normally convergent. Thus, for all $U\in \U(N)$, we have
\begin{equation}\label{Fourierp}
p_{N,t}(U)= \sum_{\ell\in \ZN}e^{-\frac{t}{2}c_{2}(\ell)} \frac{V(\ell_{1},\ldots,\ell_{N})}{V(1,\ldots,N)} s_{\ell}(U).
\end{equation}


We can now define the Brownian bridge as follow. Choose a real $T>0$. The Brownian bridge of length $T$ on the unitary group is the unique pathwise continuous stochastic process $(B_{t})_{t\in [0,T]}$ on $\U(N)$ such that for all  $n\geq 1$, all reals $0=t_{0}< t_{1}< \ldots< t_{n}< t_{n+1}=T$ and all continuous function $f:\U(N)^{n}\to \R$, 
\[\E[f(B_{t_{1}},\ldots,B_{t_{n}})]=\frac{1}{Z_{N,T}}\int_{\U(N)^{n}}f(U_{1},\ldots,U_{n})\prod_{i=1}^{n+1} p_{N,t_{i}-t_{i-1}}(U_{i-1}^{-1}U_{i}) \; dU_{1}\ldots dU_{n},\]
with the convention $U_{0}=U_{n+1}=I_{N}$, and where we have set
\[Z_{N,T}=\int_{\U(N)^{n}}\prod_{i=1}^{n+1} p_{N,t_{i}-t_{i-1}}(U_{i-1}^{-1}U_{i}) \; dU_{1}\ldots dU_{n}.\]
This normalisation constant, or partition function, can be expressed at least in two ways: using the convolution property of the heat semigroup, it is equal to
\begin{equation}\label{defZp}
Z_{N,T}=p_{N,T}(I_{N}).
\end{equation}
Then, using \eqref{Fourierp}, we find that it can also be written as
\begin{equation}\label{defZl}
Z_{N,T}=\sum_{\ell\in \ZN}e^{-\frac{T}{2}c_{2}(\ell)} \frac{V(\ell_{1},\ldots,\ell_{N})^{2}}{V(1,\ldots,N)^{2}}.
\end{equation}

The number $Z_{N,T}$ is our main quantity of technical interest: the Douglas-Kazakov phase transition expresses a failure of smothness with respect to $T$ of its properly normalised limit as $N$ tends to infinity (see Theorem \ref{PT3}). Corresponding to the two descriptions of $Z_{N,T}$, there are two points of view on the phase transition, and two intuitive understandings of it.

The first point of view, which was that of Douglas and Kazakov in \cite{DouKaz93}, is based on the expression \eqref{defZl}, and is focused on the behaviour as $N$ tends to infinity of the positive measure on $\ZN$ of which $Z_{N,T}$ is the total mass. Since this is also the point of view which we adopt in the next sections, we will not dwell on it now. Let us simply summarise the main idea: when $N$ is large, the sum is dominated by a few terms which correspond to certain vectors $\ell$ which are close to an optimal vector, the ``shape'' of which depends on $T$ and undergoes a non-smooth change when $T$ crosses the critical value $\pi^{2}$. Incidentally, the reason why Douglas and Kazakov were interested in this quantity is that $Z_{N,T}$ is also the partition function of the 2-dimensional pure Euclidean Yang-Mills theory with structure group $\U(N)$ on a sphere of total area $T$. 

The other point of view is that of the ``collective field theory'', as Gross and Matytsin call it in \cite{GrossMatytsin}. It is more closely related to \eqref{defZp}, and is concerned with the behaviour of the eigenvalues of the Brownian bridge. Because this is not the point of view on which our technical approach is based, and because we find the phenomenon of transition in the behaviour of the eigenvalues particularly striking, we will now give a little more details about it in the two following short subsections.

\subsection{The eigenvalues of the Brownian motion}
Before discussing the eigenvalues of the Brownian bridge, it is appropriate to review the more classical and easier case of the Brownian motion. We want to understand the global behaviour of the spectrum of a unitary matrix picked under the distribution of $U_t$. 
We therefore denote by \[\hat \mu_{U}=\frac{1}{N}\sum_{i=1}^{N} \delta_{z_{i}}\] the empirical spectral measure of a unitary matrix $U$ with eigenvalues $z_{1},\ldots,z_{N}$. This is a probability measure on the unit circle $\UC$  of the complex plane.

To tackle the asymptotic behaviour of the law of $ \hat \mu_{U}$ under the heat kernel measure $p_{N,t}$, one could hope to mimic the strategy presented in Section \ref{Wigner} for the Hermitian case.
The function $p_{N,t}$ is invariant by conjugation on $\U(N)$, so that its value at a unitary matrix $U$ can, in principle, be expressed as a function of $\hat \mu_{U}$. Unfortunately, this is far from being as simple a function as the function $\mu\mapsto \exp(-N^{2} \J^{t}(\mu))$ which arose in the Hermitian case.

To be clear, an analysis of the distribution of the eigenvalues of the unitary Brownian motion analogous to the one which we reviewed in the Hermitian case is still out of reach at the time of writing.
A large deviation principle somewhat similar to Theorem \ref{Alice} is very likely to hold, but just to identify its rate function is still an open problem. Let us nevertheless mention the
partial results in this direction obtained, under the form of dynamical upper bounds, by Cabanal-Duvillard and Guionnet in \cite{CabDuvGui}.

While no large deviation principle is known, there exists a law of large numbers: the asymptotic distribution of the eigenvalues was computed by Biane in \cite{Biane} using moment techniques. To state his results, we use the notation $\tr$ for the normalised trace on $M_{N}(\C)$, the one such that $\tr(I_{N})=1$. Given a continuous function $f$ on $\UC$, we also use the notation $f(U)$ for the result of the application on $U$ of the functional calculus determined by $f$. Thus, $\tr f(U)=\int_{\U} f \; d\hat\mu_{U}$.

\begin{theorem}[\cite{Biane}] For all $t\geq 0$, there exists a probability measure $\nu_{t}$ on $\UC$ such that, for all continuous function $f:\UC\to \R$, one has the convergence in probability
\[\tr f(U_{t}) \build{\longrightarrow}_{N\to \infty}^{P} \int_{\UC} f \; d\nu_{t}.\]
The measure $\nu_{t}$ is determined by the fact that for all complex number $z$ close enough to $0$, 
\[\int_{\UC} \frac{1}{1-\frac{z}{z+1}e^{tz+\frac{t}{2}}\xi}\; d\nu_{t}(\xi)=1+z.\]
\end{theorem}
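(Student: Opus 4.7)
The natural approach is the moment method, applied directly to the SDE \eqref{edsBrownien} via Itô calculus. Since the unit circle $\UC$ is compact, the collection $\{\xi\mapsto \xi^{n}\}_{n\in\Z}$ is dense in $C(\UC)$ after adjoining complex conjugates, so it suffices to identify the limits of $m_{n}^{N}(t):=\E[\tr(U_{t}^{n})]$ for every $n\in\Z$ and to show that the fluctuations $\tr(U_{t}^{n})-m_{n}^{N}(t)$ vanish in $L^{2}$ as $N\to\infty$. The claim about convergence in probability for every continuous $f$ then follows by the Stone-Weierstrass theorem and a standard tightness/approximation argument (probability measures on a compact set form a weakly compact set).

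The first step is to use Itô's formula together with the structure of the scalar product \eqref{prodscal} to derive an evolution equation for $m_{n}^{N}(t)$. A direct computation with $dU_{t}=iU_{t}\,dX_{t}-\frac{1}{2}U_{t}\,dt$ yields, for $n\geq 1$, the hierarchy
\[
\frac{d}{dt}m_{n}^{N}(t)=-\frac{n}{2}m_{n}^{N}(t)-\frac{n}{2}\sum_{k=1}^{n-1}\E\bigl[\tr(U_{t}^{k})\tr(U_{t}^{n-k})\bigr]+\frac{n}{2N^{2}}\sum_{k=1}^{n-1}k\,m_{n}^{N}(t)\cdot(\text{correction}),
\]
where the last term comes from the cross-contribution between the two factors of the normalised trace in the Itô correction (this is where the factor $\frac{1}{N^{2}}$ in \eqref{prodscal} produces the asymptotic factorisation). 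The key point is that the correction terms are $O(1/N^{2})$, so the system closes in the limit $N\to\infty$: the limiting moments $\phi_{n}(t)=\lim_{N\to\infty}m_{n}^{N}(t)$, whose existence will be justified at the same time, satisfy a closed nonlinear ODE system. A parallel computation on $\E[\tr(U_{t}^{n})\tr(U_{t}^{m})]-m_{n}^{N}(t)m_{m}^{N}(t)$ produces a closed linear system for covariances with source terms of order $1/N^{2}$, proving by a Gronwall argument that these covariances are $O(1/N^{2})$ uniformly on compact time intervals; this is the required concentration.

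The next step is to encode the limiting hierarchy into a single functional equation. Introduce the generating series $\Psi(z,t)=\sum_{n\geq 1}\phi_{n}(t)z^{n}$ (and similarly for negative $n$, using the symmetry $\phi_{-n}(t)=\overline{\phi_{n}(t)}=\phi_{n}(t)$ since $\nu_{t}$ will turn out to be invariant under complex conjugation for each $t$). The ODE hierarchy translates into a first order quasi-linear PDE for $\Psi$, which can be integrated by the method of characteristics: the characteristic curves are explicit exponentials in $t$, and carrying the boundary data $\Psi(z,0)=z/(1-z)$ (corresponding to $\nu_{0}=\delta_{1}$) along them produces an implicit relation that, after a change of variable, is precisely the announced identity. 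The formulation given in the theorem, involving $\int(1-\frac{z}{z+1}e^{tz+t/2}\xi)^{-1}d\nu_{t}(\xi)=1+z$ for $z$ near $0$, is essentially a moment generating function identity, and it uniquely determines all moments $\phi_{n}(t)$ by expanding in powers of $z$.

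The main obstacle is twofold. First, one must verify that the limit moments $\phi_{n}(t)$ are indeed moments of a bona fide probability measure on $\UC$; this is a Hausdorff moment problem on a compact set, and the cleanest way is to check non-negativity of the relevant Toeplitz determinants, which follows from the fact that each $m_{n}^{N}(t)$ is itself a moment of the probability law of $\hat\mu_{U_{t}}$. Second, the solution of the characteristic equation requires care near the critical value $t=4$, where $\nu_{t}$ ceases to have full support on $\UC$ and the implicit equation develops branches; however, the theorem as stated only asks for small $z$, so these global issues can be side-stepped in the proof of existence and uniqueness of $\nu_{t}$.
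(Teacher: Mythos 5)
This theorem is not proved in the paper; it is quoted from Biane \cite{Biane}, with the paper noting only that Biane obtained it ``using moment techniques.'' Your proposal does indeed follow the moment method that Biane used, and the overall architecture (It\^o calculus on the SDE \eqref{edsBrownien}, a hierarchy of moment ODEs, concentration of $\tr(U_t^n)$, and a generating-function / characteristics integration yielding the implicit equation for $\nu_t$) is the right one. So you are on the same track as the cited reference.

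There is, however, a concrete error in the displayed hierarchy. With the scalar product \eqref{prodscal} one has $dX\,A\,dX=\tr(A)\,I_N\,dt$ exactly, and carrying out the matrix It\^o formula on $U_t^n$ and taking the normalised trace gives
\[
\frac{d}{dt}\,\E\bigl[\tr(U_t^n)\bigr] \;=\; -\frac{n}{2}\,\E\bigl[\tr(U_t^n)\bigr]\;-\;\frac{n}{2}\sum_{k=1}^{n-1}\E\bigl[\tr(U_t^{k})\,\tr(U_t^{n-k})\bigr],
\]
with \emph{no} additional $O(1/N^{2})$ drift term: the product of traces appears exactly, at finite $N$. Your third term with the factor $n/(2N^2)$ is spurious, and the accompanying explanation that ``the correction terms are $O(1/N^2)$, so the system closes'' confuses the source of the $N^{-2}$. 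The $N^{-2}$ enters through the quadratic variation of the martingale part $in\,\tr(U^n_t\,dX_t)$, which has order $N^{-2}$, and this is what makes $\mathrm{Var}(\tr(U^n_t))=O(N^{-2})$ and therefore $\E[\tr\,\tr]=\E[\tr]\,\E[\tr]+O(N^{-2})$. Phrased correctly, the hierarchy is exact and non-closed; it \emph{closes in the limit} only after one establishes the concentration estimate, which you do mention separately. Once this is straightened out, the rest of your sketch (the Burgers-type PDE for the generating series, the characteristics, the initial condition $\Psi(z,0)=z/(1-z)$, the Hausdorff moment argument via Toeplitz positivity, and the remark that the singularity at $t=4$ is irrelevant because the claimed identity only concerns small $z$) matches the strategy of Biane's proof.
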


This description of the measure $\nu_{t}$ is rather indirect, but using it one can determine, among other things, the support of $\nu_{t}$. For each $t\in [0,4]$, the support of $\nu_{t}$ is the interval
\[\left\{e^{i\theta} : |\theta|\leq \frac{1}{2}\int_{0}^{t} \sqrt{\frac{4-s}{s}} \; ds\right\}.\]
For $t>4$, the support of $\nu_{t}$ is the full circle $\UC$. Moreover, for every $t>0$, the measure $\nu_{t}$ is absolutely continuous with respect to the uniform measure on $\UC$, and for every $t\neq 4$, this density is smooth and positive on the interior of the support of $\nu_{t}$. The density of $\nu_{4}$ has a singularity at $e^{i\pi}$ and is smooth and positive on $\UC\setminus\{e^{i\pi}\}$.

For the understanding of the Douglas-Kazakov phase transition, the most important fact is that the support of the limiting distribution $\nu_{t}$ is equal to $\{1\}$ when $t=0$, then grows symmetrically to become the full circle $\UC$ at $t=4$, and then stays equal to $\UC$. In other words, the choice which we made at the beginning for the scalar product on $\H_{N}$ (see the beginning of Section \ref{sec:partitionfunction}), and which determines the speed of the Brownian motion on $\U(N)$, is exactly such that we can see, at a macroscopic scale of time, the eigenvalues of $U_{t}$ progressively invade the whole unit circle.

It is interesting to note that, although several quantities change qualitatively when $t$ crosses the value $4$, for instance the decay of the moments of the measure $\nu_{t}$, 
which is polynomial for $t\leq 4$ and exponential for $t>4$, there does not seem to be a phase transition at $t=4$ in the same sense as there is for the Brownian bridge.

\subsection{The eigenvalues of the Brownian bridge} Let us now turn to the behaviour of the eigenvalues of the Brownian bridge, the study of which is distinctly harder than in the Brownian case.

It is possible to write down explicit expressions for the moments of the expected empirical spectral measure of the random matrix $B_{t}$, but to this day we have not been able to study their asymptotic behaviour directly. The analogue of Biane's result was obtained recently by Liechty and Wang in the formidable paper \cite{LiechtyWang}, using the point of view of determinantal processes. In particular, they prove the existence of a measure which plays for the Brownian bridge the role played for the Brownian motion by the measure $\nu_{t}$, and they give some  description of this measure. Note that this measure depends not only on the time $t$ at which the Brownian bridge is observed, but also on the lifetime $T$ of the bridge. Fortunately, we need not enter the details of Liechty and Wang's very technical solution to understand at least at a heuristic level why the phase transition predicted by Douglas and Kazakov occurs.

During the evolution of the Brownian bridge $(B_{t})_{t\in [0,T]}$, each eigenvalue starts from $1$ at time $0$, wanders in the unit circle and finally comes back to $1$ at time $T$. Collectively, the gas of eigenvalues undergoes an expansion on the unit circle, followed by a contraction. Since the processes $(B_{t})_{t\in [0,T]}$ and $(B_{T-t})_{t\in [0,T]}$ have the same distribution, we expect the evolution of this gas to be symmetric with respect to the time $\frac{T}{2}$, and its maximal expansion  to be reached precisely at $t=\frac{T}{2}$. 

Now suppose that $T$ is small. Then the time $\frac{T}{2}$ is too small for the eigenvalues to reach the other end of the unit circle, and the support of the limiting distribution of the eigenvalues does never expand enough to reach the point $e^{i\pi}$, which is the furthest from their starting point $1$.

If on the contrary $T$ is large, much larger than $4$, then the bridge
will see its eigenvalues fill the unit circle almost as quickly as those of the Brownian motion (although we would expect the influence of the conditioning to slow this expansion slightly). Hence, at $\frac{T}{2}$, and some time before and some time after, the support of the limiting distribution of the eigenvalues of $B_{t}$ fills the unit circle.

According to this informal discussion, we expect a transition between two regimes to occur for a critical lifetime $T$ of the bridge which is slightly larger than $8$. The discovery of Douglas and Kazakov, proved by Liechty and Wang, is that the transition occurs at $T=\pi^{2}$.

We will now give another proof of the existence of the phase transition, which is closer in spirit to the original point of view of Douglas and Kazakov, and which is logically independent of Liechty and Wang's work. Nevertheless, as mentioned in the introduction, Liechty and Wang's work gave us a crucial impulsion by showing that the minimisation which is at the core of Douglas and Kazakov's statement can indeed be solved rigorously.
 
\section{The shape of the dominant representation}\label{sec:representation}

In this section, we undertake the analysis of the partition function $Z_{N,T}$. We show that the computation of the free energy amounts to the resolution of a minimisation problem of the same kind as the one which we solved in the case of Wigner matrices, but with an additional constraint, of which we explain the nature and the origin. We then explain how to solve this problem, and we solve it. The computation of the free energy and the proof Theorem \ref{PT3} will be presented in Section \ref{sec:transition}.

\subsection{A discrete Gaussian integral}
Let us start from the expression \eqref{defZl} of the partition function. Using \eqref{casimir}, we find
\begin{align*}
Z_{N,T}&=\sum_{\ell\in \ZN}e^{-\frac{T}{2}c_{2}(\ell)} \frac{V(\ell_{1},\ldots,\ell_{N})^{2}}{V(1,\ldots,N)^{2}}\\
&=\frac{e^{\frac{T}{24}(N^{2}-1)}}{V(1,\ldots,N)^{2}} \sum_{\ell\in \ZN}e^{-\frac{T}{2N} \sum_{i=1}^N(\ell_i -\frac{N-1}{2})^2} V(\ell_{1},\ldots,\ell_{N})^{2},
\end{align*}
an expression very similar to the one which we obtained at the beginning of Section \ref{sec:GI}, and which indeed can be regarded as a discrete Gaussian integral. 

Since $V(\ell_{1},\ldots,\ell_{N})=V(\ell_{1}-\frac{N-1}{2},\ldots,\ell_{N}-\frac{N-1}{2})$, the last sum is easily expressed as a function of the empirical measure
\[\hat\mu_{\ell}=\frac{1}{N}\sum_{i=1}^{N} \delta_{\frac{1}{N}(\ell_i-\frac{N-1}{2})}.\]
We find
\begin{equation}\label{ZFourier}
Z_{N,T}=\frac{e^{\frac{T}{24}(N^{2}-1)+N(N-1)\log N}}{V(1,\ldots,N)^{2}} \sum_{\ell\in \ZN} e^{-N^{2} \J_T(\hat\mu_{\ell})},
\end{equation}
where we have set, for every Borel probability measure on $\R$,
\begin{equation}\label{defIstar}
\J_{T}(\mu)=\iint_{\{(x,y)\in \R^{2} : x\neq y\}} -\log |x-y| \; d\mu(x)d\mu(y) + \frac{T}{2} \int_{\R} x^{2} d\mu(x).
\end{equation}

Quite wonderfully, the functional $\J_{T}$ is none other than $\J^{\frac{1}{T}}$, as defined by \eqref{defJ}. Given this  coincidence, which of course is a reflection of a deeper correspondence between the eigenvalues of unitary matrices on one side and the integer vectors which index the Schur functions on the other side, the passage from $T$ to $\frac{1}{T}$ is the natural consequence of our working in Fourier space.

Recall that our main objective is to compute the free energy 
\[F(T)=\lim_{N\to\infty} \frac{1}{N^{2}}\log Z_{N,T}.\]
In the scale where we are working, the contribution of the prefactor of \eqref{ZFourier} is easy to compute. Indeed, a simple computation shows  that
\begin{equation}\label{prefacteur}
\frac{1}{N^{2}}\log \left(\frac{e^{\frac{T}{24}(N^{2}-1)+N(N-1)\log N}}{V(1,\ldots,N)^{2}}\right)=\frac{T}{24}+\frac{3}{2}+O\left(\frac{\log N}{N}\right).
\end{equation}
We will henceforward focus on the second term,
\begin{equation}\label{sommelog}
\frac{1}{N^{2}}\log \sum_{\ell\in \ZN} e^{-N^{2} \J_{T}(\hat\mu_{\ell})}.
\end{equation}

 

\subsection{Another large deviation principle}

Just as in our investigation of Wigner matrices, we expect the sum \eqref{sommelog} to be dominated by the terms corresponding to those $\ell$'s for which $\hat\mu_{\ell}$ minimises $\J_{T}$. Accordingly, we expect (correctly as we will see) this phenomenon of concentration to give rise to a large deviation principle, and we might also expect (incorrectly as it turns out) that the simple relation 
\begin{equation}\label{Finsature}
F(T)=\frac{T}{24}+\frac{3}{2}-\I_{Q_{T}}(\sigma_{1/T})=\frac{T}{24}+\frac{3}{4}-\frac{1}{2}\log T
\end{equation}
holds, where $Q_{T}(x)=\frac{T}{4}x^{2}$, the functional $\I_{Q_{T}}$ is defined by \eqref{defIQ}, and 
 $\sigma_{1/T}$ is the minimiser defined by the right-hand side of \eqref{sigmat}.

Let us start by stating the  large deviation principle. We will then explain why \eqref{Finsature} is not true, or at least not for all values of $T$.

As always, the set $\Prob(\R)$ of Borel probability measures on $\R$ is endowed with the weak topology and the corresponding Borel $\sigma$-field. Another set of probability measures on $\R$ appears in the statement, namely the set
\begin{equation}\label{defsetL}
\L(\R)=\{\mu \in \Prob(\R) : \forall a,b\in \R, a<b, \mu([a,b])\leq b-a\}
\end{equation}
of Borel probability measures on $\R$ which are absolutely continuous with respect to the Lebesgue measure, with a density not greater than $1$. 

The following result can be viewed as a ``discrete analogue'' of Theorem \ref{Alice}. 

\begin{theorem}[{\cite[Thm 2]{GuiMai05}}] Let $T>0.$ For each $N\geq 1$, let $\Pi_{N}$ be the Borel measure on $\Prob(\R)$ defined by
\[\Pi_{N}=\sum_{\ell\in \ZN} e^{-N^{2} \J_{T}(\hat\mu_{\ell})} \delta_{\hat\mu_{\ell}}.\]
The sequence of measures $(\Pi_{N})_{N\geq 1}$ satisfies on $\Prob(\R)$ a large deviation principle in the scale $N^{2}$ with good rate function $\widetilde \I_{Q_{T}}$ defined as follows:
\[\forall \mu \in \Prob(\R), \; \widetilde\I_{Q_{T}}(\mu)=\left\{\begin{array}{ll} \I_{Q_{T}}(\mu)& \mbox{if } \mu \in \L(\R),\\ +\infty & \mbox{otherwise}.
\end{array}\right.\]
\end{theorem}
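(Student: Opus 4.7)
The plan is to follow the standard two-step scheme for large deviation principles of Coulomb-gas type: establish exponential tightness, prove an upper bound on closed sets and a matching lower bound on open sets. Since the functional $\J_T$ is, up to the change of parameter $t=1/T$, exactly the functional $\J^t$ encountered at the beginning of Section \ref{sec:GI}, the analysis runs parallel to the proof of Theorem \ref{Alice}, and I would import wholesale the standard regularisation machinery used in \cite{BenArousGuionnet} and \cite[Ch.~2]{AndGuiZei10}: truncation of the logarithmic kernel, weak lower semi-continuity of $\I_{Q_T}$, compactness of its level sets, strict convexity, and so on. The genuine novelty is the appearance of the constraint $\mu \in \L(\R)$, which I will argue comes directly from the discreteness of the indexing set $\ZN$.

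Exponential tightness at scale $N^{2}$ is immediate from the quadratic confinement $\frac{T}{2}\int x^{2} d\hat\mu_\ell$ in $\J_T$, since the number of $\ell \in \ZN$ with $\|\ell\|_\infty \leq KN$ is only $\binom{2KN+1}{N}$, which is negligible at scale $e^{N^{2}}$. The upper bound on closed sets is then obtained by a covering argument applied to the lower semi-continuous envelope $\J_T^{M}$ obtained by replacing $-\log|x-y|$ by $(-\log|x-y|)\wedge M$, followed by the standard $M\to+\infty$ passage. The point that forces the rate function to equal $+\infty$ outside $\L(\R)$ is the following elementary remark: since $\ell_{1}>\ldots>\ell_{N}$ are integers, the atoms of $\hat\mu_\ell$ live on the lattice $\frac{1}{N}\Z-\frac{N-1}{2N}$ with spacing at least $\frac{1}{N}$, so for every interval $[a,b]$ one has $\hat\mu_\ell([a,b])\leq (b-a)+\frac{1}{N}$. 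Any weak accumulation point of the sequence $(\hat\mu_{\ell^{(N)}})$ therefore satisfies $\mu([a,b])\leq b-a$ for all $a<b$, which by a Portmanteau-style argument gives $\mu\in\L(\R)$. Since $\L(\R)$ is weakly closed, the measures $\Pi_N$ themselves are supported in its arbitrarily small weak enlargements, and the upper bound automatically inherits the constraint.

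The real work lies in the lower bound: given $\mu\in\L(\R)$ with $\I_{Q_T}(\mu)<+\infty$ and an open neighbourhood $\mathcal O$ of $\mu$, I construct $\ell^{(N)}\in\ZN$ with $\hat\mu_{\ell^{(N)}}\in\mathcal O$ for $N$ large and $\limsup_N \J_T(\hat\mu_{\ell^{(N)}})\leq \I_{Q_T}(\mu)$. The idea is to place the $\ell_i^{(N)}$ near the quantiles of $\mu$. Writing $F_\mu^{-1}$ for the quantile function of $\mu$, the condition that the density is bounded by $1$ translates exactly into the $1$-expanding property $F_\mu^{-1}(s)-F_\mu^{-1}(s')\geq s-s'$ for $s>s'$, so the points $x_i^{(N)} = F_\mu^{-1}\!\bigl(\frac{N-i+1/2}{N}\bigr)$ are spaced by at least $\frac{1}{N}$ and can be rounded, after translation by $\frac{N-1}{2N}$ and rescaling by $N$, to pairwise distinct integers $\ell_i^{(N)}$. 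The main obstacle is then to show that $\J_T(\hat\mu_{\ell^{(N)}})\to \I_{Q_T}(\mu)$ in spite of the explicit exclusion of the diagonal in $\J_T$. I would handle this first for measures $\mu$ with a continuous density bounded above strictly below $1$ and supported in a compact interval, where direct pairwise estimates on $\frac{1}{N^2}\sum_{i\neq j}-\log|x_i^{(N)}-x_j^{(N)}|$ yield the convergence by a Riemann-sum argument that isolates and controls the nearly-diagonal contributions. The general case then follows by approximating $\mu \in \L(\R)$ with finite $\I_{Q_T}$ by such regular measures, using continuity of $\I_{Q_T}$ along approximations with uniformly bounded energy (as in \cite[Lemma I.6.10]{SafTot97}), and concluding by a diagonal extraction.
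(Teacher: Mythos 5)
The paper quotes this theorem directly from \cite{GuiMai05} without reproducing a proof, so there is no in-paper argument to compare yours against. Your reconstruction is a plausible rendering of the standard proof for discrete Coulomb-gas LDPs, and it contains the two correct key observations on which any proof must hinge: the $1/N$ lattice spacing of the atoms of $\hat\mu_\ell$ forces $\hat\mu_\ell([a,b]) \leq (b-a) + \tfrac{1}{N}$, which propagates into the constraint $\mu \in \L(\R)$ in the rate function; and dually the $1$-expanding property of the quantile map of a measure with density at most $1$ is precisely what lets you place the lower-bound atoms on an admissible lattice configuration. Two points deserve care. First, to guarantee that the integers $\ell_i^{(N)}$ are pairwise distinct after rounding, you should round by taking floors rather than nearest integers (nearest-integer rounding can merge two points whose rescaled gap is exactly $1$). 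Second, the final approximation step (replacing a general $\mu \in \L(\R)$ of finite energy by one with continuous density bounded strictly below $1$ and compact support, while controlling $\I_{Q_T}$ along the way) is the technically delicate heart of the lower bound and would need an explicit construction; the Saff--Totik reference you cite is not really the right tool for this, and the actual argument resembles the outer-regularisation and lower-envelope lemmas used in \cite{BenArousGuionnet} and \cite{AndGuiZei10} for the continuous case, adapted to preserve the density constraint. These are fixable details, and the overall architecture matches what one expects to find in \cite{GuiMai05}.
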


\subsection{A problem of minimisation under constraint}

The fact that the rate function $\widetilde\I_{Q_{T}}$ of this large deviation principle is equal to $+\infty$ outside $\L(\R)$ indicates that the measures $\hat \mu_{\ell}$ do not explore the whole set $\Prob(\R)$. This is not surprising if we realise that the integer vectors $\ell$ are submitted to the constraint that their components must be pairwise distinct. Indeed, this constraint puts an upper bound to the concentration of mass which the measures $\hat \mu_{\ell}$ can achieve, and we have
\[\bigcap_{p\geq 1} \overline{\bigcup_{N\geq p} \{\hat\mu_{\ell} : \ell \in \ZN\}}=\L(\R).\]

Since $\L(\R)$ is a closed subset of $\Prob(\R)$, the same arguments which we used after the definition of the functional $\J^{t}$ (see \eqref{defJ}) imply that the functional $\I_{Q_{T}}$ attains its infimum on $\L(\R)$ at a unique element of $\L(\R)$, which we shall denote by $\mu^{*}_{T}$. A precise statement of this fact can be found in \cite[Thm 2.1]{DraSaf97}, and is stated below, for the convenience of the reader, as Theorem \ref{theo:minaveccontrainte}.

The existence and uniqueness of $\mu^{*}_{T}$, combined with the large deviation principle and with \eqref{prefacteur}, yields the following result, which is the key to our computation of the free energy.

\begin{proposition}\label{limZ}
For all $T>0$, one has
\[F(T)=\lim_{N \to \infty}  \frac{1}{N^2}\log  Z_{N,T} = \frac{T}{24} + \frac{3}{2} - \inf\{\I_{Q_{T}}(\mu): \mu \in \L(\R)\}=\frac{T}{24} + \frac{3}{2} - \I_{Q_{T}}(\mu^{*}_{T}).\]
\end{proposition}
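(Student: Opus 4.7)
The plan is to combine three ingredients already assembled in the text: the Fourier-side identity \eqref{ZFourier}, the prefactor asymptotic \eqref{prefacteur}, and the large deviation principle for $(\Pi_N)_{N\geq 1}$ just stated. Taking logarithms in \eqref{ZFourier} and dividing by $N^2$, I would first write
\[
\frac{1}{N^{2}}\log Z_{N,T} = \frac{1}{N^{2}}\log\left(\frac{e^{\frac{T}{24}(N^{2}-1)+N(N-1)\log N}}{V(1,\ldots,N)^{2}}\right) + \frac{1}{N^{2}}\log \Pi_{N}(\Prob(\R)),
\]
since by the very definition of $\Pi_N$ one has $\Pi_N(\Prob(\R))=\sum_{\ell\in\ZN}e^{-N^{2}\J_T(\hat\mu_\ell)}$. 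The first summand tends to $\frac{T}{24}+\frac{3}{2}$ by \eqref{prefacteur}, so the whole task reduces to identifying the limit of the second.

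For that, the crucial observation is that $\Prob(\R)$ is simultaneously open and closed in itself, so both halves of the LDP apply to it. The upper bound on closed sets yields
\[
\limsup_{N\to\infty} \frac{1}{N^{2}}\log \Pi_{N}(\Prob(\R))\leq -\inf_{\Prob(\R)}\widetilde\I_{Q_T},
\]
while the lower bound on open sets gives the reverse inequality. Hence the limit exists and equals $-\inf_{\Prob(\R)}\widetilde\I_{Q_T}$, which by the very definition of $\widetilde\I_{Q_T}$ coincides with $-\inf_{\L(\R)}\I_{Q_T}$.

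To finish, I would invoke the forthcoming Theorem \ref{theo:minaveccontrainte} (Dragnev--Saff), which asserts that $\I_{Q_T}$ attains its infimum on $\L(\R)$ at a unique element $\mu^*_T$. Substituting $\inf_{\L(\R)}\I_{Q_T}=\I_{Q_T}(\mu^*_T)$ and adding the three contributions produces the stated formula for $F(T)$. No step is genuinely hard; the only mildly delicate point is to make sure that the LDP statement of \cite{GuiMai05} really entitles one to apply both bounds to the entire space despite the $\Pi_N$ being Borel measures rather than probability measures. Once this convention is settled, the argument is essentially a single application of the upper and lower LDP bounds combined with \eqref{prefacteur}.
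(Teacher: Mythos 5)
Your proposal is correct and amounts to the same argument the paper has in mind; the paper itself simply asserts that the proposition follows from the LDP, the prefactor asymptotic \eqref{prefacteur}, and the existence and uniqueness of $\mu^*_T$ (from Theorem~\ref{theo:minaveccontrainte}), without spelling out the clopen-set step or the observation that $\Pi_N(\Prob(\R))$ is exactly the sum in \eqref{ZFourier}. Your remark about applying both LDP bounds to the whole space, and your caveat about the $\Pi_N$ being unnormalised Borel measures, are exactly the two points one needs to articulate to turn the paper's one-line justification into a complete proof.
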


Now, the reason why \eqref{Finsature} is not true for all $T$ is easy to explain. The problem is that $\sigma_{1/T}$, which is the minimiser of $\I_{Q_{T}}$ over the whole space $\Prob(\R)$, does not belong to $\L(\R)$ for all $T$. In fact, a very simple computation shows that $\sigma_{1/T}$ belongs to $\L(\R)$ exactly when $T\leq \pi^{2}$. In this case, $\mu^{*}_{T}=\sigma_{1/T}$ and the equality \eqref{Finsature} is true, giving the correct value of the free energy.

For $T>\pi^{2}$ on the other hand, the absolute minimiser of $\I_{Q_{T}}$ over $\Prob(\R)$ does not belong to $\L(\R)$. We are thus facing a minimisation problem of a new kind, which is the following.

\begin{problem} Fix $T>\pi^{2}$. Set $Q_{T}(x)=\frac{T}{4}x^{2}$. Find $\mu^{*}_{T}$, the unique minimiser  on the set $\L(\R)$ (defined by \eqref{defsetL}) of the functional $\I_{Q_{T}}$ (defined by \eqref{defIQ}).
\end{problem}

The resolution of this problem will occupy us almost until the end of this paper. Until the end of Section \ref{sec:representation}, we always assume that $T>\pi^{2}.$

\subsection{Strategy, tools, and outline of the results} 

Let us start by quoting the result which ensures the existence and uniqueness of the constrained minimiser $\mu^{*}_{T}$. This result is a version of Theorem \ref{theo:minsanscontrainte} adapted to problems of minimisation under constraint. It is proved in a paper of Dragnev and Saff \cite{DraSaf97}, where the authors give the first systematic study of minimisation problems under constraint. 

\begin{theorem}[{\cite[Thm 2.1]{DraSaf97}}]\label{theo:minaveccontrainte}
Let $\Sigma$ be a closed subset of $\R$ of positive capacity. Let $Q : \Sigma \rightarrow \R$ be a lower semi-continuous function such that $\lim_{|x| \rightarrow +\infty}(Q(x) -\log|x|)= +\infty$. Let $\sigma$ be a Borel measure on $\Sigma$ such that $\Supp(\sigma)=\Sigma$ and $\sigma(\Sigma)>1$. Assume that the restriction of $\sigma$ to every compact set has finite logarithmic energy. 

Then the infimum of the functional $\I_Q$ over the set $\{\mu \in \Prob(\Sigma) : \mu\leq \sigma\}$ is finite and it is reached at a unique probability measure $\mu^{*},$ which is  compactly supported. 

Moreover, consider a compactly supported probability measure $\mu$ on $\Sigma$ such that $\mu\leq \sigma$. Then $\mu=\mu^{*}$ if and only if there exists a constant $F_Q$ such that
\begin{equation}
\tag{EL$_c$}\label{ELc}
\left\{\begin{array}{ll}
U^{\mu}+Q \leq F_{Q} &  \mu \mbox{ almost everywhere (a.e.)} \\
U^{\mu}+Q \geq F_{Q} & \sigma-\mu \mbox{ a.e.}
\end{array}\right.
\end{equation}

\end{theorem}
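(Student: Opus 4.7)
The plan is to mirror the unconstrained argument of Theorem \ref{theo:minsanscontrainte}, inserting at each step the extra information supplied by the upper constraint $\mu\leq\sigma$. The admissible set $\K=\{\mu\in\Prob(\Sigma):\mu\leq\sigma\}$ is convex and closed in the weak topology, and the hypothesis $\sigma(\Sigma)>1$ guarantees that it is non-empty. The functional $\I_Q$ is well-defined on $\K$ with values in $(-\infty,+\infty]$ thanks to the same trick of bounding $-\log|x-y|$ from below by a function of $Q(x)+Q(y)$ (using $\lim_{|x|\to\infty}(Q(x)-\log|x|)=+\infty$); moreover, this growth forces any minimising sequence to be tight, and not identically $+\infty$ since the local finiteness of the logarithmic energy of $\sigma$ provides at least one admissible $\mu$ with $\I_Q(\mu)<\infty$. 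Lower semi-continuity of $\I_Q$ on $\Prob(\Sigma)$ then yields the existence of a minimiser $\mu^{*}\in\K$, and a standard truncation argument using the coercivity of $Q$ gives that $\mu^{*}$ has compact support. Uniqueness comes from the strict convexity of $\mu\mapsto\iint-\log|x-y|\,d\mu(x)d\mu(y)$ on the convex cone of signed measures with compact support and finite logarithmic energy, a fact already used in the paragraph following Theorem \ref{Alice}.

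Next I would derive the Euler--Lagrange inequalities \eqref{ELc}. The key observation is that admissible perturbations of $\mu^{*}$ inside $\K$ preserving the total mass $1$ must be of the form $\mu^{*}+\varepsilon(\nu_{1}-\nu_{2})$ with $\nu_{1}\leq\sigma-\mu^{*}$, $\nu_{2}\leq\mu^{*}$ and $\nu_{1}(\Sigma)=\nu_{2}(\Sigma)$, both having compact support and finite logarithmic energy. Computing
\[
\frac{d}{d\varepsilon}\bigg|_{\varepsilon=0}\I_{Q}(\mu^{*}+\varepsilon(\nu_{1}-\nu_{2}))=2\int (U^{\mu^{*}}+Q)\,d(\nu_{1}-\nu_{2})\geq 0
\]
for all such $\nu_{1},\nu_{2}$ forces the existence of a real number $F_{Q}$ such that $U^{\mu^{*}}+Q\leq F_{Q}$ holds $\mu^{*}$-a.e.\ and $U^{\mu^{*}}+Q\geq F_{Q}$ holds $(\sigma-\mu^{*})$-a.e., by a standard separation argument (take $F_{Q}$ to be the infimum of $U^{\mu^{*}}+Q$ on $\Supp(\sigma-\mu^{*})$ and its supremum on $\Supp(\mu^{*})$, and show both agree).

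For the converse, suppose $\mu\in\K$ is compactly supported with finite logarithmic energy and satisfies \eqref{ELc}. Given any other $\nu\in\K$ with $\I_{Q}(\nu)<\infty$, decompose $\nu-\mu=(\nu-\mu)_{+}-(\nu-\mu)_{-}$; from $\nu\leq\sigma$ and $\nu\geq 0$ one gets $(\nu-\mu)_{+}\leq\sigma-\mu$ and $(\nu-\mu)_{-}\leq\mu$. Hence the two inequalities in \eqref{ELc} and $(\nu-\mu)(\Sigma)=0$ yield
\[
\int (U^{\mu}+Q-F_{Q})\,d(\nu-\mu)\geq 0.
\]
Combined with the identity
\[
\I_{Q}(\nu)-\I_{Q}(\mu)=2\int(U^{\mu}+Q-F_{Q})\,d(\nu-\mu)+\mathcal{E}(\nu-\mu),
\]
this gives $\I_{Q}(\nu)\geq \I_{Q}(\mu)$, so $\mu=\mu^{*}$ by uniqueness.

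The main obstacle I anticipate is the positivity $\mathcal{E}(\nu-\mu)\geq 0$ for signed measures of total mass zero and finite logarithmic energy, together with the justification that the perturbations used in both directions do carry finite logarithmic energy. The first point is classical, following from a Fourier or potential-theoretic representation of $\mathcal{E}$ as a squared Hilbert norm on the relevant space of signed measures; the second is handled by restricting attention to measures with compactly supported, bounded densities, which is enough to separate cases in the variational argument and to approximate arbitrary admissible competitors by density.
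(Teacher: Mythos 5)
The paper provides no proof of this theorem: it is stated verbatim as a citation of Dragnev and Saff \cite[Thm 2.1]{DraSaf97}, so there is no ``paper proof'' to compare against. What you have written is a reconstruction of the Dragnev--Saff argument itself, and in outline it is the right one: existence of a minimiser by lower semi-continuity, coercivity, and tightness of minimising sequences; uniqueness by strict convexity of the logarithmic energy on zero-mean signed measures; the Euler--Lagrange inequalities by a one-sided first-variation argument with perturbations $\nu_1 - \nu_2$ taken from the cone of directions admissible under the constraint; and the converse by the quadratic expansion $\I_Q(\nu) - \I_Q(\mu) = 2\int (U^\mu + Q - F_Q)\,d(\nu - \mu) + \mathcal{E}(\nu - \mu)$ together with $\mathcal{E}(\nu-\mu) \ge 0$. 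The structure is sound and the step you single out as the main obstacle (positivity of $\mathcal{E}$ on compactly supported zero-total-mass signed measures of finite energy, plus density of regular competitors) is indeed where the genuine technical content lives, and it is classical.

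Two points should be stated more carefully if this were to be turned into a full proof. First, the constant $F_Q$ should be extracted by comparing the essential supremum of $U^{\mu^*}+Q$ with respect to $\mu^*$ against the essential infimum with respect to $\sigma-\mu^*$; the variational inequality forces the former to be no larger than the latter, and $F_Q$ is any value in between. Requiring that the infimum on $\Supp(\sigma-\mu^*)$ and the supremum on $\Supp(\mu^*)$ ``agree'' is neither what one needs nor what one can generally prove, and taking plain suprema on supports rather than essential bounds with respect to the measures is not quite the right object. Second, the converse direction tacitly requires $\mu$ to have finite logarithmic energy (as you correctly wrote, but as the restatement in the paper omits); otherwise the identity for $\I_Q(\nu)-\I_Q(\mu)$ is not meaningful. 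Your proof already guards against this, which is the correct reading of the theorem.
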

The measure $\mu^{*}$ is called the constrained weighted equilibrium measure on $\Sigma$ in the external field $Q$ and with constraint $\sigma$.

Apart from a slight variation in the technical assumptions, the main difference between this result and Theorem \ref{theo:minsanscontrainte} lies in the characterisation of the minimising measure by means of its potential. This new characterisation can be paraphrased by saying that $U^{\mu}+Q$ is equal to $F_{Q}$ on the part of the support of $\mu$ where the constraint $\mu\leq \sigma$ is not saturated, smaller than $F_{Q}$ on the set where the constraint $\mu\leq \sigma$ is  saturated, and larger than $F_{Q}$ outside the support of $\mu$.

In comparison with the case without constraint, the new phenomenon is that on an interval where $\mu$ puts as much mass as it is allowed to, namely the mass given by $\sigma$, it would in general have been energetically more efficient to put even more mass. On such an interval, $U^{\mu}+Q$ is smaller than $F_{Q}$ and, should the constraint be released, some of the mass of $\mu$ initially located outside this interval would migrate into it (see Figure \ref{potsat} for an explicit example).

We shall often refer to the characterisation of the constrained weighted equilibrium measure in terms of its potential, given by the system \eqref{ELc}, as the Euler-Lagrange formulation of the minimisation problem.

Let us turn to our specific problem. Considering the parity of the external potential $Q_{T}$ and of the constraint, which is the Lebesgue measure, considering the relative position of the densities of the absolute minimiser $\sigma_{1/T}$ and the Lebesgue measure, and using the electrostatic intuitive formulation of the problem, in which the constraint can be understood as the specification of a finite electric capacitance of the set on which the measures live, it seems reasonable to expect that there will exist two reals $\alpha$ and $\beta$ with $0<\alpha<\beta$ such that $\mu^{*}_{T}$, the constrained minimiser, is equal to the Lebesgue measure on $[-\alpha,\alpha]$, is strictly between zero and the Lebesgue measure on $(-\beta,-\alpha)\cup (\alpha,\beta)$, and vanishes outside $(-\beta,\beta)$. This is illustrated on Figure \ref{sigmamu} below.

\begin{figure}[h!]
\begin{center}
\includegraphics{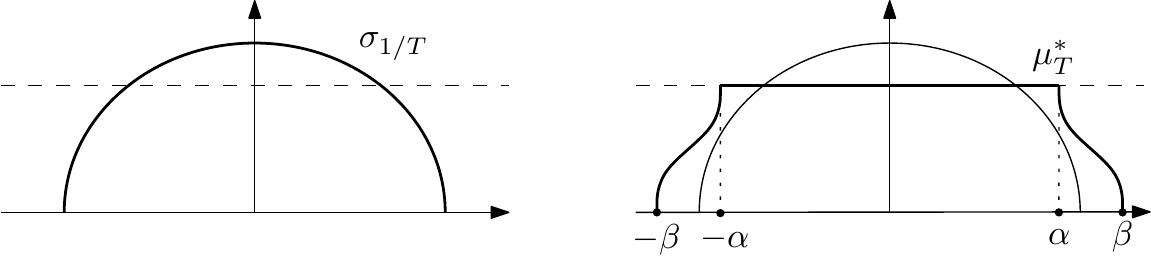}
\caption{\label{sigmamu} A schematic representation of the measures $\sigma_{1/T}$ and $\mu^{*}_{T}$.}
\end{center}
\end{figure}

According to the general principle, advertised for instance by Saff and Totik in their book \cite{SafTot97}, that the first step in the determination of a minimising measure is the determination of its support, our first task will be to formulate this guess precisely, to prove it, and to determine explicitly the values of $\alpha$ and $\beta$ in function of $T$. 

The first result which we will prove is the following. We use the notation $\Leb$ for the Lebesgue measure.

\begin{lemma}
 \label{beckermann}
There exists two reals $0< \alpha < \beta$ such that the measure $\mu^{*}_{T}$ satisfies
\[\Supp (\mu^{*}_{T})=[-\beta,\beta] \ \ \mbox{ and } \ \ \Supp ({\rm Leb}-\mu^{*}_{T})=\R\setminus (-\alpha,\alpha).\]
\end{lemma}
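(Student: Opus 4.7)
\medskip
\noindent\emph{Plan.}
My plan is to combine a symmetry reduction with the Euler-Lagrange system \eqref{ELc} of Theorem \ref{theo:minaveccontrainte}, exploiting that the external field $Q_{T}(x)=\tfrac{T}{4}x^{2}$ is strictly convex and even and that the constraint $\Leb$ is translation-invariant. Since the reflection $x\mapsto -x$ preserves $\L(\R)$ and leaves $\I_{Q_{T}}$ invariant, the uniqueness statement in Theorem \ref{theo:minaveccontrainte} forces $\mu^{*}_{T}$ to be symmetric about $0$; in particular both sets to be identified are symmetric. A constrained analogue of Theorem \ref{supportconvexe}, applicable because $Q_{T}$ is strictly convex on all of $\R$ and $\Leb$ has convex support, then ensures that $\Supp(\mu^{*}_{T})$ is a single interval. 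Together with symmetry and the compactness of the support given by Theorem \ref{theo:minaveccontrainte}, this gives $\Supp(\mu^{*}_{T})=[-\beta,\beta]$ for some $\beta>0$.

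\smallskip
\noindent\emph{The saturation region; main obstacle.}
Set $h(x)=F_{Q_{T}}-U^{\mu^{*}_{T}}(x)-Q_{T}(x)$. The system \eqref{ELc} yields $h\geq 0$ on the \emph{saturation set} $\{d\mu^{*}_{T}/dx=1\}$, $h\leq 0$ on $\Supp(\Leb-\mu^{*}_{T})$, and hence $h=0$ on the ``band'' where $0<d\mu^{*}_{T}/dx<1$. Because $\mu^{*}_{T}$ has a density bounded by $1$ with compact support, $U^{\mu^{*}_{T}}$ and therefore $h$ are continuous on $\R$, and the complement of $\Supp(\Leb-\mu^{*}_{T})$ is exactly the interior of $\{h\geq 0\}$. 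The real task is thus to show that this interior is a single symmetric open interval $(-\alpha,\alpha)$. This is the heart of the lemma and is the point for which B.~Beckermann's help is acknowledged. My strategy would be to argue by contradiction: if the saturation region were disconnected, then between two of its symmetric components there would sit a maximal band interval $J$, on which the identity $U^{\mu^{*}_{T}}+Q_{T}=F_{Q_{T}}$ holds and can be differentiated into a singular integral equation \emph{\`a la} \eqref{PVeqWigner} for the density of $\mu^{*}_{T}$. The two boundary values of this density on $J$ are forced by continuity to equal $1$, and I would combine this Plemelj-type boundary value problem with the strict convexity of $Q_{T}$ and the global normalisation $\mu^{*}_{T}(\R)=1$ to derive a contradiction. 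This comparison between the explicit band equation and the sign conditions imposed by \eqref{ELc} across the saturation components is the delicate technical core.

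\smallskip
\noindent\emph{The strict inequalities $0<\alpha<\beta$.}
If $\alpha=0$, then $\Supp(\Leb-\mu^{*}_{T})=\R$, so by \eqref{ELc} one has $U^{\mu^{*}_{T}}+Q_{T}\geq F_{Q_{T}}$ on all of $\R$ and $\leq F_{Q_{T}}$ $\mu^{*}_{T}$-a.e., i.e.\ $\mu^{*}_{T}$ satisfies the unconstrained system \eqref{EL}. Theorem \ref{theo:minsanscontrainte} then forces $\mu^{*}_{T}=\sigma_{1/T}$, but the density of $\sigma_{1/T}$ at the origin, read off from \eqref{sigmat}, equals $\sqrt{T}/\pi$, which is strictly greater than $1$ as soon as $T>\pi^{2}$, contradicting $\mu^{*}_{T}\in\L(\R)$. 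If instead $\alpha=\beta$, then $\mu^{*}_{T}=\Leb|_{[-\beta,\beta]}$ and the total-mass condition imposes $\beta=\tfrac12$. A direct computation gives, for $x>\tfrac12$,
\[
(U^{\Leb|_{[-1/2,1/2]}})'(x)=-\log\!\left(\frac{x+\tfrac12}{x-\tfrac12}\right)\build{\longrightarrow}_{x\to 1/2^{+}}^{}-\infty,
\]
so $U^{\mu^{*}_{T}}+Q_{T}$ decreases strictly below its boundary value immediately outside $[-\tfrac12,\tfrac12]$. By continuity this boundary value equals $F_{Q_{T}}$, so the inequality $U^{\mu^{*}_{T}}+Q_{T}\geq F_{Q_{T}}$ on $\Supp(\Leb-\mu^{*}_{T})$ fails just to the right of $\tfrac12$, contradicting \eqref{ELc}. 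Hence $0<\alpha<\beta$, which completes the proof.
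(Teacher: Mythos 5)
Your proof gets the easy parts right and misses the hard part. The symmetry argument (Step 0 in the paper) and the application of Theorem \ref{supportconvexecontrainte} to conclude that $\Supp(\mu^{*}_{T})$ is an interval, hence $[-\beta,\beta]$ by symmetry, are both correct and actually slightly more direct than the paper's route. Your arguments at the end that $\alpha>0$ (otherwise the constraint would be inactive and $\mu^{*}_{T}=\sigma_{1/T}$, whose density at the origin is $\sqrt{T}/\pi>1$) and that $\alpha<\beta$ (via the logarithmic singularity of $(U^{\Leb|_{[-1/2,1/2]}})'$ at the endpoint) are also correct and match Step 5 of the paper.

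The genuine gap is the assertion that the saturation set is a single symmetric interval $[-\alpha,\alpha]$ --- which you yourself identify as ``the heart of the lemma'' --- and for which you give only a plan (``My strategy would be to argue by contradiction \ldots'') rather than an argument. The sketch you give (differentiate the equality on a hypothetical gap interval $J$ into a singular integral equation of type \eqref{PVeqWigner}, impose continuity of the density to $1$ at the endpoints of $J$, and hope strict convexity of $Q_{T}$ gives a contradiction) is not carried through and is not at all the route the paper takes. Moreover, there is a subtle error in your framing: the complement of $\Supp(\Leb-\mu^{*}_{T})$ is \emph{not} the interior of $\{h\ge0\}$, since $h$ vanishes identically on band intervals where $0<d\mu^{*}_{T}/dx<1$, which are nevertheless contained in $\Supp(\Leb-\mu^{*}_{T})$. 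The paper handles this step by a two-ingredient device you do not use at all: (i) fold the problem onto $\R_{+}$ via the squaring map $x\mapsto x^{2}$, which \emph{linearises} the external potential to $q_{T}(u)=\tfrac{T}{2}u$ and turns the constraint into $\kappa=du/\sqrt{u}$, making $0$ a hard boundary; and (ii) apply the Kuijlaars--McLaughlin duality, rewriting the Euler--Lagrange system for $\nu^{*}_{T}$ as the Euler--Lagrange system for the dual measure $\nu_{*}=\frac{1}{c}(\kappa_{M}-\nu^{*}_{T})$, whose external potential is $\frac{1}{c}(-U^{\kappa_{M}}-q_{T})$. Because $q_{T}$ is \emph{linear} after folding, convexity of this dual potential reduces to concavity of $U^{\kappa_{M}}$, which the paper verifies by direct computation; Theorem \ref{supportconvexecontrainte} applied to the dual problem then forces $\Supp(\nu_{*})$ to be an interval containing $M$, i.e.\ $[a,M]$. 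Note that this duality step would \emph{not} go through if one skipped the folding: on $\R$ the dual potential is $\frac{1}{c}(-U^{\Leb|_{[-M,M]}}-Q_{T})$, whose second derivative is $\tfrac{2M}{M^{2}-x^{2}}-\tfrac{T}{2}$, which changes sign, so the convexity theorem does not apply globally. This is precisely why the squaring map is essential, and why the paper credits Beckermann for it. Your proposal, lacking both the folding and the duality, does not furnish a proof of the central assertion.
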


The proof of this lemma involves three ingredients. The first was suggested to us by Bernd Beckermann. It consists in exploiting the symmetry of the problem and transforming it into a problem on $\R^{+}$ via the squaring map. The second ingredient is a duality argument, which the form of the statement suggests a posteriori, and which consists in studying $\mu^{*}_{T}$ and $\Leb-\mu^{*}_{T}$ on the same footing. We learned this technique in a paper of Kuijlaars and Mc Laughlin (\cite{KuijlaarsMcLaughlin}, see also \cite[Corollary 2.10]{DraSaf97}). The third ingredient is the following version of Theorem \ref{supportconvexe} for minimisation problems under constraint, which yields convexity properties of the support.

\begin{theorem}[{\cite[Thm 2.16 (a)]{DraSaf97}}]\label{supportconvexecontrainte} Under the assumptions of Theorem \ref{theo:minaveccontrainte}, the intersection of the support of $\mu^{*}$ with any interval contained in $\Sigma$ and on which $Q$ is convex is an interval.
\end{theorem}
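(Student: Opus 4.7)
The strategy combines three ingredients: the symmetry $x\mapsto -x$ of the problem, Beckermann's reduction via the squaring map $x\mapsto x^{2}$, and a duality argument between $\mu^{*}_{T}$ and $\Leb-\mu^{*}_{T}$, each leveraged through Theorem~\ref{supportconvexecontrainte}.

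First, both $Q_{T}$ and the Lebesgue constraint are invariant under $x\mapsto -x$, so the uniqueness part of Theorem~\ref{theo:minaveccontrainte} forces $\mu^{*}_{T}$ to be symmetric; since $Q_{T}$ is convex on $\R$, Theorem~\ref{supportconvexecontrainte} applied with $I=\R$ yields $\Supp(\mu^{*}_{T})=[-\beta,\beta]$ for some $\beta>0$. To reduce the problem to $\R_{+}$, I push $\mu^{*}_{T}$ forward by $\phi:x\mapsto x^{2}$. A direct change of variable shows that, for symmetric probability measures on $[-\beta,\beta]$ with density $f$, the functional $\I_{Q_{T}}$ equals, up to a positive multiplicative constant and an additive constant, the weighted logarithmic energy on $\R_{+}$ with convex external potential $\tilde Q(u)=\frac{T}{2}u$ and with constraint $d\sigma(u)=du/\sqrt{u}$; the density bound $f\leq 1$ translates into $g(u)\leq 1/\sqrt{u}$ for the density $g$ of $\phi_{*}(f\,dx)$. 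Hence $\tilde\mu:=\phi_{*}\mu^{*}_{T}$ is the constrained equilibrium measure on $\R_{+}$ for the problem $(\tilde Q,\sigma)$, with support $[0,\beta^{2}]$, and the lemma reduces to showing that the saturation set $A:=\{g=1/\sqrt{u}\}$ of $\tilde\mu$ is an interval of the form $[0,\alpha^{2})$ with $0<\alpha<\beta$.

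To handle $A$, I apply the duality argument of~\cite{KuijlaarsMcLaughlin}. Assuming for the moment $2\beta>1$, set $\tilde\nu:=(2\beta-1)^{-1}(\sigma|_{[0,\beta^{2}]}-\tilde\mu)$, which is a probability measure on $[0,\beta^{2}]$. Inserting the identity $\Up{\tilde\mu}=\Up{\sigma|_{[0,\beta^{2}]}}-(2\beta-1)\Up{\tilde\nu}$ into the Euler-Lagrange characterisation~\eqref{ELc} of $\tilde\mu$ and rearranging the inequalities identifies $\tilde\nu$ as the constrained equilibrium measure on $\Sigma=[0,\beta^{2}]$ for the external potential $Q_{\nu}(u):=-(\Up{\sigma|_{[0,\beta^{2}]}}(u)+\tilde Q(u))/(2\beta-1)$ and the constraint $(2\beta-1)^{-1}\sigma|_{[0,\beta^{2}]}$; under this correspondence, $\Supp(\tilde\nu)$ is the closure in $[0,\beta^{2}]$ of the complement of $A$. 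To apply Theorem~\ref{supportconvexecontrainte} to this dual problem I must verify that $Q_{\nu}$ is convex on $(0,\beta^{2})$, i.e.\ that $\Up{\sigma|_{[0,\beta^{2}]}}$ is concave there. After the substitution $v=w^{2}$ in the defining integral one obtains the closed-form expression $(\Up{\sigma|_{[0,\beta^{2}]}})'(u)=u^{-1/2}\log\frac{\beta-\sqrt{u}}{\beta+\sqrt{u}}$; a direct differentiation reduces the concavity to the inequality $(1-s^{2})\log\frac{1+s}{1-s}<2s$ for $s=\sqrt{u}/\beta\in(0,1)$, which is elementary. Theorem~\ref{supportconvexecontrainte} then gives that $\Supp(\tilde\nu)\cap(0,\beta^{2})$ is an interval $[a_{1},a_{2}]$.

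Finally, the two endpoints are pinned down separately. The equality $a_{2}=\beta^{2}$ (which also justifies $2\beta>1$) follows from boundary regularity: if the density of $\tilde\mu$ equalled $1/\sqrt{u}$ all the way up to $\beta^{2}$, the left derivative of $\Up{\tilde\mu}+\tilde Q$ at $\beta^{2}$ would diverge to $-\infty$, forcing $\Up{\tilde\mu}+\tilde Q$ to exceed its boundary value just inside the support in contradiction with~\eqref{ELc}. The strict inequality $a_{1}>0$ uses the assumption $T>\pi^{2}$: if $A$ had empty interior, $\tilde\mu$ would satisfy the unconstrained Euler-Lagrange equations on the whole of $[0,\beta^{2}]$ and therefore coincide with $\phi_{*}\sigma_{1/T}$; but the density $\sqrt{T}/\pi$ of $\sigma_{1/T}$ at the origin is strictly larger than $1$ when $T>\pi^{2}$, so this pushforward would violate the constraint $g\leq 1/\sqrt{u}$ near the origin. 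Thus $A=[0,a_{1})$ and, setting $\alpha:=\sqrt{a_{1}}$ and pulling back through $\phi$, one obtains $\Supp(\Leb-\mu^{*}_{T})=\R\setminus(-\alpha,\alpha)$ with $0<\alpha<\beta$. The hardest step is the passage from the Euler-Lagrange system for $\tilde\mu$ to that for $\tilde\nu$ combined with the explicit concavity check for $\Up{\sigma|_{[0,\beta^{2}]}}$.
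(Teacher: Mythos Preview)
Your proposal does not prove the stated theorem. Theorem~\ref{supportconvexecontrainte} is a general structural fact about constrained equilibrium measures---that the support meets every interval of convexity of $Q$ in an interval---which the paper does not prove at all but quotes from \cite[Thm~2.16(a)]{DraSaf97} as a tool. What you have written is instead a proof of Lemma~\ref{beckermann}, in which Theorem~\ref{supportconvexecontrainte} is invoked repeatedly as a black box. The two statements are logically related in the opposite direction from the one you assume: the theorem is an input to the lemma, not the other way around. A proof of Theorem~\ref{supportconvexecontrainte} itself would have to argue, for an arbitrary constraint $\sigma$ and an arbitrary convex $Q$, why no gap can occur in $\Supp(\mu^{*})$; nothing in your write-up addresses this.

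If your intention was actually to prove Lemma~\ref{beckermann}, then your argument follows essentially the same route as the paper (symmetry, Beckermann's squaring map, Kuijlaars--McLaughlin duality, concavity of the logarithmic potential of the pushed-forward constraint, then Theorem~\ref{supportconvexecontrainte} on the dual problem), with a few variations worth noting. You extract $\Supp(\mu^{*}_{T})=[-\beta,\beta]$ by applying Theorem~\ref{supportconvexecontrainte} directly on $\R$ before squaring, whereas the paper squares first; this is fine in spirit but the results of \cite{DraSaf97} are stated for finite constraints, which is why the paper is careful to restrict to a compact interval $[0,M]$ before invoking them. By taking $M=\beta^{2}$ rather than a strictly larger $M$, you are forced into an extra boundary argument to pin $a_{2}=\beta^{2}$, which the paper sidesteps entirely by choosing $M$ outside $\Supp(\nu^{*}_{T})$. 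Your concavity check via the elementary inequality $(1-s^{2})\log\frac{1+s}{1-s}<2s$ is equivalent to, and somewhat cleaner than, the paper's computation of $\frac{d}{dx}\big(x^{3/2}(U^{\kappa_{M}})''(x)\big)$. Finally, your argument that the saturation set is nonempty (``otherwise $\tilde\mu=\phi_{*}\sigma_{1/T}$, which violates the constraint for $T>\pi^{2}$'') differs from the paper's Step~5, which instead rules out $\alpha=\beta$ by computing the potential of $\Leb_{|[-1/2,1/2]}$ explicitly; both endpoint arguments are valid.
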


Let us emphasize one of the great strengths of this theorem, which is not to depend on any assumption on the constraining measure $\sigma$.

The actual determination of $\alpha$ and $\beta$ is based on the use of the Mhaskar-Saff functional (see Theorem \ref{theo:MS}). It requires a certain amount of computation of integrals, which turn out to be elliptic integrals. The formulation of the result requires the definition of the complete elliptic integrals of the first and second kind, which are the two functions of a parameter $k\in (0,1)$ defined by

\begin{equation}\label{defEK}
K(k)=\int_{0}^{1}\frac{ds}{\sqrt{(1-s^{2})(1-k^{2}s^{2})}} \ \ \mbox{ and } \ \ E(k)=\int_{0}^{1}\sqrt{\frac{1-k^{2}s^{2}}{1-s^{2}}} \; ds.
\end{equation}
There is a tradition, which we follow, not to write the dependence of $K(k)$ and $E(k)$ on $k$, and to write simply $K$ and $E$.

\begin{lemma}\label{MSab} The two reals $\alpha$ and $\beta$ of which Lemma \ref{beckermann} assures the existence are uniquely determined, as functions of $T$, by the relations
\[\beta=\frac{1}{2E-(1-k^{2})K} \mbox{ and } \alpha=k\beta,\]
where $E$ and $K$ are the complete elliptic integrals defined by \eqref{defEK}, and $k$ is the unique element of $(0,1)$ such that
\[T=8EK-4(1-k^{2})K^{2}.\]
\end{lemma}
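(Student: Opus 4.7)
The plan is to exploit the symmetry of the problem and the squaring trick mentioned before the statement, reducing to a one-sided problem on $[0,\infty)$ which can be solved explicitly via the Plemelj-Sokhotskyi formulas \eqref{Gpv}--\eqref{Gpsi}. By Lemma \ref{beckermann} and the symmetry of both the Lebesgue constraint and the external field $Q_T$, the measure $\mu^{*}_{T}$ has an even density $\phi$ equal to $1$ on $[-\alpha,\alpha]$, vanishing outside $[-\beta,\beta]$, and strictly between $0$ and $1$ on $(-\beta,-\alpha)\cup(\alpha,\beta)$. On this non-saturated set, the Euler-Lagrange system \eqref{ELc} gives $U^{\mu^{*}_{T}}(x)+\frac{T}{4}x^{2}=F_Q$, which after differentiation via \eqref{derUPV} becomes the singular integral equation $\PVint_{-\beta}^{\beta}\phi(y)/(x-y)\,dy=\frac{T}{2}x$.

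Applying the change of variable $u=x^{2}$ pushes $\mu^{*}_{T}$ forward to a probability measure $\tilde\mu$ on $[0,b]$ with $b=\beta^{2}$, whose density $\tilde\phi(u)=\phi(\sqrt u)/\sqrt u$ equals $1/\sqrt u$ on $[0,a]$ (with $a=\alpha^{2}$), is strictly smaller than $1/\sqrt u$ on $(a,b)$, and satisfies a transformed singular integral equation of the form $\PVint_{0}^{b}\tilde\phi(v)/(u-v)\,dv = T/2$ on $(a,b)$. I would then proceed in the spirit of Section \ref{sec:findmin}: introduce the Stieltjes transform $G=\Gp{\tilde\phi}$ and the auxiliary $R(z)=\sqrt{(z-a)(z-b)}$, analytic on $\C\setminus[a,b]$ and positive on $(b,\infty)$. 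The product $H(z)=G(z)R(z)$ has computable jumps across all of $[0,b]$---coming from the saturated value $1/\sqrt u$ on $(0,a)$ and from the EL equation on $(a,b)$---which, combined with the normalization $G(z)\sim 1/z$ at infinity, yield the explicit representation
\[H(z)=1+\int_{0}^{a}\frac{\sqrt{(a-u)(b-u)}}{(u-z)\sqrt u}\,du + \frac{T}{2\pi}\int_{a}^{b}\frac{\sqrt{(b-u)(u-a)}}{u-z}\,du.\]

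Since $\tilde\phi$ is expected to be bounded at $a^{+}$ (matching the saturation value $1/\sqrt a$) and to vanish at $b^{-}$ (soft edge of the support), the quotient $H/R=G$ must be finite at $z=a$ and $z=b$, forcing the two conditions $H(a)=0$ and $H(b)=0$. Using the substitutions $u=aw^{2}$ on $(0,a)$ and $u=a+(b-a)s^{2}$ on $(a,b)$ and introducing the modulus $k=\sqrt{a/b}=\alpha/\beta$, the resulting elliptic integrals evaluate to $\int_{0}^{a}\sqrt{(b-u)/(a-u)}/\sqrt u\,du = 2\sqrt b\,E$, $\int_{a}^{b}\sqrt{(b-u)/(u-a)}\,du = \pi(b-a)/2$, and, using the identity $\int_{0}^{1}\sqrt{(1-w^{2})/(1-k^{2}w^{2})}\,dw = [E-(1-k^{2})K]/k^{2}$, $\int_{0}^{a}\sqrt{(a-u)/(b-u)}/\sqrt u\,du = 2\sqrt b\,[E-(1-k^{2})K]$. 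Solving the resulting linear system in $\sqrt b$ and $T(b-a)$ gives directly $\beta=\sqrt b = 1/(2E-(1-k^{2})K)$, and substitution then yields $T = 8EK - 4(1-k^{2})K^{2}$.

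For uniqueness of $k$ I would check that $\tau\colon k\mapsto 8EK-4(1-k^{2})K^{2}$ is a strictly increasing bijection from $(0,1)$ onto $(\pi^{2},+\infty)$: at $k=0$ one has $E=K=\pi/2$ giving $\tau(0)=\pi^{2}$; as $k\to 1^{-}$, $K$ diverges logarithmically while $(1-k^{2})K^{2}\to 0$, so $\tau(k)\to+\infty$; monotonicity follows from the classical derivative formulas $dE/dk=(E-K)/k$ and $dK/dk=[E-(1-k^{2})K]/(k(1-k^{2}))$. The main technical difficulty I anticipate is the elliptic integral computation itself---in particular the non-obvious identity for $\int_{0}^{1}\sqrt{(1-w^{2})/(1-k^{2}w^{2})}\,dw$---after which the collapse of $H(a)=0$ and $H(b)=0$ into the strikingly clean pair of formulas in the lemma is a pleasing but slightly miraculous piece of algebra.
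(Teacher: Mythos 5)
Your computations are correct and they do land on the stated formulas: the two edge conditions, after the elliptic substitutions, read $1-2\sqrt{b}\,E+\tfrac{T}{4}(b-a)=0$ and $1-2\sqrt{b}\,(E-(1-k^2)K)-\tfrac{T}{4}(b-a)=0$; adding them gives $\beta=\sqrt b=1/(2E-(1-k^2)K)$ and subtracting gives $T\beta=4K$, hence $T=8EK-4(1-k^2)K^2$, and your boundary analysis of $\tau(k)$ is right. But your route is not the paper's, and as a proof of the lemma it has a genuine gap. Everything hinges on unestablished properties of $\mu^*_T$: that it has a H\"older continuous density on the non-saturated region (needed to pass from the Euler--Lagrange identity to the singular integral equation via \eqref{derUPV} and to invoke Plemelj--Sokhotskyi), and, crucially, that this density stays \emph{bounded} as $x\downarrow\alpha$ and \emph{vanishes} as $x\uparrow\beta$. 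These last two facts are exactly what turns the underdetermined Riemann--Hilbert problem into the two conditions $H(a)=H(b)=0$. A priori nothing excludes an inverse-square-root blow-up at $\beta$ (as happens for the unconstrained equilibrium measure of an interval) or a discontinuous detachment from the constraint at $\alpha$; you flag this yourself with the word ``expected''. Lemma \ref{beckermann} gives only the two supports, not the density or its edge behaviour, so your argument identifies a candidate pair $(\alpha,\beta)$ rather than proving it is the right one. This is precisely the kind of reasoning the paper confines to its italicized ``derivation'' sections.

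The paper's actual proof avoids any ansatz on the density. It uses Proposition \ref{levecontrainte} to recast the restriction of $\mu^*_T$ to $J(\alpha,\beta)$ as an unconstrained weighted equilibrium measure, then applies the Mhaskar--Saff characterisation (Theorem \ref{theo:MS}): the support minimises an explicitly computable functional of $(a,b)$, built from the classical equilibrium measure of $[-b,-a]\cup[a,b]$. Differentiating in $b$ gives one equation; since $a=\alpha$ sits on the boundary of the admissible set $\Sigma_\alpha$, the second equation is obtained by the Kuijlaars--McLaughlin duality ($\eta_*\propto\Leb_{|[-\beta,\beta]}-\mu^*_T$ is itself an equilibrium measure for a dual problem on $[-\beta,\beta]$). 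The two first-order conditions are then the same pair of relations you found. If you want to salvage your approach, you must close the loop: take the explicit candidate produced by your derivation and verify the full system \eqref{ELc} for it directly (including the inequalities on $[-\alpha,\alpha]$ and outside $[-\beta,\beta]$), which is essentially the content of the paper's later verification sections --- except that there the verification leans on Lemma \ref{MSab} already being established.
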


We also refer the reader to Figure \ref{figdensite}.\\

Once $\alpha$ and $\beta$ are determined, there remains to compute the restriction of $\mu^{*}_{T}$ to the set $J(\alpha,\beta)=[-\beta,-\alpha]\cup [\alpha,\beta]$. For this, we use the following fact, which is a consequence of the result stated as Corollary 2.9 in \cite{DraSaf97}.

\begin{proposition}\label{levecontrainte} The restriction of $\mu^{*}_{T}$ to $J(\alpha,\beta)=[-\beta,-\alpha]\cup [\alpha,\beta]$ is $(1-2\alpha)$ times the weighted equilibrium mesure on the set $(-\infty,-\alpha]\cup [\alpha,+\infty)$ in the external potential $Q_{T,\alpha}=\frac{1}{1-2\alpha}(Q_{T}+U^{\Leb_{|[-\alpha,\alpha]}})$.
\end{proposition}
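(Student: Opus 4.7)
The natural strategy is to reduce the constrained problem on $\R$ to an unconstrained problem on $\Sigma'=(-\infty,-\alpha]\cup[\alpha,+\infty)$ by peeling off the saturated part of $\mu^{*}_{T}$. I would first decompose $\mu^{*}_{T}=\Leb_{|[-\alpha,\alpha]}+\nu$, where $\nu=\mu^{*}_{T}-\Leb_{|[-\alpha,\alpha]}$ is a non-negative measure supported on $J(\alpha,\beta)$ with total mass $1-2\alpha$. Setting $\tilde\nu=\nu/(1-2\alpha)$ yields a probability measure of compact support and bounded density on $\Sigma'$, and the claim becomes the statement that $\tilde\nu$ is the weighted equilibrium measure on $\Sigma'$ in the external field $Q_{T,\alpha}$. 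By Theorem \ref{theo:minsanscontrainte}, it suffices to verify the Euler-Lagrange system \eqref{EL} for $\tilde\nu$, after first checking the standing hypotheses: $\Sigma'$ contains an interval hence has positive capacity, $\tilde\nu$ has finite logarithmic energy (it has bounded density and compact support), $Q_{T,\alpha}$ is continuous, and the expansion $U^{\Leb_{|[-\alpha,\alpha]}}(x)\sim -2\alpha\log|x|$ at infinity together with the quadratic growth of $Q_{T}$ ensures that $Q_{T,\alpha}(x)-\log|x|\to+\infty$.

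The key input is Lemma \ref{beckermann}, which pins down both supports so that their overlap is precisely $J(\alpha,\beta)$. Since $\mu^{*}_{T}$ has density bounded by $1$ with compact support, its logarithmic potential $U^{\mu^{*}_{T}}$ is continuous, hence so is $U^{\mu^{*}_{T}}+Q_{T}$. Applying \eqref{ELc} with $\sigma=\Leb$ and promoting the almost-everywhere statements to pointwise ones by continuity, one obtains $U^{\mu^{*}_{T}}+Q_{T}\leq F_{Q_{T}}$ on $\Supp(\mu^{*}_{T})=[-\beta,\beta]$ and $U^{\mu^{*}_{T}}+Q_{T}\geq F_{Q_{T}}$ on $\Supp(\Leb-\mu^{*}_{T})=\R\setminus(-\alpha,\alpha)$. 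Since these two regions meet on $J(\alpha,\beta)$, both inequalities hold there, forcing equality $U^{\mu^{*}_{T}}+Q_{T}=F_{Q_{T}}$ on $J(\alpha,\beta)$; moreover $U^{\mu^{*}_{T}}+Q_{T}\geq F_{Q_{T}}$ on $\R\setminus(-\beta,\beta)$.

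The concluding computation is the observation
\[
U^{\tilde\nu}+Q_{T,\alpha}=\frac{U^{\mu^{*}_{T}}-U^{\Leb_{|[-\alpha,\alpha]}}}{1-2\alpha}+\frac{Q_{T}+U^{\Leb_{|[-\alpha,\alpha]}}}{1-2\alpha}=\frac{U^{\mu^{*}_{T}}+Q_{T}}{1-2\alpha}.
\]
Combining this identity with the pointwise statements of the previous paragraph shows that $U^{\tilde\nu}+Q_{T,\alpha}$ equals the constant $F_{Q_{T}}/(1-2\alpha)$ on $\Supp(\tilde\nu)=J(\alpha,\beta)$ and is no smaller than this constant on the rest of $\Sigma'$, which is $\R\setminus(-\beta,\beta)$. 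The Euler-Lagrange conditions \eqref{EL} are thus satisfied, and the uniqueness clause of Theorem \ref{theo:minsanscontrainte} identifies $\tilde\nu$ with the weighted equilibrium measure on $\Sigma'$ in the external field $Q_{T,\alpha}$, which is exactly the content of the proposition.

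The main subtlety in this argument is the upgrade from the $\mu^{*}_{T}$-a.e.\ and $(\Leb-\mu^{*}_{T})$-a.e.\ statements of \eqref{ELc} to pointwise statements on the supports, and in particular to the pointwise equality on $J(\alpha,\beta)$. This is where Lemma \ref{beckermann} plays a decisive role: the precise identification of the two supports guarantees that neither the mass constraint nor the saturation is vacuous at any point of $J(\alpha,\beta)$, while the boundedness of the density of $\mu^{*}_{T}$ provides the required continuity of the logarithmic potential.
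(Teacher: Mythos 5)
Your proof is correct. The paper itself gives no argument for this proposition: it simply invokes Corollary~2.9 of Dragnev and Saff \cite{DraSaf97}, a general ``peeling'' result for constrained equilibrium problems, and moves on. What you have written out is, in effect, a direct proof of the specialization of that corollary to the present setting. The structure is exactly the right one: decompose $\mu^*_T=\Leb_{|[-\alpha,\alpha]}+\nu$ using Lemma~\ref{beckermann}, rescale $\nu$ into a probability measure $\tilde\nu$ on $\Sigma'$, observe the clean identity $U^{\tilde\nu}+Q_{T,\alpha}=(1-2\alpha)^{-1}(U^{\mu^*_T}+Q_T)$, and transfer the Euler--Lagrange conditions \eqref{ELc} for the constrained problem into the conditions \eqref{EL} for the unconstrained one on $\Sigma'$. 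The one place that genuinely needs care, the upgrade from a.e.\ statements to pointwise statements on the supports, you handle correctly: since the density of $\mu^*_T$ is bounded by $1$ with compact support, $U^{\mu^*_T}$ is continuous, so the sets where the two inequalities in \eqref{ELc} fail are open and $\mu^*_T$-null (resp.\ $(\Leb-\mu^*_T)$-null), hence disjoint from the corresponding supports; the two supports intersect exactly along $J(\alpha,\beta)$ by Lemma~\ref{beckermann}, forcing equality there. The trade-off between your route and the paper's is the usual one: you gain self-containedness and a transparent view of why the factor $1-2\alpha$ and the modified potential $Q_{T,\alpha}$ appear, at the price of a half-page that the paper compresses into a citation. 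One small polish you might add, though it is not a gap: when checking the hypotheses of Theorem~\ref{theo:minsanscontrainte} you should note explicitly that $1-2\alpha>0$, which follows from $\mu^*_T([-\alpha,\alpha])=2\alpha\leq 1$ together with $\alpha<\beta$ (if $\alpha=\tfrac12$ one would have $\mu^*_T=\Leb_{|[-1/2,1/2]}$ and hence $\alpha=\beta$, contradicting Lemma~\ref{beckermann}).
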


At this point, the problem has been transformed into a minimisation without constraint, on a new subset of $\R$, with a new potential. We will derive the solution of this problem using the same method which we used in Section \ref{sec:findmin}, be it to the price of more complicated computations, and we will check that the solution which we obtain is indeed the minimiser using Theorem \ref{potquesurlesupport}. 

The last step of our proof of the phase transition consists in an explicit computation of the free energy of the model, and in a direct computation of the limits on the left and on the right at $\pi^{2}$ of the third derivative of the free energy. This will be done in Section \ref{sec:transition}.

\subsection{The support of the weighted equilibrium potential}

In this section, we prove Lemma \ref{beckermann} and Lemma \ref{MSab}. 

\begin{proof}[Proof of Lemma \ref{beckermann}] 

In this proof, for the sake of simplicity, we shall denote the functional $\I_{Q_{T}}$ by $\I_{T}$.

{\em Step 0: the minimiser is even.} The fact that the potential $Q_{T}(z) = \frac{T}{4}z^2$ is even implies that the functional $\I_{T}$ takes the same value on a measure $\mu$ and on the measure $\mu^{\vee}$ which is the image of $\mu$ by the map $x\mapsto -x$. Moreover, since $\I_{T}$ is convex, this common value cannot be smaller than the value of $\I_{T}$ on the symmetric measure $\frac{1}{2}(\mu+\mu^{\vee})$. Hence, $\mu^{*}_{T}$, the minimiser of $\I_{T}$, must be a symmetric measure.

The first idea of this proof is to exploit this symmetry and to transform the problem of minimisation of $\I_{T}$ into an equivalent but simpler problem on $\R_{+}$, via the squaring map. The reason why this new problem is simpler is that $0$ acts as a ``hard'' boundary for the problem, and reduces the freedom with which measures are allowed to move. 

{\em Step 1: folding up the problem.} Let $s:\R\to \R_{+}$ denote the squaring map $x\mapsto s(x)=x^{2}$. Let us define $\nu^{*}_{T}=\mu^{*}_{T}\circ s^{-1}$, the image measure of $\mu^{*}_{T}$ by $s$. We claim that $\nu^{*}_{T}$ is the unique minimiser of the functional 
\[\I_T^{s}(\nu) =    \iint_{(\mathbb R_+)^2} -\log|u-v| \; d\nu(u) d\nu(v)+ T\int_{\mathbb R_+} u\, d\nu(u)\]
on 
\[\L^{s}=\left\{\nu \in \Prob(\R_{+}) : \forall a,b\in \R_{+}, a<b, \nu([a,b])\leq 2(\sqrt{b}-\sqrt{a}) = \int_{a}^{b}\frac{du}{\sqrt{u}}\right\}.\]
Indeed, a simple change of variables implies that for every $\mu\in \L$, the measure $\mu\circ s^{-1}$ belongs to $\L^{s}$ and, if $\mu$ is symmetric, that the equality
\[2\,\I_{T}(\mu)=\I_{T}^{s}(\mu\circ s^{-1})\]
holds. Now, every measure $\nu$ in $\L^{s}$ is the image measure by the squaring map of a unique symmetric measure $\mu$ in $\L$, and we have $\I^{s}_{T}(\nu)=2\,\I_{T}(\mu) \geq 2\, \I_{T}(\mu^{*}_{T}) =\I^{s}_{T}(\nu^{*}_{T})$.

It must be noted that the squaring map plays a special role here: the logarithmic potential behaves particularly nicely under this transformation, and the quadratic part of the functional $\I_{T}^{s}$ is still the logarithmic energy.

Let us denote by $\kappa$ the measure $du/\sqrt{u}$ on $\R_{+}$, which is the image of the Lebesgue measure by the squaring map. Proving the lemma is equivalent to proving that there exists two reals $a$ and $b$ with $0<a<b$ such that
\[\Supp(\nu^{*}_{T})=[0,b] \;\; \mbox{\textrm and} \;\; \Supp(\kappa-\nu^{*}_{T})=[a,+\infty).\]

{\em Step 2: restricting to a compact set.} We already said that our main reference for constrained energy problems is the paper of Dragnev and Saff \cite{DraSaf97}. In this paper, the authors work under the assumption that the constraint is a finite measure, an assumption which is not satisfied by the measure $\kappa$ on $\R_{+}$. Nevertheless, the proof of the existence of the minimiser (pages 243 to 246 in \cite{DraSaf97}) uses only the fact that the constraint is finite on compact subsets of $\C$, which is true for our measure $\kappa$. Moreover, as the authors explain in Remark 2.2, their proof of the existence of the minimiser shows that it has compact support. 

Thus, the arguments of \cite{DraSaf97} show that $\nu^{*}_{T}$ has compact support. Let us fix $M>0$, which may depend on $T$, such that the support of $\nu^{*}_{T}$ is a subset of $[0,M)$. Let us emphasize that, for a reason which shall become clear soon, we choose $M$ large enough that it does not belong to the support of $\nu^{*}_{T}$. Then $\nu^{*}_{T}$ is the unique minimiser of $\I^{s}_{T}$ on 
\[\L^{s}_{M}=\left\{\nu \in \Prob([0,M]) : \nu\leq \kappa_{|[0,M]}\right\}.\]
Since the measure $\kappa_{|[0,M]}$ is finite, we are now exactly in the framework of the paper of Dragnev and Saff, and we can apply their results. 

{\em Step 3: existence of $b$.}  Since the external potential $q_{T}(z) = \frac{T}{2}z$ to which $\nu^{*}_{T}$ is subjected is convex, Theorem \ref{supportconvexecontrainte}  implies that the support of $\nu^{*}_{T}$ is an interval. Since $q_T$ is increasing and the density of $\kappa$ is decreasing, this interval must contain $0$, for if it did not, a translation of $\nu^{*}_{T}$ to the left would produce another element of $\L^{s}$ achieving a strictly smaller value of $\I^{s}_{T}$. Altogether, this proves that the support of $\nu^{*}_{T}$ is of the form $[0,b]$ for some $b>0$, indeed $b\geq \frac{1}{4}$, since $\kappa([0,\frac{1}{4}])=1$.

{\em Step 4: existence of $a$.} We will now prove that the support of the difference $\kappa_{|[0,M]}-\nu^{*}_{T}$ is an interval of the form $[a,M]$, which is to say that the measure $\nu^{*}_{T}$ saturates the constraint $\kappa$ on the interval $[0,a)$. For this, we use the Euler-Lagrange formulation of the minimisation problem, that is, the characterisation of the minimising measure in terms of its potential (see Theorem \ref{theo:minaveccontrainte}), and a clever  duality argument which we borrow from Kuijlaars and Mc Laughlin (see \cite{KuijlaarsMcLaughlin}, and also \cite[Corollary 2.10]{DraSaf97}). A glance at Figure \ref{nunu} may help to memorise the definitions of the measures which we introduce.

\begin{figure}[h!]
\begin{center}
\scalebox{0.85}{\includegraphics{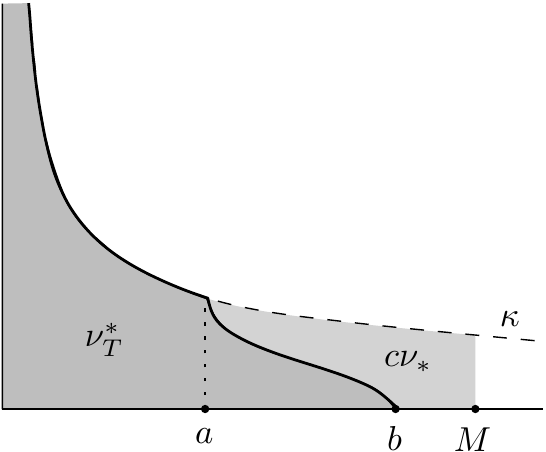}}
\caption{\label{nunu}The measures $\kappa$, $\kappa_{M}=\kappa_{|[0,M]}$, $\nu^{*}_{T}$ and $\nu_{*}=\frac{1}{c}(\kappa_{M}-\nu^{*}_{T})$, with $c=2\sqrt{M}-1$.}
\end{center}
\end{figure}

Let us denote $\kappa_{M}=\kappa_{|[0,M]}$. According to the Euler-Lagrange formulation of the minimisation problem on $[0.M]$ of which $\nu^{*}_{T}$ is the solution, there exists a constant $\ell$ such that 
\begin{equation}\tag{EL${}^{*}$}\label{ELnu}
\left\{\begin{array}{ll}
U^{\nu^{*}_{T}}+q_{T} \leq \ell & \nu^{*}_{T}  \ \textrm{a.e.} \\
U^{\nu^{*}_{T}}+q_{T} \geq \ell & \kappa_{M}-\nu^{*}_{T} \ \textrm{a.e.}
\end{array}
\right.
\end{equation}

Let us now set $c=2\sqrt{M}-1$ and define the probability measure $\nu_{*}=\frac{1}{c}(\kappa_{M}-\nu^{*}_{T})$. By merely rewriting the system \eqref{ELnu}, we see that $\nu_{*}$ is the unique probability measure on $[0,M]$ which is dominated by $\frac{1}{c}\kappa_{M}$ and  for which there exists a constant $\ell$, the same as above, such that
\begin{equation}\tag{EL${}_{*}$}\label{ELdual}
\left\{\begin{array}{ll}
U^{\nu_{*}}+\frac{1}{c}(-U^{\kappa_{M}}-q_{T}) \leq -\frac{\ell}{c} & \nu_{*}  \ \textrm{a.e.} \\[4pt]
U^{\nu_{*}}+\frac{1}{c}(-U^{\kappa_{M}}-q_{T}) \geq -\frac{\ell}{c} & \frac{1}{c}\kappa_{M}-\nu_{*} \ \textrm{a.e.}
\end{array}
\right.
\end{equation}

This is exactly the Euler-Lagrange formulation of the fact that $\nu_{*}$ is the minimiser of the functional
\[\I^{s}_{T,M_{*}}(\nu)= \iint_{[0,M]^2} -\log|u-v| \; d\nu(u) d\nu(v)+2 \int_{\mathbb R_+}\frac{-U^{\kappa_{M}}(u)-\frac{T}{2} u}{2\sqrt{M}-1}  \, d\nu(u)\]
on 
\[\L^{s}_{M_{*}}=\left\{\nu \in \Prob([0,M]) : \nu\leq \frac{\kappa_{|[0,M]}}{2\sqrt{M}-1}\right\}.\]

This minimisation problem may look rather complicated, but all we need to prove is that the support of its solution $\nu_{*}$ is an interval of the form $[a,M]$, that is,  an interval which contains $M$. The fact that the support of $\nu_{*}$ contains $M$ follows immediately from the fact that we chose $M$ outside the support of $\nu^{*}_{T}$.

In order to prove that the support of $\nu_{*}$ is an interval, we apply again Theorem \ref{supportconvexecontrainte}. For this, we need to prove that the external potential to which $\nu_{*}$ is submitted is convex. Since $q_{T}$ is linear, it is enough to prove that $U^{\kappa_{M}}$ is concave. This is done by direct computation: one establishes first that for all $x\in [0,M]$,
\[U^{\kappa_{M}}(x)=2\sqrt{x}\log \left(\frac{\sqrt{M}-\sqrt{x}}{\sqrt{M}+\sqrt{x}}\right)-2\sqrt{M} \log(M-x)+4\sqrt{M},\]
then that $\lim_{x\to 0} x^{\frac{3}{2}} (U^{\kappa_{M}})''(x) =0$, and finally that
\[\frac{d}{dx}\left(x^{\frac{3}{2}} (U^{\kappa_{M}})''(x)\right)=-\frac{\sqrt{Mx}}{(M-x)^{2}},\]
from which it follows that $(U^{\kappa_M})''$ is negative on $(0,M]$. This concludes the proof that $U^{\kappa_{M}}$ is concave.

{\em Step 5 : $a\neq b$.} 
We know by definition that  $a\leq b$. However, we still need to rule out the possibility that $a=b$. We do this by contradiction: let us assume that $a=b$. Then $\nu^{*}_{T}=\kappa_{|[0,a]}$, so that we must have $a=\frac{1}{4}$, and $\mu^{*}_{T}=\Leb_{|[-\frac{1}{2},\frac{1}{2}]}$. A direct computation then shows that for all $x\in \R$,
\[U^{\mu^{*}_{T}}(x)=1 + \left(x-\frac{1}{2}\right) \log \left|x-\frac{1}{2}\right|- \left(x+\frac{1}{2}\right)\log \left|x+\frac{1}{2}\right|.\]
In particular, $(U^{\mu^{*}_{T}})'(\frac{1}{2})=-\infty$ and no matter the value of $T$, the function $U^{\mu^{*}_{T}}+Q_{T}$ is strictly decreasing in a neighbourhood of $\frac{1}{2}$, which forbids $\mu^{*}_{T}$ from being the solution of the Euler-Lagrange version of the minimisation problem.
\end{proof}

Let us now turn to the determination of the values of $\alpha$ and $\beta$. We will use again a duality argument,  for which we found the inspiration in \cite{DraSaf00}.

\begin{proof}[Proof of Lemma \ref{MSab}]

For all reals $0\leq a \leq b$, let us define $J(a,b)=[-b,-a]\cup [a,b]$. Let us denote by $\eta^{*}$ the restriction of $\mu^{*}_{T}$ to $J=J(\alpha,\beta)$. Let us denote by $\pi_{\alpha}$ the restriction to $[-\alpha,\alpha]$ of the Lebesgue measure. By Lemma \ref{beckermann}  and Proposition \ref{levecontrainte}, the measure $\eta^{*}$ is equal to $1-2\alpha$ times the unique probability measure on $\Sigma_{\alpha}=\R\setminus (-\alpha,\alpha)$ which minimises the functional $\I_{Q_{T,\alpha}}$,
where we have set, for all $x\in \Sigma_{\alpha}$,
$$ Q_{T,\alpha}(x)= \frac{Q_{T}(x) + \Up{\pi_{\alpha}}(x)}{1-2\alpha}.$$
Our goal is to determine $\alpha$ and $\beta$ and this is equivalent to determining the support of $\eta^{*}$. To do this, we use the functional introduced by Mhaskar and Saff and which we discussed in Section \ref{sec:supportMS}.
According to Theorem \ref{theo:MS}, $J(\alpha,\beta)$ is, among all compact subsets of $\Sigma_{\alpha}$, one which minimises the functional $\MS_{Q_{T,\alpha}}$. From this, we deduce in particular that the function $b\mapsto \MS_{Q_{T,\alpha}}(J(\alpha,b))$ achieves its minimum on $(\alpha,+\infty)$ at $b=\beta$. An explicit computation will show that this function is differentiable and we find a first relation:
\begin{equation}\label{MS1}\tag{$\MS^{*}$}
\left.\frac{\partial}{\partial b}\MS_{Q_{T,a}}(J(a,b))\right|_{(a,b)=(\alpha,\beta)}=0.
\end{equation}

Since $\alpha$ is a boundary point of the closed set $\Sigma_{\alpha}$, we cannot let $a$ vary freely around $\alpha$ and it would be delicate to justify directly that $\frac{\partial}{\partial a}\MS_{Q_{T,c}}(J(a,b))$ vanishes at $(a,b,c)=(\alpha,\beta,\alpha)$. To go around this difficulty and to get a second relation between $\alpha$ and $\beta$, we use again a duality argument. Let us denote by $\pi_{\beta}$ the restriction to $[-\beta,\beta]$ of the Lebesgue measure, and define $\eta_{*}=\frac{1}{2\beta-1}(\pi_{\beta}-\mu^{*}_{T})=\frac{1}{2\beta-1}(\rm{Leb}_{|J(\alpha,\beta)}-\eta^{*})$. See Figure \ref{etaeta} for an illustration of these definitions.

\begin{figure}[h!]
\begin{center}
\scalebox{0.9}{\includegraphics{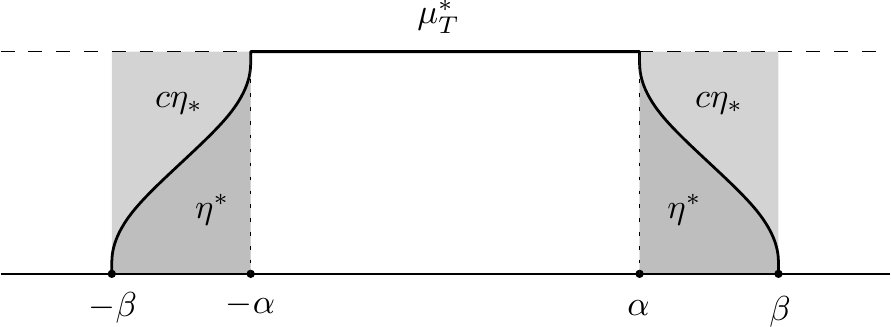} }
\caption{\label{etaeta} The measures $\eta^{*}$ and $c\eta_{*}$, with $c=2\beta-1$.}
\end{center}
\end{figure}

From the inequalities
\begin{equation}
\left\{\begin{array}{ll}
U^{\mu^{*}_{T}}+Q_{T} \leq \ell & \mu^{*}_{T}  \ \textrm{a.e.} \\
U^{\mu^{*}_{T}}+Q_{T} \geq \ell & \pi_{\beta}-\mu^{*}_{T} \ \textrm{a.e.}
\end{array}
\right.
\end{equation}
it follows that
\begin{equation}
\left\{\begin{array}{ll}
U^{\eta_{*}}+\frac{-U^{\pi_{\beta}}-Q_{T}}{2\beta-1}\leq -\frac{\ell}{2\beta-1} & \eta_{*}  \ \textrm{a.e.} \\[4pt]
U^{\eta_{*}}+\frac{-U^{\pi_{\beta}}-Q_{T}}{2\beta-1} \geq -\frac{\ell}{2\beta-1} & \frac{\pi_{\beta}}{2\beta-1}-\eta_{*} \ \textrm{a.e.}
\end{array}
\right.
\end{equation}
which can conveniently be rewritten as
\begin{equation}
\left\{\begin{array}{ll}
U^{\eta_{*}}+\frac{U^{\pi_{\beta}}+Q_{T}}{1-2\beta}\leq \frac{-\ell}{1-2\beta} & \eta_{*}  \ \textrm{a.e.} \\[4pt]
U^{\eta_{*}}+\frac{U^{\pi_{\beta}}+Q_{T}}{1-2\beta} \geq \frac{-\ell}{1-2\beta} & \frac{\pi_{\beta}}{2\beta-1}-\eta_{*} \ \textrm{a.e.}
\end{array}
\right.
\end{equation}

Since the support of $\frac{\pi_{\beta}}{2\beta-1}-\eta_{*}$ is the full interval $[-\beta,\beta]$, these inequalities express in Euler-Lagrange form the fact that $\eta_{*}$ is the unique minimiser of $\I_{Q_{T,\beta}}$ on $[-\beta,\beta]$. From this observation, it follows that the support of $\eta_{*}$, which is none other than $J(\alpha,\beta)$, achieves the minimum of the Mhaskar-Saff functional $\MS_{Q_{T,\beta}}$ among all compact subsets of $[-\beta,\beta]$. In particular, taking differentiability for granted,
\begin{equation}\label{MS2}\tag{$\MS_{*}$}
\left.\frac{\partial}{\partial a}\MS_{Q_{T,b}}(J(a,b))\right|_{(a,b)=(\alpha,\beta)}=0.
\end{equation}

Let us now compute the Mhaskar-Saff functional. In order to treat the two cases at once, let us choose $a,b,c$ positive reals such that $a\leq c \leq b$ and compute $\MS_{Q_{T,c}}(J(a,b))$. According to \eqref{eq:defMS}, we have
\[(1-2c)\MS_{Q_{T,c}}(J(a,b))=(1-2c)\int U^{\omega_{J(a,b)}} \; d\omega_{J(a,b)} + \int (Q+U^{{\rm{Leb}}_{|[-c,c]}}) \; d\omega_{J(a,b)}.\]

It is well known (see for example Section 14 of \cite{Widom}) that the equilibrium measure of the compact set $J(a,b)$ is given by
$$ d\omega_{J(a,b)} = \frac{|x|}{\pi \sqrt{(b^2-x^2)(x^2-a^2)}} \mathbf 1_{J(a,b)}(x)dx.$$ 
From there one computes directly 
\[\int x^2 d\omega_{J(a,b)} = \frac{a^2+b^2}{2}.\]
In order to compute the capacity of $J(a,b)$, one needs to compute the potential of the equilibrium measure $\omega_{J(a,b)}$. A first step for this is to compute its Stieltjes transform 
\[G^{\omega_{J(a,b)}}(z)=\frac{z}{\sqrt{(z^{2}-a^{2})(z^{2}-b^{2})}},\]
where as usual we take the branch of the square root which is a positive real number near real infinity. Taking the appropriate primitive and its real part, we find that for all $x \in [-a,a]$,
$$ \Up{\omega_{J(a,b)}}(x) = -\frac{1}{2} \log\left(\frac{a^2+b^2}{2}-x^2 + \sqrt{(b^2-x^2)(a^2-x^2)}\right)+\frac{1}{2} \log 2.$$
Taking $x=a$, we find
\[\int U^{\omega_{J(a,b)}} \; d\omega_{J(a,b)} =U^{\omega_{J(a,b)}}(a)=-\frac{1}{2}\log \frac{b^{2}-a^{2}}{4}.\]
Moreover, this is also the value of $U^{\omega_{J(a,b)}}$ on the whole set $J(a,b)$. Finally, we compute
\[\int \Up{{\rm{Leb}}_{|[-c,c]}} d\omega_{J(a,b)}= \int U^{\omega_{J(a,b)}} d{\rm{Leb}}_{|[-c,c]}=2\int_{0}^{c} U^{\omega_{J(a,b)}}(x)\; dx.\]
Since the potential of $\omega_{J(a,b)}$ is constant on $J(a,b)$, hence on $[a,c]$, we find
\[\int \Up{{\rm{Leb}}_{|[-c,c]}} d\omega_{J(a,b)}=2\int_{0}^{a} U^{\omega_{J(a,b)}}(x)\; dx +2(c-a)U^{\omega_{J(a,b)}}(a).\]
The last integral is relatively easily computed by parts, and we find, after some simplification,
\[\int \Up{{\rm{Leb}}_{|[-c,c]}} d\omega_{J(a,b)}=-c \log \frac{b^{2}-a^{2}}{4}- 2b\left(K\left(\frac{a}{b}\right)-E\left(\frac{a}{b}\right)\right).\]
Altogether, we find
\[(1-2c)\MS_{Q_{T,c}}(J(a,b))=-\frac{1}{2}\log \frac{b^{2}-a^{2}}{4}+\frac{T}{8}(a^{2}+b^{2})-2b\left(K\left(\frac{a}{b}\right)-E\left(\frac{a}{b}\right)\right),\]
which quite remarkably does not depend on $c$. From there, we see in particular that the Mhaskar-Saff functional is indeed differentiable. Therefore, Equations \eqref{MS1} and \eqref{MS2}
are indeed satisfied and if we denote by $m_{T}(a,b)$ the function on the right handside above, 
the numbers $\alpha$ and $\beta$ are then solutions of the equations
\[\frac{\partial}{\partial\alpha} m_{T}(\alpha,\beta)=0 \mbox{ and } \frac{\partial}{\partial\beta} m_{T}(\alpha,\beta)=0.\]

Taking into account the derivatives of the complete elliptic functions, and setting $k=\frac{\alpha}{\beta}$, these equalities entail
\[\frac{k}{1-k^{2}}+\frac{T}{4}k\beta^{2}-\frac{2k\beta}{1-k^{2}}E=-\frac{1}{1-k^{2}}+\frac{T}{4}\beta^{2} -2K\beta +\frac{2\beta}{1-k^{2}}E=0.\]
From these relations, one extracts
\[T\beta=4K \mbox{ and } \beta = \frac{1}{2E-(1-k^2)K}.\]
The proof is now complete. Indeed, the relation $\alpha=k\beta$ follows from our definition of $k$, the expression of $\beta$ above is exactly the expected one, and the relation between $T$ and $k$ follows immediately from the two equalities above. Moreover, elementary computations allow one to check that the map $k\mapsto 8EK-4(1-k^{2})K^{2}$ from $(0,1)$ to $\R$ is an increasing bijection from $(0,1)$ to $(\pi^{2},\infty)$. 
\end{proof}

\subsection{Derivation of the constrained weighted equilibrium measure} \label{sec:findweightedmin}
{\sl 

Our problem is now to find the density of the measure $\eta^{*}$, which is the restriction of the constrained minimiser $\mu^{*}_{T}$ on $J(\alpha,\beta)=[-\beta,-\alpha]\cup [\alpha,\beta]$. According to Proposition \ref{levecontrainte}, $\eta^{*}$ is $(1-2\alpha)$ times the probability measure on $J(\alpha,\beta)$ which minimises the functional $\I_{Q_{T,\alpha}}$, where for all $x\in \R$, we have set
\[Q_{T,\alpha}(x)=\frac{Q_{T}(x) +\Up{\pi_{\alpha}}(x)}{1-2\alpha}=\frac{1}{1-2\alpha}\left(\frac{T}{4}x^{2}+2\alpha+ (x-\alpha) \log |x-\alpha|- (x+\alpha)\log |x+\alpha|\right).\]
As in Section \ref{sec:findmin}, we do not care much about rigour in this subsection. A rigorous proof that the computations we are about to do indeed yield to the minimiser will be given in the next subsection.

Let us assume that $\eta^{*}$ admits a H\"older continuous density $\psi$ with respect to the Lebesgue measure. Let us also introduce the shorthand $J=J(\alpha,\beta)$. Recall that we denote by $\pi_{\alpha}$ the Lebesgue measure on $[-\alpha,\alpha]$. By differentiating formally the functional $\I_{Q_{T,\alpha}}$ at $\mu^{*}_{T}$ as we did in the case of Wigner matrices, or by writing the Euler-Lagrange formulation of the minimisation problem, we successively find that the function
\[x\mapsto \int_{J} -\log|x-y| \psi(y)\; dy +\frac{Tx^{2}}{4}+ \Up{\pi_\alpha} (x)\]
is constant on $J$, and that for all $x$ in the interior of $J$,
\[\PVint_{J} \frac{\psi(y)}{x-y}\; dy = \frac{Tx}{2}+(\Up{\pi_{\alpha}})'(x)=\frac{Tx}{2}+\log \frac{x-\alpha}{x+\alpha}.\]

In order to determine $\psi$, we introduce the Stieltjes transform of $\eta^{*}$:
\[\Gp{\eta^{*}}(z)=\int \frac{d\eta^{*}(y)}{z-y}=\int_{J} \frac{\psi(y)}{z-y} \; dy.\]
We are facing the same problem as in the case of Wigner matrices: we would like to know $\Gpp{\eta^{*}}-\Gpm{\eta^{*}}$ on the real axis but what we know is $\Gpp{\eta^{*}}+\Gpm{\eta^{*}}$. Let us introduce a new auxiliary function, which we still denote by $R$, and which is now analytic on $\C\setminus J$:
\[R(z)=\sqrt{(z^{2}-\alpha^{2})(z^{2}-\beta^{2})},\]
where we choose the branch of the square root which is a positive real number on $(\beta,\infty)$. For all $x\in J$, we now have
\[R_{\pm}(x)=\pm \sgn(x) i \sqrt{(\beta^{2}-x^{2})(x^{2}-\alpha^{2})}.\]
Thus, for all $x\in J$,
\[\left(\frac{\Gp{\eta^{*}}}{R}\right)_{\pm}(x)=-\frac{i\pi\psi(x)}{R_{+}(x)}\mp i \pi \frac{i}{\pi} \frac{\frac{T}{2}x+(\Up{\pi_{\alpha}})'(x)}{R_{+}(x)}\]
and, following the same reasoning as in the Wigner case, we expect the equality
\begin{equation}\label{Seta}
\Gp{\eta^{*}}(z)=\frac{i}{\pi}\sqrt{(z^{2}-\alpha^{2})(z^{2}-\beta^{2})}\int_{J} \frac{\frac{T}{2}y+(\Up{\pi_{\alpha}})'(y)}{(z-y)R_{+}(y)}\; dy
\end{equation}
to hold. If we manage to compute the right-hand side of \eqref{Seta}, it will be easy to compute $\psi$ using the Sokhotski formula \eqref{PS}. To start with, we have for all $x\in J$
\[(\Up{\pi_{\alpha}})'(x)=\log\frac{x-\alpha}{x+\alpha}.\]
Let us introduce the function
\[L(z)=\frac{T}{2}z+\log\frac{z-\alpha}{z+\alpha},\]
which is holomorphic on $\C\setminus [-\alpha,\alpha]$, and satisfies $L(x)=\frac{T}{2}x+(\Up{\pi_{\alpha}})'(x)$ for all $x\in J$.

We are going to compute a contour integral in two different ways. Let $z$ be a point in $\C\setminus [-\beta,\beta]$. Let $\gamma$ be a contour which surrounds once both $z$ and the interval $[-\beta,\beta]$ (see Figure \ref{contour}). We want to compute the integral 
\[\int_{\gamma} \frac{L(w)}{R(w)}\frac{dw}{z-w}.\]
On one hand, we have $\frac{L(w)}{R(w)}\frac{1}{z-w}=O(\frac{1}{w^{2}})$ near infinity, so that, by deforming $\gamma$ into a large circle, we see that the integral is zero. 

\begin{figure}[h!]
\begin{center}
\includegraphics[width=8cm]{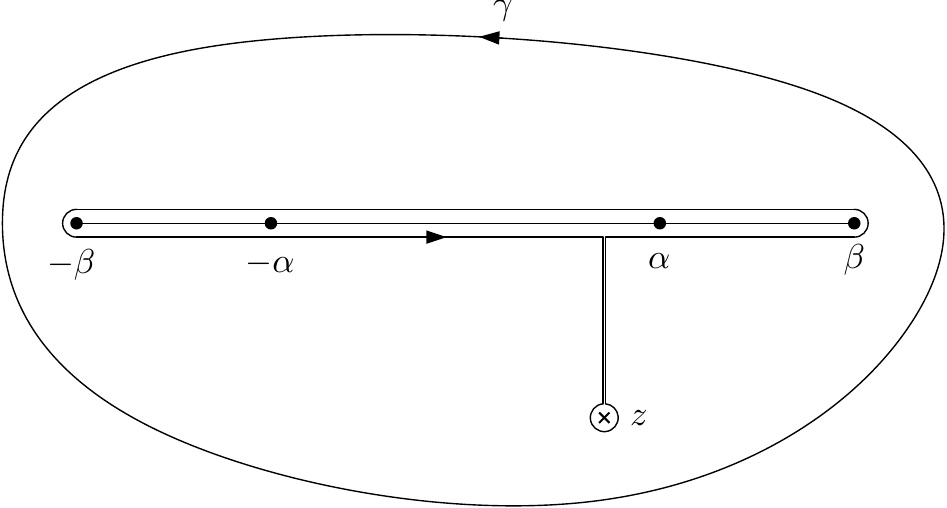}
\caption{The contour $\gamma$ and its deformation. \label{contour}}
\end{center}
\end{figure}
On the other hand, the same contour can be deformed so as to make a small circle around $z$ and to go twice along the interval $[-\beta,\beta]$, once slightly below and once slightly above. We find
\[0=-2i\pi \frac{L(z)}{R(z)}+\int_{-\beta}^{\beta} \left(- \frac{L_{+}(y)}{R_{+}(y)} +\frac{L_{-}(y)}{R_{-}(y)}\right) \frac{dy}{z-y}.\]
By carefully keeping track of the values of $L_{\pm}$ and $R_{\pm}$ on the intervals $[-\beta,-\alpha]$, $[-\alpha,\alpha]$ and $[\alpha,\beta]$, one finds
\[\int_{J} \frac{L(y)}{R_{+}(y)} \frac{dy}{z-y}=-i\pi\frac{L(z)}{R(z)} -i\pi \int_{-\alpha}^{\alpha} \frac{1}{R(y)} \frac{dy}{z-y},\]
from which we conclude that
\begin{align*}
\Gp{\eta^{*}}(z)&=L(z)+R(z)\int_{-\alpha}^{\alpha} \frac{1}{R(y)} \frac{dy}{z-y}\\
&=\log \frac{z-\alpha}{z+\alpha} +\frac{T}{2}z-\sqrt{(z^{2}-\alpha^{2})(z^{2}-\beta^{2})} \int_{-\alpha}^{\alpha}  \frac{dy}{(z-y)\sqrt{(\beta^{2}-y^{2})(\alpha^{2}-y^{2})}}.
\end{align*}
Adding the Stieltjes transform of $\pi_{\alpha}$ on one hand, setting $k=\frac{\alpha}{\beta}$ and performing elementary manipulations to the integral, we obtain our candidate for the Stieltjes transform of the full constrained weighted equilibrium measure $\mu^{*}_{T}$:
\begin{equation}\label{Smini}
H_T(z)=\frac{T}{2}z-\frac{2}{\beta z}\sqrt{(z^{2}-\alpha^{2})(z^{2}-\beta^{2})} \int_{0}^{1} \frac{ds}{(1-\frac{\alpha^{2}}{z^{2}}s^{2})\sqrt{(1-s^{2})(1-k^{2}s^{2})}}.
\end{equation}
This last integral is an elliptic integral of the third kind, for which one standard notation is
\begin{equation}\label{Pi}
\Pi(\nu;k)=\int_{0}^{1} \frac{ds}{(1-\nu s^{2})\sqrt{(1-s^{2})(1-k^{2}s^{2})}}.
\end{equation}
For each $k\in (0,1)$, this is a function of $\nu$ analytic in $\C\setminus [1,\infty)$, and we can deduce from \eqref{Smini} our candidate for the value of the density of $\eta^{*}$, that is of $\mu^{*}_{T}$, on $J$:
\[\forall x\in J,\ \psi(x)=-\frac{1}{2i\pi}((H_{T})_+-(H_T)_-)(x)=\frac{2}{\pi\beta |x|} \sqrt{(\beta^{2}-x^{2})(x^{2}-\alpha^{2})}\; \Pi\left(\frac{\alpha^{2}}{x^{2}};k\right).\]
Putting everything together, we obtain the following candidate, which we denote by $\phi_{T}$, for the density of $\mu^{*}_{T}$:
\begin{equation}\label{densitesat}
\phi_T(x)=\1_{[-\alpha,\alpha]}(x)+\frac{2}{\pi\beta |x|} \sqrt{(\beta^{2}-x^{2})(x^{2}-\alpha^{2})}\; \Pi\left(\frac{\alpha^{2}}{x^{2}};k\right) \1_{[-\beta,-\alpha]\cup [\alpha,\beta]}(x).
\end{equation}

It is interesting to note that we did not use the actual values of $\alpha$ and $\beta$ in our computation. At this point in the case of Wigner matrices, we determined the width of the support of the minimiser by considering the asymptotic behaviour of its Stieltjes transform. The same could be done here and, if we had not already rigorously determined the values of $\alpha$ and $\beta$, we could find them by writing that $H_T(z)=z^{-1}+O(z^{-2})$ near infinity (see also the proof of Proposition \ref{calculpotmini} below).

The result of our derivation of $\mu^{*}_{T}$ is illustrated in Figure \ref{figdensite} below.

\begin{figure}[h!]
\begin{center}
\includegraphics[width=4.5cm]{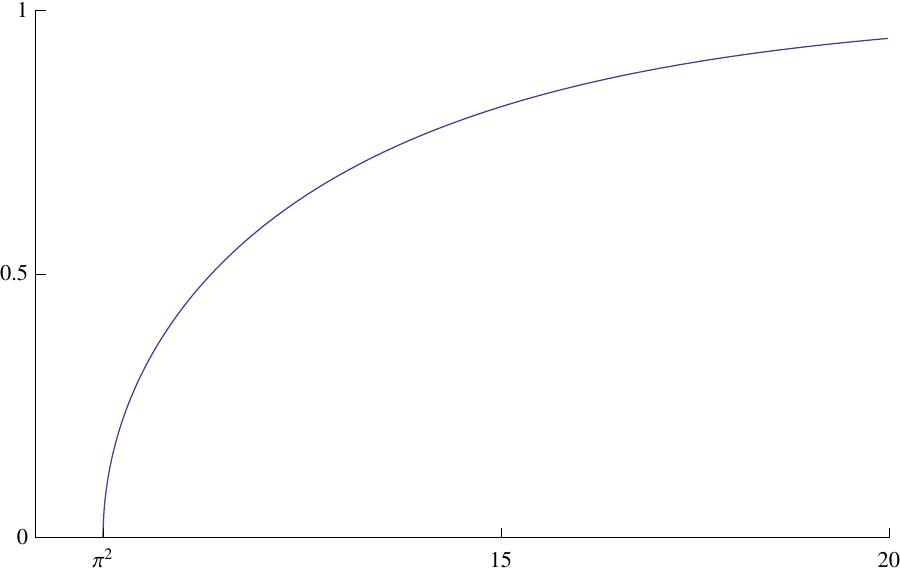}\hspace{0.3cm}\includegraphics[width=4.5cm]{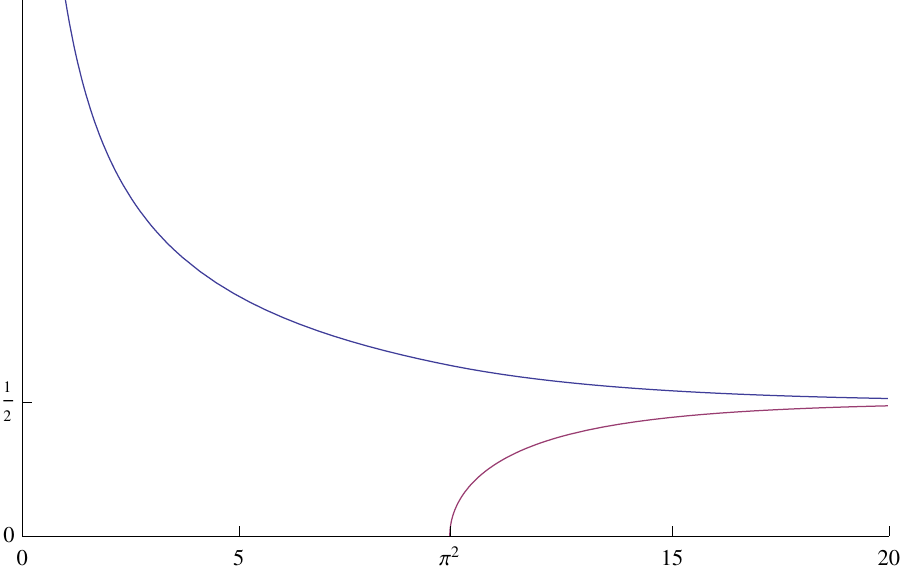}\hspace{0.5cm}\raisebox{-5mm}{\includegraphics[width=4.5cm]{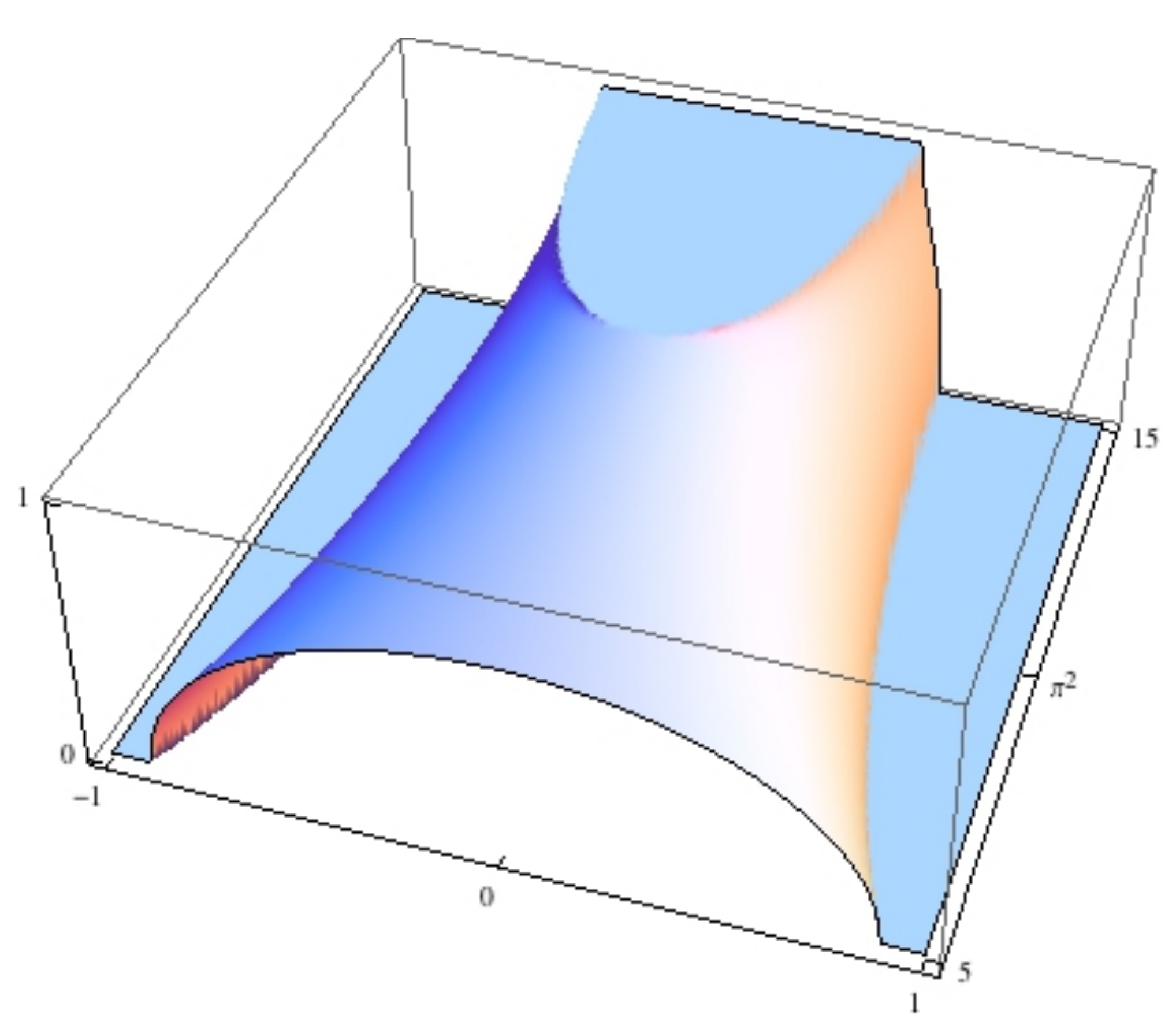}}
\caption{\label{figdensite} From left to right: $k$ as a function of $T$; $\alpha$ (red) and $\beta$ (blue) as functions of $T$; the density of the minimising measure $\mu^{*}_{T}$ as a function of $T$.}
\end{center}
\end{figure}
}

\subsection{Verification of the expression of the equilibrium measure} We are now equipped with an explicit candidate for the minimiser $\mu^{*}_{T}$, given by \eqref{densitesat} and Lemma \ref{MSab}. All we need to prove is that the restriction to $J$ of the measure $\phi_{T}(x)dx$ is, as we expect, $(1-2\alpha)$ times the minimiser of $\I_{Q_{T,\alpha}}$ on $\Sigma_{\alpha}$. For this, according to Theorem \ref{potquesurlesupport}, it suffices to check that the potential of $\phi_{T}(x)dx$ is constant on $J$. 

The next proposition concludes the whole process of determination of the minimiser.

\begin{proposition} For every $T>\pi^{2}$, the unique minimiser of $\I_{Q_{T}}$ on $\L(\R)$ is the measure $\mu^{*}_{T},$ whose density $\phi_T$ is given by \eqref{densitesat}. Moreover, $G^{\mu^{*}_{T}}$, the Stieltjes transform of $\mu^{*}_{T}$, is the function $H_{T}$ given by \eqref{Smini}.

\end{proposition}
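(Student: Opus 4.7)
The plan is to apply the uniqueness characterisation of Theorem \ref{theo:minaveccontrainte} in the economical form supplied by Proposition \ref{levecontrainte} and Theorem \ref{potquesurlesupport}: since Lemmas \ref{beckermann} and \ref{MSab} already fix $\Supp(\mu^{*}_{T}) = [-\beta,\beta]$ and identify $[-\alpha,\alpha]$ as the saturated part, only the potential of the candidate needs to be checked on the remaining region $J = [-\beta,-\alpha] \cup [\alpha,\beta]$. Throughout, let $\nu$ denote the measure with density $\phi_{T}$.

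First I would verify that $\nu$ belongs to $\L(\R)$. Positivity of $\phi_{T}$ is immediate from its expression; the bound $\phi_{T} \le 1$ on $J$ is the delicate monotonicity point, and I would establish it by showing that the elliptic factor $\frac{2}{\pi\beta|x|}\sqrt{(\beta^{2}-x^{2})(x^{2}-\alpha^{2})}\,\Pi(\alpha^{2}/x^{2};k)$ decreases from the value $1$ at $|x|=\alpha$ (matching the saturated piece) to $0$ at $|x|=\beta$, the boundary values being enforced by the relations $T\beta=4K$ and $\beta=1/(2E-(1-k^{2})K)$ of Lemma \ref{MSab}. The total mass of $\nu$ equals $1$ as a consequence of the prescribed behaviour $H_{T}(z) = z^{-1} + O(z^{-2})$ at infinity, which was built into the construction through the same boundary conditions.

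The core of the proof is to check that $\Up{\nu} + Q_{T}$ is constant on $J$. I first confirm the Stieltjes transform identity $\Gp{\nu} = H_{T}$ by the change of variable $y = \alpha s$ in the integral representation of $\Gp{\eta^{*}}$ obtained in Section \ref{sec:findweightedmin}: the Cauchy kernel turns into the elliptic integral of the third kind $\Pi$, and the logarithmic contribution from $\Gp{\pi_{\alpha}}$ exactly cancels the $\log\frac{z-\alpha}{z+\alpha}$ term arising from $\Gp{\eta^{*}}$, leaving the formula \eqref{Smini}. The function $H_{T}$ was engineered so that $(H_{T})_{+}(x) + (H_{T})_{-}(x) = Tx = 2 Q_{T}'(x)$ for $x$ in the interior of $J$. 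Combining \eqref{derUPV} with \eqref{Gpv} yields $(\Up{\nu})'(x) = -\tfrac{1}{2}\bigl((\Gp{\nu})_{+} + (\Gp{\nu})_{-}\bigr)(x) = -Q_{T}'(x)$ on the interior of $J$, so $\Up{\nu} + Q_{T}$ is constant on each of the intervals $[\alpha,\beta]$ and $[-\beta,-\alpha]$. The symmetries $\nu(-A) = \nu(A)$ and $Q_{T}(-x) = Q_{T}(x)$ identify the two constants to a single value $F_{Q_{T}}$.

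Once this is established, Proposition \ref{levecontrainte} combined with Theorem \ref{potquesurlesupport} (whose hypothesis on $\Supp(\mu^{*}_{T,\alpha}) = J$ is secured by Lemmas \ref{beckermann} and \ref{MSab}) shows that $\nu|_{J}$ equals $(1-2\alpha)$ times the unconstrained weighted equilibrium measure of $Q_{T,\alpha}$ on $\Sigma_{\alpha}$, namely $\mu^{*}_{T}|_{J}$. Since $\mu^{*}_{T}$ is forced by Lemma \ref{beckermann} to coincide with $\Leb$ on $[-\alpha,\alpha]$, we conclude $\nu = \mu^{*}_{T}$, whence also $\Gp{\mu^{*}_{T}} = H_{T}$. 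The main obstacle is the careful bookkeeping of the branches of $R(z)=\sqrt{(z^{2}-\alpha^{2})(z^{2}-\beta^{2})}$ and of the elliptic integral $\Pi(\alpha^{2}/x^{2};k)$ near the four endpoints of $J$, both to rigorously justify the Plemelj-Sokhotski step on $J$ and to secure the monotone decrease underlying $\phi_{T}|_{J} \le 1$; the latter is the point at which the precise values of $\alpha$ and $\beta$ extracted from Lemma \ref{MSab} enter in an essential way, as any other choice would either violate the constraint or destroy the constancy of the weighted potential.
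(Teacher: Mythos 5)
Your overall strategy matches the paper's: use Lemmas \ref{beckermann} and \ref{MSab} to pin down the support structure, show that the potential of the candidate is constant on $J$ via Plemelj--Sokhotski, then conclude with Proposition \ref{levecontrainte} and Theorem \ref{potquesurlesupport}. The Euler--Lagrange verification is the same, the role of the relations from Lemma \ref{MSab} is correctly identified, and the observation that evenness of $\phi_T$ and $Q_T$ equates the constants on the two components of $J$ is a point the paper leaves implicit, so that is a useful clarification.

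Where you diverge is in how you establish $G^{\phi_T}=H_T$. You propose to re-derive it by a change of variable in the integral representation \eqref{Seta}, but that representation is itself the output of the heuristic contour manipulation in Section \ref{sec:findweightedmin}, which the authors deliberately flagged as non-rigorous; leaning on it risks circularity, or at best makes the verification considerably more laborious (one would have to redo the contour argument honestly, with careful branch bookkeeping near all four endpoints, as you yourself anticipate). The paper sidesteps this entirely with a short Morera-type argument: $\phi_T$ is $\tfrac12$-H\"older, so Plemelj--Sokhotski gives that $G^{\phi_T}$ and the explicit function $H_T$ have identical jumps across $\R$; an explicit Laurent expansion shows $H_T(z)=z^{-1}+O(z^{-2})$, so the difference $G^{\phi_T}-H_T$ is entire and vanishes at infinity, hence zero. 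This simultaneously proves the Stieltjes identity and that $\phi_T$ integrates to $1$. Also note that your separate verification of $\phi_T\le 1$ on $J$ (via monotonicity of the elliptic factor) is not needed: once $\nu|_J/(1-2\alpha)$ is identified with the unconstrained equilibrium measure from Proposition \ref{levecontrainte} using Theorem \ref{potquesurlesupport}, the full measure coincides with $\mu^*_T$, which already lies in $\L(\R)$, so the constraint is inherited automatically. It is a reassuring sanity check but not part of the logical chain.
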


\begin{proof}  From its explicit expression \eqref{Smini}, we can expand $H_T$ near infinity. We find 
\begin{align*}
H_T(z)&=\frac{T}{2}z-\frac{2z}{\beta}\left(1-\frac{\alpha^{2}+\beta^{2}}{2z^{2}}\right)\left(K+\frac{\alpha^{2}}{z^{2}}\int_{0}^{1} \frac{\frac{1}{k^{2}}(1-(1-k^{2}s^{2}))}{\sqrt{(1-s^{2})(1-k^{2}s^{2})}}\; ds\right) + O\left(\frac{1}{z^{2}}\right)\\
&=\left(\frac{T}{2}-\frac{2K}{\beta}\right) z + \frac{1}{z}\left(\beta(2E-(1-k^{2})K)\right)+ O\left(\frac{1}{z^{2}}\right)
\end{align*}
and the relations between $T$, $k$ and $\beta$ imply precisely that $H_T(z)=z^{-1}+O(z^{-2})$.

Now, the function $\phi_T$ defined in \eqref{densitesat} is positive and smooth on $\R\setminus\{\pm \alpha,\pm\beta\}$ and one can check directly that its lack of smoothness at these four points is that of  square root at the origin. It is thus $\frac{1}{2}$-H\"older continuous, and the Sokhotski formula \eqref{PS} is true  at every point of $\R$ for its Stieltjes transform that we will denote by $G^{\phi_T}$. Hence, 
$G^{\phi_T}$ and $H_T$ have exactly the same jumps across the real axis. The difference between these two functions, which is analytic on $\C\setminus \R$ and continuous on $\C$, is thus analytic on $\C$, for example as a consequence of Morera's theorem. By the computation above, we know that $H_T$ is equivalent to  $z^{-1}$ near infinity, and $G^{\phi_T}$ is equivalent to $z^{-1}\int \phi_T(x)dx.$
Their difference is therefore entire and bounded, hence constant, and in fact equal to zero. This proves that $H_T$ is the Stieltjes transform of $\phi_T$, and also that $\phi_{T}$ is the density of a probability measure.

In order to see that $\phi_{T}$ is the density of the minimiser, let us denote by $G^\psi$ the Stieltjes transform of $\psi$, which is the restriction of $\phi_T$ to $J$. The computations made in the previous subsection give directly that the function $G^\psi$ satisfies
 $$ \forall x \in J,\quad  (G^\psi)_+(x) + (G^\psi)_-(x) = 2\left(\frac{Tx}{2} + \log \frac{x-\alpha}{x+\alpha} \right).$$
By regularity of the function $\psi$ and again by \eqref{PS}, the left-hand side is equal to $2\, \PVint_J \frac{\psi(y)}{x-y}dy$. Therefore, the function $x \mapsto \int_J -\log|x-y|\psi(y)dy + \frac{Tx^2}{4} + U^{\pi_\alpha}(x)$ is constant on $J$. An application of Proposition \ref{levecontrainte} concludes the proof.
\end{proof}


Figure \ref{potsat} illustrates the shape of the potential of the measure $\mu^{*}_{T}$.

\begin{figure}[h!]
\begin{center}
\includegraphics[width=6cm]{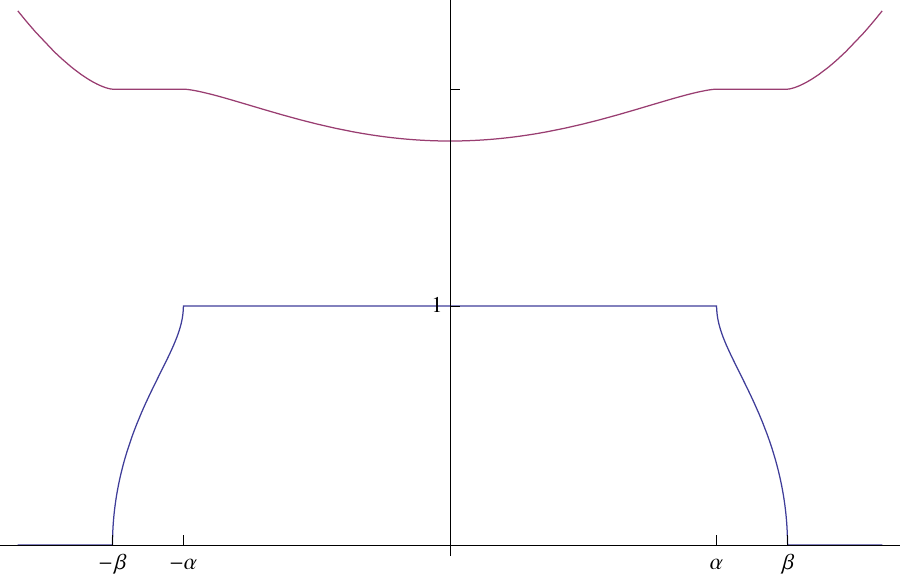}
\caption{\label{potsat} The density of the minimising measure $\mu^{*}_{T}$ and the graph of the function $x\mapsto \Up{\mu^{*}_{T}}(x)+\frac{T}{4}x^{2}$, for $T$ slightly larger than $14$.}
\end{center}
\end{figure}

\section{The Douglas-Kazakov phase transition}\label{sec:transition}

\subsection{Computation of the free energy}

We now have all the tools in hand to prove our main result, Theorem \ref{PT3}. Recall from Proposition \ref{limZ} that the free energy $F$, which is a function of $T$, and the regularity of which is our main concern, is the difference between an affine function of $T$ and the function $M$ defined by
\[M(T)=\I_{T}(\mu^{*}_{T}).\]
Our first task is to compute $M$. Recall that for $T> \pi^{2}$, the numbers $k\in (0,1)$, $\beta>0$ and $\alpha\in (0,\beta)$ are determined as functions of $T$ by Lemma \ref{MSab}.

On the way of finding an explicit expression for the function $M,$ that will eventually appear in Proposition \ref{calculeM}, we will need to introduce the incomplete elliptic integrals of the first and second kind, which for each $k\in (0,1)$ are the following analytic functions in $\C\setminus \big((-\infty,-1]\cup [1,\infty])\big)$:
\[F(z;k)=\int_{0}^{z} \frac{1}{\sqrt{(1-s^{2})(1-k^{2}s^{2})}}\; ds\ \mbox{ and } \ E(z;k)=\int_{0}^{z} \sqrt{\frac{1-k^{2}s^{2}}{1-s^{2}}}\; ds,\]
and to find an expression of the logarithmic potential of $\mu_T^*$ at the points $\alpha$ and $\beta$. This is what the next lemma does. 

\begin{proposition}\label{calculpotmini}
For all $z\in \C\setminus (-\infty,\beta]$, we have
\begin{align}
\int\log(z-y) \; d\mu^{*}_{T}(y)&=-\log\frac{\sqrt{z^{2}-\alpha^{2}}+\sqrt{z^{2}-\beta^{2}}}{2}-2E z F\left(\frac{\beta}{z};k\right)+2KzE\left(\frac{\beta}{z};k\right)\nonumber \\
&\hspace{4cm} -\frac{K}{2\beta}\left((2z)^{2}-\left(\sqrt{z^{2}-\alpha^{2}}+\sqrt{z^{2}-\beta^{2}}\right)^{2}\right)+1,\label{gmin}
\end{align}
where $\log$ is the branch of the logarithm on $\C\setminus \R_{-}$ such that to $\log 1=0$.
\end{proposition}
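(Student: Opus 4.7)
The plan is to characterize $g(z) := \int \log(z-y)\,d\mu^{*}_{T}(y)$, extended holomorphically to $\C\setminus(-\infty,\beta]$, as the unique holomorphic antiderivative of $H_T(z)=\Gp{\mu^{*}_{T}}(z)$ on that domain satisfying $g(z)-\log z\to 0$ as $z\to+\infty$; the asymptotic follows from $\mu^{*}_{T}$ being a symmetric probability measure supported in $[-\beta,\beta]$. Denoting by $\Phi(z)$ the right-hand side of \eqref{gmin}, it suffices to check three things: that $\Phi$ is holomorphic on $\C\setminus(-\infty,\beta]$, that $\Phi'(z)=H_T(z)$ on that domain, and that $\Phi$ matches the asymptotic of $g$ at $+\infty$. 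Analyticity of $\Phi$ is routine: the standard branches of the square roots are analytic on $\C\setminus(-\infty,\beta]$ and positive on $(\beta,+\infty)$, while $\beta/z$ avoids $(-\infty,-1]\cup[1,+\infty)$ on the slit plane, making $F(\beta/z;k)$ and $E(\beta/z;k)$ holomorphic there.

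The main step is the verification of $\Phi'(z)=H_T(z)$ for $z>\beta$. Setting $R(z)=\sqrt{(z^{2}-\alpha^{2})(z^{2}-\beta^{2})}$, one computes by the chain rule
\[\frac{d}{dz}\log\frac{\sqrt{z^{2}-\alpha^{2}}+\sqrt{z^{2}-\beta^{2}}}{2}=\frac{z}{R(z)},\]
\[\frac{d}{dz}F\!\left(\frac{\beta}{z};k\right)=-\frac{\beta}{R(z)},\qquad \frac{d}{dz}E\!\left(\frac{\beta}{z};k\right)=-\frac{\beta(z^{2}-\alpha^{2})}{z^{2}R(z)},\]
together with $R'(z)=z(2z^{2}-\alpha^{2}-\beta^{2})/R(z)$. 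Substituting these expressions into $\Phi'(z)$ and invoking the relations $T\beta=4K$ and $\alpha=k\beta$ from Lemma \ref{MSab} to simplify, the identity $\Phi'(z)=H_T(z)$ reduces to a single algebraic relation expressing the complete elliptic integral of the third kind $\Pi(\alpha^{2}/z^{2};k)$ as a combination of the incomplete integrals $F(\beta/z;k)$ and $E(\beta/z;k)$ with rational coefficients in $z,\alpha,\beta,R(z)$. This is the classical Legendre-type reduction of a third-kind integral with characteristic $\nu=k^{2}(\beta/z)^{2}$ to first- and second-kind integrals at the complementary parameter $\beta/z$; I would prove it by observing that both sides, as functions of $z\in(\beta,+\infty)$, satisfy the same first-order linear ODE and agree in the limit $z\to+\infty$, or equivalently by a direct integration by parts on the defining integral for $\Pi$.

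Finally, the asymptotic is checked using $F(w;k)=w+O(w^{3})$, $E(w;k)=w+O(w^{3})$ near $w=0$, together with $R(z)=z^{2}-(\alpha^{2}+\beta^{2})/2+O(z^{-2})$ at infinity. The four terms of $\Phi(z)$ each contribute a constant as $z\to+\infty$, and all the $O(1)$ contributions combine into the single expression $-2E\beta+2K\beta-K(\alpha^{2}+\beta^{2})/\beta+1$. Substituting $\alpha=k\beta$ and using the relation $\beta(2E-(1-k^{2})K)=1$ from Lemma \ref{MSab}, this constant vanishes, giving the correct matching with $g$. The main obstacle is the reduction identity for $\Pi$ mentioned in the second paragraph: everything else amounts to routine application of the chain rule and careful bookkeeping of asymptotic expansions, but this reduction lemma is the only genuinely nontrivial special-function identity the proof requires.
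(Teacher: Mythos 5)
Your strategy coincides exactly with the paper's, whose entire proof is the one sentence: it suffices to check that the right-hand side of \eqref{gmin} equals $\log z + O(z^{-1})$ near infinity and that its derivative equals $G^{\mu^*_T}$. Your chain-rule formulas for the derivatives of $\log\tfrac{\sqrt{z^2-\alpha^2}+\sqrt{z^2-\beta^2}}{2}$, of $F(\beta/z;k)$ and of $E(\beta/z;k)$ are all correct, and the Legendre reduction of $\Pi$ to $F$ and $E$ that you invoke is indeed the relevant special-function input (it is Formula 17.7.6 of Abramowitz and Stegun, used explicitly in the proof of Proposition \ref{calculeM}).

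There is nevertheless a genuine gap, and it conceals a sign discrepancy. You state but never carry out the central verification $\Phi'(z) = H_T(z)$; since everything else is routine, this is the substance of the proof and should not be delegated to an unproved reduction lemma. Carrying it out exposes the problem: substituting your derivative formulas into $\Phi'(z)$, putting everything over the common denominator $\beta z R(z)$ with $R(z)=\sqrt{(z^2-\alpha^2)(z^2-\beta^2)}$, and using $\alpha=k\beta$ together with $\beta(2E-(1-k^2)K)=1$, the numerator of $\Phi'(z)+H_T(z)$ collapses to zero, so that $\Phi'(z) = -H_T(z)$, not $+H_T(z)$. Your asymptotic check has the same flaw: the first term of $\Phi$ contributes $-\log z + O(z^{-2})$ at infinity, not a constant as you assert, so even though the constant parts do sum to $-2E\beta + 2K\beta - K(\alpha^2+\beta^2)/\beta + 1 = 0$, you obtain $\Phi(z) = -\log z + O(z^{-1})$, which matches $-g(z)$ rather than $g(z)$. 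What your argument (implicitly, and incompletely) verifies is that the right-hand side of \eqref{gmin} is the analytic extension of $U^{\mu^*_T}$ to $\C\setminus(-\infty,\beta]$, i.e. $-\int\log(z-y)\,d\mu^*_T(y)$; this is also how it is used downstream in the proof of Proposition \ref{calculeM}, where the value of $U(\beta)$ is read off directly from the right-hand side of \eqref{gmin} with no extra minus sign. A careful blind attempt should have caught this and concluded that either \eqref{gmin} needs an overall sign correction or that the identity being verified is the negative of the stated one.
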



\begin{proof} It suffices to check that the right-hand side of \eqref{gmin} is equal to 
$\log z+O(z^{-1})$ near infinity and that its derivative is equal to $G^{\mu^{*}_{T}}$.
\end{proof}

We now go to the main object of this subsection, namely the computation of $M.$

\begin{proposition}\label{calculeM}
The function $M$ is given by 
$$  M(T)=\left\{\begin{array}{ll}
                   \frac{1}{2} \log T + \frac{3}{4} & \textrm{ if } T \le \pi^2, \\[3pt]
                   \frac{3}{2} -\frac{1}{2} \log\left(\frac{\beta^2}{4}(1-k^2)\right) -\frac{2}{3} K\beta (1+k^2) -\frac{1}{12} K^2\beta^2(1-k^2)^2& \textrm{ if } T > \pi^2.
                  \end{array}
\right.$$
\end{proposition}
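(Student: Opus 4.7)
The first case is essentially a restatement of the Wigner computation from Section~\ref{sec:minproof}. For $T \leq \pi^{2}$, the unconstrained minimiser $\sigma_{1/T}$ already belongs to $\L(\R)$ and therefore coincides with $\mu_T^*$, so $M(T) = \I^{1/T}(\sigma_{1/T}) = \frac{3}{4} - \frac{1}{2}\log(1/T) = \frac{3}{4} + \frac{1}{2}\log T$, matching the claim.

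For $T > \pi^{2}$ my plan is to use the Euler--Lagrange description of $\mu_T^*$ to decompose
\[
\I_T(\mu_T^*) = \int (U^{\mu_T^*}+Q_T)\,d\mu_T^* + \int Q_T\,d\mu_T^*
\]
into pieces that can be read off from the explicit formulas already obtained. On $J(\alpha,\beta)$ the relation $U^{\mu_T^*}+Q_T \equiv F_Q$ contributes $F_Q(1-2\alpha)$; on $[-\alpha,\alpha]$, where $\mu_T^*$ equals Lebesgue, Fubini recasts the remaining piece as $\int U^{\pi_\alpha}\,d\mu_T^*$, with $\pi_\alpha = \mathrm{Leb}_{|[-\alpha,\alpha]}$. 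The constant $F_Q$ is obtained by evaluating the closed form of $g_T$ in Proposition~\ref{calculpotmini} at $z = \beta$, where the incomplete elliptic integrals degenerate to $F(1;k)=K$, $E(1;k)=E$ and $\sqrt{z^2-\beta^2}$ vanishes. The second moment $m_2 = \int x^{2}\,d\mu_T^*$, which enters via $\int Q_T\,d\mu_T^* = (T/4) m_2$, is read off from the $z^{-3}$-coefficient in the expansion of $H_T$ at infinity, using $\Pi(\alpha^2/z^2;k) = K + (K-E)\beta^2 z^{-2} + O(z^{-4})$ (via $I_2 = (K-E)/k^2$ and $\alpha = k\beta$) together with the expansion of $\sqrt{(z^2-\alpha^2)(z^2-\beta^2)}$. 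Substituting everything back and simplifying with the two scalar relations $T\beta = 4K$ and $\beta(2E - (1-k^2)K) = 1$ of Lemma~\ref{MSab} should then yield the claimed closed form.

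The principal obstacle in this direct route is the integral $\int U^{\pi_\alpha}\,d\mu_T^* = \int U^{\pi_\alpha}\,d\pi_\alpha + \int U^{\pi_\alpha}\,d\eta^*$: the first piece evaluates in elementary terms to $6\alpha^2 - 4\alpha^2\log(2\alpha)$, but the second requires integrating the explicit logarithmic potential $U^{\pi_\alpha}(y) = 2\alpha + (y-\alpha)\log|y-\alpha| - (y+\alpha)\log|y+\alpha|$ against the elliptic density on $J(\alpha,\beta)$, which is hardly pleasant. A cleaner alternative that I would try in parallel sidesteps this entirely by an envelope argument. Since $\mu_T^*$ minimises $\I_T$ on the $T$-independent admissible set $\L(\R)$ and $\partial_T \I_T(\mu) = \frac{1}{2}\int x^{2}\,d\mu$, differentiating under the minimum gives $M'(T) = m_2(T)/2$. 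Reparametrising by $k$ through $T = 4K(2E - (1-k^2)K)$ and invoking the standard identities $dE/dk = (E-K)/k$ and $dK/dk = (E - (1-k^2)K)/(k(1-k^2))$ reduces the proof to a purely algebraic verification that the derivative of the claimed right-hand side equals $\tfrac{1}{2}m_2\,dT/dk$, plus the boundary check at $k \to 0^{+}$: there the elliptic integrals degenerate to $K,E \to \pi/2$ and $\beta \to 2/\pi$, the formula collapses to $\frac{3}{4} + \log\pi$, and this coincides with the first-case value $\frac{3}{4} + \frac{1}{2}\log\pi^{2}$, enforcing continuity of $M$ at the critical point $T = \pi^2$ and fixing the constant of integration.
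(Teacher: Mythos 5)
Your subcritical case matches the paper's argument exactly. For $T>\pi^{2}$, the ``direct route'' you sketch first is essentially the paper's own proof: the paper decomposes $M(T)=\int(U+2Q)\,d\mu^{*}$ into four pieces, and the term you flag as the obstacle, $\int U^{\pi_{\alpha}}\,d\mu^{*}_{T}$, is by Fubini exactly the paper's hardest piece $\int_{-\alpha}^{\alpha}U(t)\,dt$, which they evaluate by repeated integration by parts against $g$ and $G$ and careful bookkeeping of $F_{\pm}$ and $E_{\pm}$ on $[-\alpha,\alpha]$. So your first route is not a dead end; it is precisely what the paper grinds through.

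Your envelope alternative, however, is a genuinely different and cleaner route, and it works. Since $\L(\R)$ is $T$-independent and $T\mapsto\I_{T}(\mu)=\mathcal{E}(\mu)+\tfrac{T}{2}\int x^{2}\,d\mu$ is affine in $T$ for fixed $\mu$, the value function $M(T)=\inf_{\mu\in\L(\R)}\I_{T}(\mu)$ is concave, and the two-sided bound
\[
\tfrac12 m_{2}(T_{2})\ \le\ \frac{M(T_{2})-M(T_{1})}{T_{2}-T_{1}}\ \le\ \tfrac12 m_{2}(T_{1}) \qquad (T_{1}<T_{2})
\]
combined with the continuity of $m_{2}(T)=\int x^{2}\,d\mu^{*}_{T}$ rigorously yields $M'(T)=\tfrac12 m_{2}(T)$ on both sides of $\pi^{2}$. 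This dispenses entirely with the paper's term \textcircled{\scriptsize 4}: you only need the second moment (read off from the $z^{-3}$ coefficient of $H_{T}$, which is exactly the paper's term \textcircled{\scriptsize 3}), the reparametrisation $T=8EK-4(1-k^{2})K^{2}$, and the boundary value at $k\to0^{+}$, which you verify correctly collapses to $\tfrac34+\log\pi$. The algebraic verification you defer, namely that the $k$-derivative of the proposed closed form equals $\tfrac12 m_{2}\,dT/dk$, is in fact the same computation the paper carries out later in the proof of Theorem~\ref{PT3} (where $\partial M/\partial m$ and $\partial m/\partial T$ are computed and shown to factor as $\tfrac{\beta^{2}}{24}(m^{2}K\beta-2m+4)$, which one checks equals $m_{2}/2$). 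So your alternative route trades the most painful step of the paper's proof for a derivative verification that the paper must do anyway; the net effect is a real simplification of the overall argument. The one thing you should make explicit, if you write it out, is the justification of the envelope step as above, rather than leaving ``differentiating under the minimum'' as a bare assertion.
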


The proof of this proposition is a direct computation. It can be seen as elementary, but we find it difficult enough to deserve a fairly detailed presentation. Moreover, we do not believe that this full expression has appeared in the literature before.
  
\begin{proof}
Let us use the notation $Q(z) = Q_T(z) = \frac{T}{4}z^2$, for all $z \in \C$. To simplify the notations, let us denote $\mu^{*}_{T}$, $\Up{\mu_T^*}$, $\Gp{\mu_T^*}$ respectively by $\mu^{*}$, $U$, $G$. Let us also define
\[g(z)=\int\log(z-y) \; d\mu^{*}(y).\]
With this notation,
\[M(T) = \int (U+2Q) \; d\mu^*\] and we recall that $U=-\Re g$ whereas $G^\prime=g.$

Let us start with the expression of $M$ in the subcritical regime $T\leq \pi^{2}$. In this case, we write
\[ M(T) = \int (U+2Q)\; d\mu^*= \int (U+Q) \; d\mu^* +  \int Q \; d\mu^*.\]
We know that the function $U+Q$ is constant on the support of $\mu^*=\sigma_{1/T}$, which is the interval $[-\frac{2}{\sqrt{T}},\frac{2}{\sqrt{T}}]$. From \eqref{lepotWig}, we know that this constant is $\frac{1}{2}( \log T + 1)$. For the second term, a simple change of variable yields  $\int Q \; d\mu^*=\frac{1}{4},$ from where we get that $M(T) =  \frac{1}{2} \log T + \frac 3 4,$ whenever $ T \le \pi^2.$

Let us now turn to the supercritical regime $T>\pi^{2}$. We will use the notation $J =J(\alpha,\beta)= [-\beta, -\alpha]\cup[\alpha, \beta]$. Let us split $M(T)$ as follows:
$$ M(T) = \int (U+2Q) \; d\mu^* = \underbrace{\int_J (U+Q)\;  d\mu^*}_{\raisebox{-3pt}{\textcircled{\scriptsize 1}}} + \underbrace{\int_{-\alpha}^\alpha Q(t) \; dt}_{\raisebox{-3pt}{\textcircled{\scriptsize 2}}}+ \underbrace{\int Q \; d\mu^*}_{\raisebox{-3pt}{\textcircled{\scriptsize 3}}} +\underbrace{\int_{-\alpha}^\alpha U(t) \; dt}_{\raisebox{-3pt}{\textcircled{\scriptsize 4}}}. $$

We will compute each term separately.\\

{\textbf 1.} The function $U+Q$ is constant almost everywhere on the set $J$ and continuous, so that it is constant everywhere on $J$. It is thus enough to evaluate it at $\beta$. In order to evaluate $U$ at $\beta$, we use \eqref{gmin} and the relation
\[U(\beta)=\lim_{x\downarrow \beta} -\Re g(x)=-\log \frac{\sqrt{\beta^{2}-\alpha^{2}}}{2}-\frac{K}{2\beta}(\alpha^{2}+\beta^{2})+1-K\beta.\]
Thus, 
\begin{equation}
 \label{QB}
 (U+Q)(\beta) = -\frac{1}{2} \log \frac{\beta^2-\alpha^2}{4}- \frac{K\beta}{2}(1+k^2)+1,
\end{equation}
so that 
\begin{equation}
 \label{terme1}
 \int_J (U+Q) \; d\mu^*  = (1-2\alpha)(U+Q)(\beta) = (2k\beta -1)\left(\frac{1}{2} \log \frac{\beta^2-\alpha^2}{4}+ \frac{K\beta}{2}(1+k^2)-1\right).
\end{equation}

{\textbf 2.}  The second term is the easiest: since $Q(z)=\frac{K}{\beta}z^{2}$ and $\alpha=k\beta$, we have
  \begin{equation}
 \label{terme2}
 \int_{-\alpha}^\alpha Q(t) \; dt = \frac{2}{3}K\beta^2k^3.
 \end{equation}
 
{\textbf 3.} To compute the third term, we use the fact that $\int y^2\; d\mu^*(y)$ is the coefficient of $\frac{1}{z^3}$ in the expansion of $G(z)=\int \frac{1}{z-y} \; d\mu^{*}(y)$ near infinity.
 
Recall  from \eqref{Pi} the definition of the elliptic integral of the third kind. By Formula 17.7.6 in \cite{AbraSte64}, we get that for any $z\in [\beta, \infty),$ 
 $$ \Pi\left(\frac{\alpha^2}{z^2};k\right)= K + \frac{\beta z}{\sqrt{(z^2-\beta^2)(z^2-\alpha^2)}} \left(K E\left(\frac \beta z ; k\right)- E F\left(\frac \beta z ; k\right) \right).$$
We can thus rewrite \eqref{Smini} and find, for all $z \in [\beta,\infty)$,
\begin{equation}\label{GT1}
 G(z) = 2E F\left(\frac{\beta}{z};k\right) -2K E\left(\frac{\beta}{z};k\right) - \frac{2K}{\beta}z\left(\sqrt{\left(1-\frac{\alpha^2}{z^2}\right)\left(1-\frac{\beta^2}{z^2}\right)}-1\right).
\end{equation}

One has the following expansions in the vicinity of zero:
 $$ F(u;k) = u + \frac{1+k^2}{6} u^3 + \mathcal O(u^5) \quad \textrm{ and } \quad E(u;k) = u + \frac{1-k^2}{6} u^3 + \mathcal O(u^5).$$
 Thus, 
 $$ G(z) = \frac{1}{z}+ \left(E\frac{\beta^3}{3}(1+k^2) + K\beta^3 \left(\frac{(1+k^2)^2}{4}- \frac{1-k^2}{3}-k^2\right)\right)\frac{1}{z^3}+ o\left(\frac{1}{z^3}\right).$$
so that
  \begin{equation}
 \label{terme3}
 \int Q \; d\mu^*  =  \frac{K\beta}{6} (1+k^2)+ \frac{K^2\beta^2 }{12}(1-k^2)^2.
 \end{equation}

{\textbf 4.} The last term is the most delicate to compute. 

By using the symmetries of the functions $U$ and $g$, and performing integrations by parts, we have
 \begin{eqnarray*}
  \int_{-\alpha}^\alpha U(t) dt & = & - \Re \int_{-\alpha}^\alpha g(t) dt = -2 \Re \int_{0}^\alpha g(t) dt =- 2 \Re \int_{0}^\alpha g_+(t) dt \\
  & =&  2\alpha U(\alpha) + 2 \Re \int_{0}^\alpha t G_+(t) dt =  2\alpha U(\alpha)+ \alpha^2 \Re G_+(\alpha) - \Re  \int_{0}^\alpha t^2 G_+^\prime(t)dt.
 \end{eqnarray*}
We compute $U(\alpha)$ using again the fact that $U+Q$ is constant on $J$:
$$ U(\alpha) = (U+Q)(\beta) - Q(\alpha) = -\frac{1}{2} \log \frac{\beta^2-\alpha^2}{4}- \frac{K\beta}{2}(1+3k^2)+1.$$
We then compute $G_+$ from the \eqref{GT1}: for $x \in [-\alpha, \alpha]$ we find
\begin{equation}\label{Gplus}
G_+(x) = 2E F_-\left(\frac{\beta}{x}; k\right) -2 K E_-\left(\frac{\beta}{x}; k\right) + \frac{2K}{\beta} \left(\frac{\sqrt{(\alpha^2-x^2)(\beta^2-x^2)}}{x}+x\right).
\end{equation}

One can check that, for $x \in [-\alpha, \alpha],$
\begin{eqnarray}
 F_-\left(\frac{\beta}{x}; k\right) & = & K -\int_{1/k}^{\beta/x} \frac{ds}{\sqrt{(s^2-1)(k^2s^2-1)}} + i \int_1^{1/k}  \frac{ds}{\sqrt{(s^2-1)(1-k^2s^2)}}\\
 E_-\left(\frac{\beta}{x}; k\right) & = & E +\int_{1/k}^{\beta/x} \sqrt{\frac{k^2s^2-1}{s^2-1}}ds + i \int_1^{1/k}  \sqrt{\frac{1-k^2s^2}{s^2-1}}ds, 
\end{eqnarray}
so that $\Re G_+(\alpha) = 2Kk.$

Differentiating \eqref{Gplus}, we get that, for all $x \in [-\alpha, \alpha],$
$$ x^2 \Re G_+^\prime(x) = \frac{2K}{\beta}x^2 + \frac{2K}{\beta} \frac{x^4}{\sqrt{(\alpha^2-x^2)(\beta^2-x^2)}} + 2(E-K) \beta \frac{x^2}{\sqrt{(\alpha^2-x^2)(\beta^2-x^2)}}.$$

One can check that 
$$ \int_0^z  \frac{x^2}{\sqrt{(\alpha^2-x^2)(\beta^2-x^2)}}dx = \beta \left(F\left(\frac{z}{\alpha}; k\right)- E\left(\frac{z}{\alpha}; k\right)\right) $$
and, computing the derivative of $x \mapsto x \sqrt{(\alpha^2-x^2)(\beta^2-x^2)}$, that
\begin{align*} \int_0^z  \frac{x^4}{\sqrt{(\alpha^2-x^2)(\beta^2-x^2)}}dx =&\\
&\hspace{-2cm}
\frac{z}{3} \sqrt{(\alpha^2-z^2)(\beta^2-z^2)} + \frac{\beta}{3}(\alpha^2+2\beta^2)F\left(\frac{z}{\alpha}; k\right)- 
\frac{2\beta}{3}(\alpha^2+\beta^2)E\left(\frac{z}{\alpha}; k\right).
\end{align*}

Therefore, using again the relation $2E\beta =1+(1-k^2)K\beta,$ we get 
$$\Re  \int_{0}^\alpha t^2 G_+^\prime(t)dt= \frac{2}{3}K\beta^2 k^3 + \frac{K\beta}{3}(1+k^2)+ \frac{K^2 \beta^2}{6}(1-k^2)^2-\frac{1}{2}.$$

There remains to put all the pieces together, and we get the expected expression for $M(T)$.
\end{proof}

\subsection{The jump of the third derivative} We finally turn to the proof of Theorem \ref{PT3}.

\begin{proof}[Proof of Theorem \ref{PT3}] It follows immediately from Proposition \ref{calculeM} that the function $M$, hence the free energy, is a smooth function of $T$ on $\R_{+}\setminus\{\pi^{2}\}$. We want to check that the function $M^{(3)}$, which is the third derivative of $M$, has a discontinuity  of the first kind at $T = \pi^2.$

It follows from Lemma \ref{MSab} and from the definitions \eqref{defEK} of the complete elliptic integrals that 
$$ \lim_{T \downarrow \pi^2} k(T) = 0, \quad \lim_{T \downarrow \pi^2} K(k(T)) = K(0) = \frac{\pi}{2} \textrm{ and } \lim_{T \downarrow \pi^2} \beta(k(T)) = \frac{2}{\pi}.$$
Therefore,
$$ \lim_{T \downarrow \pi^2} M(T) =   \frac{1}{2} \log(\pi^2)+\frac{3}{4}=M(\pi^{2}),$$
so that $M$ is continuous at $T = \pi^2.$

We will now compute the derivatives of $M$ at $\pi^{2}$ from both sides. It turns out to be convenient to introduce the variable $m = 1-k^2$. Considering $E$ and $K$ as functions of $m,$ one easily gets that, for $m \in (0,1),$
$$ \frac{\partial E}{\partial m} = \frac{1}{2(m-1)} (E-K) \textrm{ and }  \frac{\partial K}{\partial m} = \frac{E}{2m(m-1)} - \frac{K}{2(m-1)}.$$

The relation $2E\beta =1+(1-k^2)K\beta$ can be rewritten $\displaystyle \beta = \frac{1}{2E-mK},$
so that 
$$  \frac{\partial \beta}{\partial m} = -\frac{\beta}{4(m-1)}(1-mK\beta) \textrm{ and }
 \frac{\partial (K\beta)}{\partial m} = \frac{(1-mK\beta)^2}{4m(m-1)}.$$
 
 The function $M$ itself can be rewritten as
 $$ M= -\log \beta -\frac{1}{2} \log m - \frac{2}{3} K\beta(2-m) -\frac{1}{12} m^2K^2\beta^2,$$
and we find that the derivative $\frac{\partial M}{\partial m}$  factorises very nicely under the form
 $$ \frac{\partial M}{\partial m} = \frac{(1-mK\beta)(1+mK\beta)}{24m(m-1)}(m^2K\beta-2m+4).$$
 Moreover, from the relation $\beta T= 4K,$ one gets that
 $$  \frac{\partial m}{\partial T} = \frac{m(m-1)\beta^2}{(1-mK\beta)(1+mK\beta)},$$
 so that we finally get, that for any $T>\pi^2,$
 $$ M^\prime(T) =\frac{d M}{d T}= \frac{\beta^2}{24}(m^2K\beta-2m+4).$$
 This formula is exactly the one predicted by Douglas and Kazakov when they introduced the model (see Equation (35) in \cite{DouKaz93}).

 From this expression of $M'(T)$, one can check that 
 $$  \lim_{T \downarrow \pi^2} M^\prime(T) = \frac{1}{2\pi^2},$$
 so that $M$ is continuously derivable at $\pi^2$. We keep the notation $M'$ for the first derivative of $M$ with respect to $T$.
 
 From the same formul{\ae}  as above, one gets that 
 $$  \frac{\partial M^\prime}{\partial m}= -\frac{m\beta^2}{32(m-1)}(1-mK\beta)(1+mK\beta), $$
 so that $$ M^{\prime\prime}(T) =\frac{d^{2} M}{d T^{2}}=  -\frac{m^2\beta^4}{32}.$$
 This formula yields
 $$  \lim_{T \downarrow \pi^2} M^{\prime\prime}(T) = -\frac{1}{2\pi^4},$$
 so that $M$ is twice continuously derivable at $\pi^2.$\\
 
Then
$$ M^{(3)}(T) =\frac{d^{3}M}{dT^{3}}= \frac{m^2 \beta^6}{32} \left(\frac{m}{1+mK\beta} -\frac{2(m-1)}{(1-mK\beta)(1+mK\beta)}\right).$$
By L'H\^opital's rule, we find 
$$  \lim_{T \downarrow \pi^2} M^{(3)}(T) = \frac{3}{\pi^6},  \ \textrm{ whereas }\quad  \lim_{T \uparrow \pi^2} M^{(3)}(T) = \frac{1}{\pi^6},$$
so that $ M^{(3)}$ has a discontinuity  of the first kind at $T = \pi^2.$

Since the free energy is an affine function of $T$ minus the function $M$, we find the announced limits on the left and on the right for the third derivatives of $F$.
\end{proof}

\bibliographystyle{acm}
\bibliography{BibEsaim}

\end{document}